\newcommand{\eps}{\varepsilon}
\newcommand{\R}{\mathbb{R}}
\newcommand{\Q}{\mathbb{Q}}
\newcommand{\C}{\mathbb{C}}
\newcommand{\Z}{\mathbb{Z}}
\newcommand{\es}[1]{\begin{equation}\begin{split}#1\end{split}\end{equation}}
\newcommand{\est}[1]{\begin{equation*}\begin{split}#1\end{split}\end{equation*}}
\newcommand{\as}[1]{\begin{align}#1\end{align}}
\newcommand{\astr}[1]{\begin{align*}#1\end{align*}}
\newcommand{\B}{\mathrm{B}}
\newcommand{\tn}[1]{\textnormal{#1}}
\renewcommand{\mod}[1]{~\pr{\textnormal{mod}~#1}}
\newtheorem*{theo*}{Theorem}
\newtheorem{theo}{Theorem}
\newtheorem{lemma}{Lemma}
\newtheorem{corol}[lemma]{Corollary}
\newtheorem{remark}{Remark}
\newtheorem*{rem*}{Remark}
\def\sumstar{\operatornamewithlimits{\sum\nolimits^*}}
\def\sumdagger{\operatornamewithlimits{\sum\nolimits^\dagger}}
\newcommand{\pr}[1]{\left( #1\right)}
\newcommand{\prst}[1]{( #1)}
\newcommand{\prbig}[1]{\big( #1\big)}
\newcommand{\prBig}[1]{\Big( #1\Big)}
\newcommand{\prbigg}[1]{\bigg( #1\bigg)}
\newcommand{\pg}[1]{\left\{ #1\right\}}
\newcommand{\pmd}[1]{\left| #1\right|}
\newcommand{\sgn}{\operatorname{sgn}}
\newcommand{\e}[1]{\operatorname{e}\pr{ #1}}
\newcommand{\cc}{\operatorname{c}}
\newcommand{\dks}{\operatorname{s}}
\newcommand{\balpha}{\boldsymbol\alpha}
\newcommand{\bbeta}{\boldsymbol\beta}
\newcommand{\bv}{\boldsymbol{v}}
\newcommand{\bdu}{\boldsymbol{du}}
\newcommand{\bcu}{\boldsymbol{c_u}}
\newcommand{\bdv}{\boldsymbol{dv}}
\newcommand{\df}{\mathrm{d}}
\newcommand{\comment}[1]{}
\let\originalleft\left
\let\originalright\right
\renewcommand{\left}{\mathopen{}\mathclose\bgroup\originalleft}
\renewcommand{\right}{\aftergroup\egroup\originalright}
\numberwithin{equation}{section}
\newcommand{\subjclass}[2][2010]{%
  \let\@oldtitle\@title%
  \gdef\@title{\@oldtitle\footnotetext{#1 \emph{Mathematics subject classification.} #2}}%
}
\newcommand{\keywords}[1]{%
  \let\@@oldtitle\@title%
  \gdef\@title{\@@oldtitle\footnotetext{\emph{Key words and phrases.} #1.}}%
}
\newcommand{\addresses}{{
  \bigskip
  \footnotesize

  S.~Bettin, \textsc{Dipartimento di Matematica, Universit\`a di Genova; via Dodecaneso 35; 16146 Genova; Italy. 
}\par\nopagebreak
  \textit{E-mail address}: \texttt{bettin@dima.unige.it}

 }}
\begin{document}

\author{Sandro Bettin}
\title{High moments of the Estermann function}
\date{}
\subjclass[2010]{11M06, 11M41, 11A55 (primary), 11N75, 11N75 (secondary)}
\keywords{Estermann function, Dirichlet L-functions, divisor function, continued fractions, mean values, moments}

\maketitle

\begin{abstract}
For $a/q\in\Q$ the Estermann function is defined as $D(s,a/q):=\sum_{n\geq1}d(n)n^{-s}\operatorname{e}(n\frac aq)$ if $\Re(s)>1$ and by meromorphic continuation otherwise. For $q$ prime, we compute the moments of $D(s,a/q)$ at the central point $s=1/2$, when averaging over $1\leq a<q$.

As a consequence we deduce the asymptotic for the iterated moment of Dirichlet $L$-functions $\sum_{\chi_1,\dots,\chi_k\mod q}|L(\frac12,\chi_1)|^2\cdots |L(\frac12,\chi_k)|^2|L(\frac12,\chi_1\cdots \chi_k)|^2$, obtaining a power saving error term.

Also, we compute the moments of certain functions defined in terms of continued fractions. For example, writing $f_{\pm}(a/q):=\sum_{j=0}^r (\pm1)^jb_j$ where $[0;b_0,\dots,b_r]$ is the continued fraction expansion of $a/q$ we prove that for $k\geq2$  and $q$ primes one has $\sum_{a=1}^{q-1}f_{\pm}(a/q)^k\sim2 \frac{\zeta(k)^2}{\zeta(2k)}  q^k$  as $q\to\infty$.

\end{abstract}

\section{Introduction} 

Since the pioneering work of Hardy and Littlewood~\cite{HL}, the study of moments of families of L-functions has gained a central role in number theory. This is mostly due their numerous applications on, e.g., non-vanishing (see~\cite{IS},~\cite{Sou00}) and sub-convexity estimates (cf.~\cite{CI}). Moreover, moments are also important as they highlight clearly the symmetry of each family.

In this paper we consider the moments of the Estermann function at the central point and, as a consequence, we obtain new results for moments of Dirichlet $L$-functions. We will describe the Estermann function in Section~\ref{mest}, we now focus on the the family of Dirichlet $L$-functions. For this family only the second and fourth moments have been computed. The asymptotic for the second moment was obtained by Paley~\cite{Pal}, whereas Heath-Brown~\cite{H-B81} considered the fourth moment and showed
\es{\label{vasc}
\frac1{\varphi^*(q)}\sumstar_{\chi\mod q}|L(1/2,\chi)|^{4}\sim
\frac{1}{2\pi^2}\prod_{p|q}\frac{(1-1/p)^3}{1+1/p}(\log q)^{ 4},
}
provided that $q$ doesn't have ``too many prime divisors'', a restriction that was later removed by Soundararajan~\cite{Sou07}. As usual, $\sum^*$ indicates that the sum is restricted to primitive characters and $\varphi^*(q)$ denotes the number of such characters. The problem of computing the full asymptotic expansion for the fourth moment was later solved by Young~\cite{You11b} in the case when $q$ is prime. He proved
\es{\label{aafy}
\frac1{\varphi^*(q)}\sumstar_{\chi\mod q}|L(1/2,\chi)|^{4}=
\sum_{i=0}^4c_i(\log q)^i+O\pr{q^{-\frac{5}{512}+\eps}}
}
for some absolute constants $c_i$ with $c_4=(2\pi^2)^{-1}$. Recently, Blomer, Fouvry, Kowalski, Michel and Mili\'cevi\'c~\cite{BFKMM} introduced several improvements in Young's work improving the error term in~\eqref{aafy} to $O(q^{-\frac{1}{32}+\eps})$.

In this paper, we consider a variation of this problem and compute the asymptotic of 
\es{\label{dfmkk}
M_{k}(q)
& =\frac1{\varphi^*(q)^{k-1}}\sumstar_{\substack{\chi_1,\dots,\chi_{k-1}\mod q}}|L(1/2,\chi_1)|^2\cdots |L(1/2,\chi_{k-1})|^2|L(1/2,\chi_1\cdots\chi_{k-1})|^2,
}
where the sum has the extra restriction that $\chi_1\cdots\chi_{k-1}$ is primitive. If $k=2$, this coincides with the usual $4$-th moment of Dirichlet $L$-functions as computed by Young, whereas if $k>2$ then $M_k(q)$ should be thought of as an iterated $4$-th moment, since each character appears $4$ times in the above expression. We shall prove the following theorem.
\begin{theo}\label{1tp}
Let $k\geq3$ and let $q$ be prime. Then, there exists an absolute constant $A>0$ such that 
\est{
M_{k}(q)=\sum_{n=1}^{\infty}\frac{2^{\nu(n)}}{n^{\frac k2}} \pr{(\log\tfrac q{8n\pi })^{k}+(-\pi)^{k}}+O_\eps\pr{k^{Ak}q^{-\delta_k+\eps}},
}
where $\nu(n)$ is the number of different prime factors of $n$, $\delta_k:=\frac{k-2-3 \vartheta}{2 k+5}$ 
with $\vartheta=\frac 7{64}$ being the best bound towards Selberg's eigenvalue conjecture. Also, the implicit constant depends on $\eps$ only. 
\end{theo}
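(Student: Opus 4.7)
The plan is to reduce $M_k(q)$ to a constrained $k$-th moment of the Estermann function $D(1/2, a/q)$ over $a\in(\Z/q\Z)^*$, and then invoke the Estermann moment theorem that constitutes the main technical result of the paper. The bridge between the two sides is the Gauss-sum identity $\chi(n)\tau(\bar\chi)=\sum_{a}\bar\chi(a)\e{na/q}$ for primitive $\chi$ modulo a prime $q$; combined with a smoothed approximate functional equation applied to $|L(1/2,\chi)|^2=L(1/2,\chi)L(1/2,\bar\chi)$, it yields a representation of the schematic shape
\[
|L(\tfrac12,\chi)|^2\;\approx\;\frac{1}{\sqrt q}\sum_{a=1}^{q-1}\bigl(\chi(a)+\eta(\chi)\,\chi(-a)\bigr)\,D^{\sharp}\!\left(\tfrac12,\tfrac aq\right),
\]
where $D^{\sharp}$ is $D$ with a weight localising $n\ll q$, and $\eta(\chi)=\pm1$ comes from the root number. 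Substituting this identity into~\eqref{dfmkk} for each of the $k$ factors $|L(1/2,\chi_i)|^2$ (using $\chi_k=\chi_1\cdots\chi_{k-1}$) and applying orthogonality of the $\chi_i$ collapses the character sum to the multiplicative constraint $a_1\cdots a_k\equiv\pm1\pmod q$ on the residue variables.

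I detect this multiplicative constraint with Dirichlet characters $\psi$ modulo $q$, which gives
\[
\sum_{\substack{a_1,\ldots,a_k\in(\Z/q\Z)^*\\ a_1\cdots a_k\equiv1}}\prod_{i=1}^{k} D^{\sharp}\!\left(\tfrac12,\tfrac{a_i}q\right)\;=\;\frac{1}{\varphi(q)}\sum_{\psi\bmod q}\Bigl(\sum_{a\in(\Z/q\Z)^*}\psi(a)\,D^{\sharp}\!\left(\tfrac12,\tfrac aq\right)\Bigr)^{k}.
\]
The principal character $\psi=\psi_0$ contributes the main term: the inner sum is the untwisted first moment of $D(1/2,a/q)$ over $(\Z/q\Z)^*$, whose asymptotic is supplied by the paper's untwisted Estermann moment theorem. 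Raising this asymptotic to the $k$-th power and using the generating identity $\sum_{n\ge1}2^{\nu(n)}n^{-s}=\zeta(s)^2/\zeta(2s)$ produces precisely the sum $\sum_n 2^{\nu(n)}n^{-k/2}\bigl((\log\tfrac{q}{8n\pi})^{k}+(-\pi)^{k}\bigr)$ of the theorem, the two terms in the bracket reflecting the diagonal and the ``functional-equation dual'' contributions inside $D^{\sharp}$. Each non-trivial twist $\psi\neq\psi_0$ is bounded by the twisted version of the Estermann moment estimate, which goes through Kuznetsov's formula applied to the resulting shifted divisor convolutions and uses Selberg's bound $\vartheta=7/64$; this is the source of the exponent $\delta_k=(k-2-3\vartheta)/(2k+5)$.

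The main obstacle is the uniformity in $k$. Each twisted moment produces a loss that degrades polynomially with $k$ because expanding $D^{\sharp}$ into a divisor sum creates a $k$-fold shifted convolution whose effective length grows with $k$, and the approximate functional equation introduces $k$ smoothing integrals whose Mellin--Barnes contours must be arranged compatibly. Ensuring that these costs multiply to no worse than $k^{Ak}$ (rather than something like $k^{k^2}$), while still preserving the full power saving $q^{-\delta_k+\eps}$, is the most delicate bookkeeping of the argument. A minor technical subtlety is the primitivity constraint on $\chi_1\cdots\chi_{k-1}$ in~\eqref{dfmkk}: since $q$ is prime, the only imprimitive character is the principal one, and its contribution is a routine elementary correction.
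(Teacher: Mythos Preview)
Your orthogonality step is wrong, and it derails the rest of the outline. After writing $|L(\tfrac12,\chi_i)|^2\approx q^{-1/2}\sum_{a_i}\chi_i(a_i)\,D^\sharp(\tfrac12,a_i/q)$ and substituting into~\eqref{dfmkk} with $\chi_k=\chi_1\cdots\chi_{k-1}$, the character product is
\[
\chi_1(a_1)\cdots\chi_{k-1}(a_{k-1})\cdot(\chi_1\cdots\chi_{k-1})(a_k)=\prod_{i=1}^{k-1}\chi_i(a_ia_k),
\]
so averaging over the $\chi_i$ produces the $k-1$ \emph{separate} congruences $a_ia_k\equiv1\pmod q$, i.e.\ $a_1=\cdots=a_{k-1}=\overline{a_k}$, not the single constraint $a_1\cdots a_k\equiv1$. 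With the correct constraint the sum collapses to one residue variable and (using $M(a,q)=M(\overline a,q)$) one lands directly on $M_k(q)=q^{-k/2}\sum_{a=1}^{q-1}M(a,q)^k$, which is~\eqref{tfm}. There is no occasion to reintroduce an auxiliary character $\psi$, and in particular your ``$\psi=\psi_0$ gives the $k$-th power of the first moment of $D$'' is the wrong object: what is needed is the $k$-th \emph{moment} $\sum_a D(\tfrac12,a/q)^k$, not $(\sum_a D(\tfrac12,a/q))^k$.

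From~\eqref{tfm} the paper proceeds as follows. The identity~\eqref{axe} replaces $M(a,q)$ by $D_{\cos;0,0}(\tfrac12,a/q)+D_{\sin;0,0}(\tfrac12,a/q)$ up to a harmless constant; expanding the $k$-th power binomially reduces Theorem~\ref{1tp} to the mixed moments $\sum_a\prod_i D_{i;0,0}(\tfrac12,a/q)$, which are handled by the paper's main technical result, Theorem~\ref{mtws}. The exponent $\delta_k$ and the spectral input $\vartheta=7/64$ arise inside the proof of Theorem~\ref{mtws}, from bounding the off-diagonal terms of a $k$-variable additive divisor problem via Deshouillers--Iwaniec type estimates for sums of Kloosterman sums (Lemmas~\ref{DIbound2}--\ref{DIbound}); they do not come from bounding ``twisted Estermann moments'' over nontrivial $\psi$ as your outline suggests.
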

\begin{remark}
Notice that $\delta_k$ is a decreasing sequence such that $\delta_k\rightarrow\frac12$ as $k\rightarrow\infty$. For $\vartheta=\frac 7{64}$ the first few values of $\delta_k$ are $\delta_3=\frac {43}{704}$, $\delta_4=\frac{107}{832}$, $\delta_5=\frac{57}{320}$.
\end{remark}
Theorem~\ref{1tp} yields an asymptotic formula for $M_k(q)$ for $k<\eta \frac {\log q}{\log \log q}$ with $\eta>0$ sufficiently small. Larger values of $k$ are easier to deal with and one obtains the following corollary.
\begin{corol}\label{cqa}
Let $q$ be prime. Then as $q\rightarrow\infty$ we have
\es{\label{3c4}
M_{k}(q)&  \sim \frac{ \zeta(\frac k2)^2}{\zeta(k)}(\log (q/8\pi)+\gamma)^k,\\
}
uniformly in $3\leq k=o(q^\frac12\log q)$. Moreover this range is optimal, meaning that~\eqref{3c4} is false if $k\gg q^\frac12\log q$.

\end{corol}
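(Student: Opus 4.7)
My plan is to establish the corollary separately in two ranges of $k$ and then verify optimality. Write $L:=\log(q/(8\pi))$ and $G(s):=\zeta(s)^2/\zeta(2s)=\sum_{n\ge 1}2^{\nu(n)}n^{-s}$, so that the claim reads $M_k(q)\sim G(k/2)(L+\gamma)^k$. For $3\le k\le \eta\log q/\log\log q$ with $\eta>0$ sufficiently small I would invoke Theorem~\ref{1tp}. In this range $k^{Ak}=e^{O(k\log k)}=q^{O(\eta)}$, so the error is $q^{O(\eta)-\delta_k+\eps}=o((\log q)^k)$. It then suffices to simplify the main term
\[
S(q,k):=\sum_{n\ge 1}\frac{2^{\nu(n)}}{n^{k/2}}\bigl[(L-\log n)^k+(-\pi)^k\bigr].
\]
The $(-\pi)^k$ piece contributes $G(k/2)(-\pi)^k=O(\pi^k)$, negligible. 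For the remaining sum I would isolate $n=1$ (which gives $L^k$) and control the tail via the identity $\sum_n 2^{\nu(n)}(\log n)^j/n^{k/2}=(-1)^jG^{(j)}(k/2)$, with $G^{(j)}(k/2)=O((\log 2)^j 2^{-k/2})$ for $k$ not too small. This shows $S(q,k)=G(k/2)L^k(1+o(1))$, and since $k/L=o(1)$, $(L+\gamma)^k=L^k(1+\gamma/L)^k=L^k(1+o(1))$, completing this case.

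For $\eta\log q/\log\log q<k=o(q^{1/2}\log q)$, Theorem~\ref{1tp} is out of reach and I would argue directly. Here $G(k/2)=1+O(2^{-k/2})=1+o(1)$, so the target reduces to $M_k(q)\sim(L+\gamma)^k$. For the lower bound I would use positivity and Paley's asymptotic $\sum^*_\chi|L(1/2,\chi)|^2\sim\varphi^*(q)(L+\gamma)$ applied inductively, removing one character at a time; for the upper bound I would iterate Cauchy--Schwarz, pairing $|L(\chi_1)|^2$ with $|L(\chi_1\cdots\chi_{k-1})|^2$ and applying Young's sharp fourth moment~\eqref{aafy}, then dispatching the remaining factors via the second-moment asymptotic. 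The main obstacle here is to control the correlations introduced by the multiplicative constraint that $\chi_1\cdots\chi_{k-1}$ be primitive, and to show they do not disturb the leading asymptotic uniformly as $k$ approaches the upper end $q^{1/2}\log q$.

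For optimality I would exhibit a lower bound on $M_k(q)$ that exceeds the claim once $k\gg q^{1/2}\log q$. Explicit constructions of characters producing extreme values of $|L(1/2,\chi)|$ (via Soundararajan-type resonator methods, or via Paley-Zygmund applied to high moments of $|L|^2$) yield positive-density sets of characters with $|L(1/2,\chi)|^2$ unusually large; restricting $M_k(q)$ to tuples supported on these sets produces a lower bound that overtakes $(L+\gamma)^k$ exactly when $k$ crosses the $q^{1/2}\log q$ threshold, contradicting~\eqref{3c4}.
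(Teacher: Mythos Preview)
Your treatment of the range $k\le\eta\log q/\log\log q$ via Theorem~\ref{1tp} is the same as the paper's. The genuine gap is in the complementary range and in the optimality argument, where you are missing the key idea.

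For $k\gg\log q/\log\log q$ the paper does \emph{not} work with the character average directly. It passes through the reformulation~\eqref{tfm}, namely $M_k(q)=q^{-k/2}\sum_{a=1}^{q}M(a,q)^k$, and then uses two pointwise facts about the twisted second moment: (i) $M(1,q)=q^{1/2}(\log(q/8\pi)+\gamma)+2\zeta(\tfrac12)^2+O(q^{-1/2})$; (ii) via the continued-fraction expression~\eqref{prr}, one has $|M(a,q)|\le(q/\eta)^{1/2}\log q$ for every $a\neq1$ and any fixed $\eta\in(1,2)$. Combined with the second-moment bound $\sum_a|M(a,q)|^2\ll(\log q)^4$, a short level-set argument gives $\sum_{a\ge2}|M(a,q)|^k=o\bigl(q^{k/2}(\log(q/8\pi)+\gamma)^k\bigr)$ uniformly for $k\gg\log q/\log\log q$. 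Thus the entire $k$-th moment is carried by the single term $a=1$; since $\zeta(k/2)^2/\zeta(k)\to1$ here, the asymptotic follows. Optimality drops out of the same computation: once $k\gg q^{1/2}\log q$, the secondary constant $2\zeta(\tfrac12)^2$ in $M(1,q)$ contributes a factor $\exp\bigl(2\zeta(\tfrac12)^2\,k/(q^{1/2}\log q)\bigr)$ to $q^{-k/2}M(1,q)^k$ which does not tend to~$1$.

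Your proposed route through iterated Cauchy--Schwarz and second/fourth moments cannot reach this. Each Cauchy--Schwarz step loses a constant in the leading coefficient, and after order $k$ steps the loss is exponential in $k$; there is no mechanism to recover the exact constant $(L+\gamma)^k$ uniformly as $k$ approaches $q^{1/2}\log q$. (Note also that fixing $\chi_2,\dots,\chi_{k-1}$ and summing over $\chi_1$ leaves a twisted \emph{fourth} moment, not a second moment, so ``Paley applied inductively'' does not even get the lower bound started.) You flag the uniformity issue yourself, but it is the argument, not a technicality. The optimality proposal via resonators likewise aims at the wrong scale: resonators produce characters with $|L(\tfrac12,\chi)|$ of size $\exp(c\sqrt{\log q/\log\log q})$, which has nothing to do with the threshold $k\asymp q^{1/2}\log q$. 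The phenomenon is not about extremal characters at all; it is that the additive constant $2\zeta(\tfrac12)^2$ in the single dominant term $M(1,q)$ becomes visible at exactly that scale.
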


A moment somewhat similar to~\eqref{dfmkk} was  previously considered by Chinta~\cite{Chi} who used a multiple Dirichlet series approach to compute the asymptotic of the first moment of (roughly) 
\es{\label{vaa}
L(1/2,\chi_{d_1})L(1/2,\chi_{d_2})L(1/2,\chi_{d_1}\chi_{d_2}),
}
where $\chi_{d}$ denotes the quadratic character associated to the extension $\Q(\sqrt{d})$ of $\Q$. We remark there is a big difference between~\eqref{dfmkk} and this case. Indeed, if $\chi_1,\chi_2$ are characters modulo $q$ then so is $\chi_1\chi_2$
, whereas if $d_1,d_2\approx X$ then $\chi_{d_1}\chi_{d_2}$ is typically a character with conductor $\approx X^2$. This means that~\eqref{dfmkk} roughly correspond to an iterated $4$-th moment, whereas the second moment of~\eqref{vaa} roughly correspond to an iterated $6$-th moment of quadratic Dirichlet $L$-functions, and thus it doesn't seem to be attackable with the current technology. (As a comparison, the first moment computed by Chinta roughly correspond to an iterated $3$-rd moment).

\subsection{Twisted moments, the Estermann function, and continued fractions}

A nice feature of Theorem~\ref{1tp} is that it can be essentially rephrased in terms of high moments of other functions appearing naturally in Number Theory. Indeed, the same computations give also the asymptotic for moments of twisted moments of Dirichlet $L$-functions, of the Estermann function, and of certain functions defined in terms of continued fractions. We now briefly describe each of this objects and give the corresponding version of Theorem~\ref{1tp}.

\subsubsection{Moments of twisted moments}\label{mtm}

Several classical methods to investigate the central values of Dirichlet $L$-functions pass through the study of the second moment of $L(s,\chi)$ times a Dirichlet polynomial $P_{\theta}(s,\chi):=\sum_{n\leq q^{\theta}}a_n\cdot\chi(n)n^{-s}$:
\es{\label{adcf}
\frac1{\varphi^*(q)}\sumstar_{\chi \mod q}|L(1/2,\chi)P_{\theta}(1/2,\chi)|^{2}.
} 
For example, Iwaniec and Sarnak proved that $1/3$ of the Dirichlet $L$-functions do not vanish at the central point via proving the asymptotic for such average for $\theta<1/2$ (and choosing $P_{\theta}$ to be a mollifier). Moreover, it is easy to see that if one could extend such asymptotic to all polynomials of length $\theta<1$, then the Lindel\"of hypothesis would follow.

Expanding the square, using the multiplicativity of Dirichlet characters, and renormalizing, one immediately sees that~\eqref{adcf} can be reduced to an average of twisted moments of the form
\est{
M\pr{a,q}:=\frac{q^{\frac12}}{\varphi^*(q)}\sumstar_{\chi \mod q}\pmd{L\pr{\tfrac12,\chi}}^2\chi(a),
}
for $(a,q)=1$. By the orthogonality of Dirichlet characters one can immediately rewrite Theorem~\ref{1tp} (and~\eqref{vasc}) in terms of $M\pr{a,q}$. In particular, one has
\es{\label{tfm}
\sum_{a=1}^qM\pr{a,q}^k&=q^{\frac k2}M_k(q)\sim \begin{cases}
 \frac{ \zeta(\frac k2)^2}{\zeta(k)}(\log (q/8\pi)+\gamma)^k & \tn{if } 3\leq k=o(q^\frac12\log q),\\
 \frac1{2\pi^2}(\log q)^4 &\tn{if } k=2,
\end{cases}
}
as $q\rightarrow\infty$ with $q$ prime.

\subsubsection{Moments of the Estermann function}\label{mest}
For $(a,q)=1$, $q>0$, $\alpha,\beta\in\C$ and $\Re(s)>1-\min(\Re(\alpha),\Re(\beta))$, the Estermann function is defined as 
\es{\label{dfnes}
D_{\alpha,\beta}(s,\alpha, a/q):=\sum_{n=1}^\infty \e{n{a}/q}\frac{\tau_{\alpha,\beta}(n)}{n^s}=D_{\cos;\alpha,\beta}(s, a/q)+iD_{\sin;\alpha,\beta}(s, a/q),
}
where $D_{\cos}$ and $D_{\sin}$ have the same definition of $D$, but with $\e{na/q}$ replaced by $\cos(2\pi na/q)$ and $\sin(2\pi na/q)$ respectively. As usual, $\e{x}:=e^{2\pi i x}$ and $\tau_{\alpha,\beta}(n):=\sum_{d_1d_2=n}d_1^{-\alpha}d_2^{-\beta}$.

$D_{\alpha,\beta}(s, a/q)$ was first introduced (with $\alpha=\beta=0$) by Estermann who proved that it extends to a meromorphic function on $\C$ satisfying a functional equation relating $D_{\alpha,\beta}(s,a/q)$ with $D_{-\alpha,-\beta}(1-s,\pm\overline a/q)$, where $\overline a$ denotes the inverse of $a$ modulo $q$ (and similarly for $D_{\sin}$ and $D_{\cos}$ which satisfy a more symmetric functional equation given by~\eqref{afce} below).  

Since the work of Estermann~\cite{Est30,Est32} on the number of representations of an integer as a sum of two or more products, the Estermann function has proved itself as a valuable tool when studying additive problems of similar flavor (see e.g.~\cite{Mot80,Mot94}) and in problems related to moments of $L$-functions (see e.g.~\cite{H-B79,You11b} and~\cite{CGG}).
These applications mainly use the functional equation for $D$ as it encodes Voronoi's summation in an analytic fashion, allowing for a simpler computation of the main terms. However, the Estermann function is an interesting object by its own right, due to its surprising symmetries (see~\cite{Bet16}) and to the connections with some interesting objects in analytic number theory. For example, by the work of Ishibashi~\cite{Ish} (see also~\cite{BC13b}) one has
\est{
D_{\sin;1,0}(0,a/q)
=\pi \dks(a,q),\qquad D_{\sin;0,0}(0, a/q)&=\tfrac12\cc_0\pr{ a/q},
}
 where $\dks(a,q)$ is the classical Dedekind sum and $\cc_0\pr{ a/q}$ is a cotangent sum, related to the Nymann-Beurling criterion for the Riemann hypothesis, which has been object of intensive studies in recent years (see, for example,~\cite{BC13a,MR,Bet15}). Ishibashi obtained similar identities also for other values of $\alpha,\beta$, and in particular if $\alpha$ is a positive odd integer one obtains that $D_{\sin;\alpha,0}(0,a/q)$ is related to certain Dedekind cotangent sums studied by Beck~\cite{Bec}. All these functions satisfy certain reciprocity relations and provide examples of ``quantum modular forms'' (cf.~\cite{Zag}).

Moreover, one can also obtain formulae relating the Estermann function to twisted moments of Dirichlet $L$-function (see~\cite{Bet16} and \cite{CG}) and in particular for $q$ prime and $(a,q)=1$, one has
\es{\label{axe}
D_{\cos;0,0}(\tfrac12,a/q)+D_{\sin;0,0}(\tfrac 12,a/q)=M(a,q)+\frac{2(q^{\frac12}-1)}{\varphi(q)}\zeta(\tfrac12)^2.
}
By this formula and~\eqref{tfm}, it is clear that Theorem~\ref{1tp} gives an asymptotic formula for the high moments of $D_{\cos;0,0}(1/2,a/q)+D_{\sin;0,0}(1/2,a/q)$. The method however allows one to obtain the asymptotic for the joint moments of $D_{\cos;0,0}(1/2,a/q)$ and $D_{\sin;0,0}(1/2,a/q)$. We shall state this in~Theorem~\ref{mtws} below where shifts are also included (all our results will be derived from this theorem). Here we content ourselves with giving the asymptotic for the high moments of the Estermann function:
\begin{theo}\label{caee}
Let $q$ be prime. Then,  
\est{
\frac1{\varphi(q)}\sum_{a=1}^{q-1}D_{0,0}(\tfrac12,\tfrac aq)^k&\sim q^{\frac k2-1}2^{1-\frac k2}\frac{\zeta(\frac k2)^2}{\zeta(k)}\Re \pr{\pr{e^{\frac {\pi i}4}(\log\tfrac q{8\pi }+\gamma)-e^{-\frac {\pi i}4}\tfrac \pi2}^{k}}\\
}
as $q\rightarrow \infty$, uniformly in $3\leq k=o(q^\frac12\log q)$. In particular, if $3\leq k\ll1$ then
\est{
\frac1{\varphi(q)}\sum_{a=1}^{q-1}D_{0,0}(\tfrac12,\tfrac aq)^k&\sim q^{\frac k2-1}2^{1-\frac k2}\frac{\zeta(\frac k2)^2}{\zeta(k)}\pr{\cos\pr{\tfrac {k \pi }4}( \log q)^k-\frac \pi2 \sin\pr{\tfrac {k \pi }4}( \log q)^{k-1}}
}
as $q\rightarrow \infty$.
\end{theo}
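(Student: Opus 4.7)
The plan is to reduce Theorem~\ref{caee} to the parity-weighted iterated fourth moment handled by Theorem~\ref{mtws}, using the identity~\eqref{axe} as the bridge between the Estermann function and twisted $L$-values. The symmetries $D_{\cos;0,0}(\tfrac12, -a/q) = D_{\cos;0,0}(\tfrac12, a/q)$ and $D_{\sin;0,0}(\tfrac12, -a/q) = -D_{\sin;0,0}(\tfrac12, a/q)$, which follow from the definition~\eqref{dfnes}, allow one to write~\eqref{axe} at both $a$ and $-a$ and add resp.\ subtract to obtain
\est{
D_{0,0}(\tfrac12, a/q) = \tfrac{1+i}{2}\, M(a,q) + \tfrac{1-i}{2}\, M(-a,q) + O(q^{-1/2}).
}

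Next I would expand the $k$-th power as a product of $k$ identical factors and open each $M(\pm a, q)$ as an average of $|L(\tfrac12,\chi)|^2\chi(\pm a)$ over primitive characters $\chi$ modulo $q$. The $j$-th local factor becomes $\tfrac{1+i}{2} + \tfrac{1-i}{2}\chi_j(-1)$, which evaluates to $1$ when $\chi_j$ is even and to $i$ when $\chi_j$ is odd. Summing over $a$ and applying orthogonality, namely $\sum_{a=1}^{q-1}(\chi_1\cdots\chi_k)(a)=\varphi(q)$ if $\chi_1\cdots\chi_k=\chi_0$ and $0$ otherwise, one obtains
\est{
\sum_{a=1}^{q-1} D_{0,0}(\tfrac12, a/q)^k = \frac{q^{k/2}\varphi(q)}{\varphi^*(q)^k}\sumstar_{\substack{\chi_1,\dots,\chi_k\\ \chi_1\cdots\chi_k=\chi_0}} i^{s(\chi)}\prod_{j=1}^k |L(\tfrac12,\chi_j)|^2 + \tn{error},
}
where $s(\chi)$ counts odd characters in the tuple. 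The constraint $\chi_1\cdots\chi_k=\chi_0$ forces $s(\chi)$ to be even, so $i^{s(\chi)}\in\{\pm 1\}$.

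The parity-weighted iterated moment on the right is a special case of the general weighted iterated fourth moment computed by Theorem~\ref{mtws}. Tracking the weight $i^{s(\chi)}$ through the proof of Theorem~\ref{1tp}, each character's main-term contribution splits according to its parity: the even contribution of the form $\log(q/(8n\pi))$ is kept as is, while the odd contribution of the form $-\pi$ is multiplied by $i$. The substitution $a\mapsto -a$ sends $D_{0,0}(\tfrac12,a/q)$ to $\overline{D_{0,0}(\tfrac12,a/q)}$, so $\sum_a D_{0,0}^k$ is real, and the parity contributions symmetrize to give
\est{
\sum_{a=1}^{q-1} D_{0,0}(\tfrac12, a/q)^k \sim q^{k/2}\cdot 2^{1-k/2}\sum_{n=1}^\infty \frac{2^{\nu(n)}}{n^{k/2}}\Re\bigl((e^{i\pi/4}\log\tfrac{q}{8n\pi} - e^{-i\pi/4}\tfrac\pi2)^k\bigr).
}
The $e^{\pm i\pi/4}=(1\pm i)/\sqrt2$ arise as normalized forms of the initial coefficients $\tfrac{1\pm i}{2}$, and the factor $2^{1-k/2}$ collects the resulting powers of $\sqrt 2$. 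Dividing by $\varphi(q)$, using $\sum_n 2^{\nu(n)}/n^{k/2}=\zeta(k/2)^2/\zeta(k)$, and retaining only the two leading terms of the binomial expansion of the $k$-th power recovers the bounded-$k$ simplification.

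\textbf{Main obstacle:} The analytical heart of the argument lies inside Theorem~\ref{mtws}: establishing the parity-weighted (and shifted) iterated fourth moment with a power-saving error term. Granting that, the derivation of Theorem~\ref{caee} is essentially algebraic; the delicate part is tracking how the parity weights $i^{s(\chi)}$ interact with the even/odd main-term split from Theorem~\ref{1tp} and verifying that the result assembles into the stated closed form $\Re((e^{i\pi/4}(\log(q/(8\pi))+\gamma) - e^{-i\pi/4}\pi/2)^k)$ after the $a\mapsto -a$ symmetrization.
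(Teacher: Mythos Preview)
Your approach is correct in spirit but takes an unnecessary detour, and there are two concrete gaps.

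\textbf{The detour.} The paper does not go through~\eqref{axe} and the $L$-function parity weighting at all. It uses the definition~\eqref{dfnes} directly: $D_{0,0}=D_{\cos;0,0}+iD_{\sin;0,0}$, expands the $k$-th power by the binomial theorem, and applies~\eqref{slm} (the unshifted specialization of Theorem~\ref{mtws}) to each mixed moment $\sum_a D_{\cos}^{\,k-r}D_{\sin}^{\,r}$. Your route---pass to $M(\pm a,q)$ via~\eqref{axe}, open as a character average, weight by $i^{s(\chi)}$, then invoke Theorem~\ref{mtws}---is equivalent, but to actually apply Theorem~\ref{mtws} you must translate the parity-weighted $L$-moment back into a linear combination of the $M_{\Upsilon,k}$, which is exactly the binomial expansion the paper starts from. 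So you end up doing the paper's computation plus an extra round trip through $L$-functions.

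\textbf{The main-term description is imprecise.} You write that ``the even contribution of the form $\log(q/(8n\pi))$ is kept as is, while the odd contribution of the form $-\pi$ is multiplied by $i$.'' That is not what~\eqref{slm} gives: the per-factor contribution is $(\log\tfrac{q}{8n\pi}+\gamma-a_i\pi)$ with $a_i=\tfrac12$ for $D_{\cos}$ and $a_i=-\tfrac12$ for $D_{\sin}$. The $i^r$ from $(iD_{\sin})^r$ then combines with the binomial sum (over even $r$, since $M_{\Upsilon,k}=0$ for $|\Upsilon|$ odd) to produce $\tfrac12\bigl(((1+i)L-(1-i)\tfrac\pi2)^k+((1-i)L-(1+i)\tfrac\pi2)^k\bigr)=\Re\bigl((e^{i\pi/4}L-e^{-i\pi/4}\tfrac\pi2)^k\bigr)\cdot 2^{k/2}$ with $L=\log\tfrac{q}{8n\pi}+\gamma$. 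This is where the stated closed form comes from; your sketch of it is not quite right.

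\textbf{The large-$k$ range is missing.} Theorem~\ref{mtws} (and hence your argument) only gives an asymptotic for $k=o(\log q/\log\log q)$, since the error term $k^{Ak}q^{k/2-1-\delta_k+\eps}$ swamps the main term beyond that. The claimed uniformity up to $k=o(q^{1/2}\log q)$ requires a separate elementary argument, exactly as in the proof of Corollary~\ref{cqa}: one uses~\eqref{prr} (or~\eqref{fffb}) to show that for $a\neq\pm1$ one has $|D_{0,0}(\tfrac12,a/q)|\leq (q/\eta)^{1/2}\log q$ for some $\eta>1$, combines this with the second-moment bound to control $\sum_{a\neq\pm1}$, and then checks that the $a=\pm1$ terms alone give the asymptotic. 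You do not mention this step.
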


Theorem~\ref{caee} should be compared with Theorem~1.2 of~\cite{FGKM} which gives the asymptotic for the moments of 
\es{\label{adae}
\sum_{\substack{n\geq1,\\n\equiv a\mod q}}d(n)f(n/X)-\mathcal M_1(X,q),
} where $\mathcal M_1(X,q)$ is a certain main term, $f\in \mathcal C_0^{\infty}(\R^{+})$, and $X=q^2/\Phi(q)$ with $\Phi(x)\rightarrow\infty$ and $\Phi(x)\ll x^\eps$ as $x\rightarrow\infty$ (see also~\cite{LY} the case of sharp cut-offs). For comparison, Theorem~\ref{caee} should be thought of roughly computing the moments of 
\es{\label{adae2}
\sum_{\substack{n\geq1}}d(n)\e{n a/q}f(n/X)-\mathcal M_2(X,q)
}with $X\approx q$ and another main term $\mathcal M_2(X,q)$. Despite some superficial similarities, the two problems are very different in nature (and have very different proofs). For example, a first important difference is that the range of the summation in~\eqref{adae} is twice as long as that of~\eqref{adae2} in the logarithmic scale.

\subsubsection{Moments of certain functions defined in terms of continued fractions}

Finally, we discuss the relation with continued fractions. In~\cite{Bet16} (see also~\cite{You11a}), it was observed that $M(a,q)$, and more generally, $D_{\cos}$ and $D_{\sin}$, can be written in terms of the continued fraction expansion of $a/q$. Indeed, if  $a,q\in\Z_{>0}$ and $[b_0;b_1,\cdots,b_{\kappa},1]$ is the continued fraction expansion of $\frac aq$, then for $q$ prime one has
\es{\label{prr}
M(a,q)=\sum_{\substack{j=1,\\j\tn{ odd}}}^{\kappa}b_j^{\frac12}(\log \tfrac {b_j }{8\pi}+\gamma)-\frac\pi 2\sum_{\substack{j=1,\\j\tn{ even}}}^{\kappa}b_j^{\frac12}+O(\log q).\\
}
It is therefore not surprising that Theorem~\ref{1tp} have an incarnation also in terms of moments for functions of the rationals defined as
\est{
 f_{r,\pm}(a/q):=\sum_{j=1}^\kappa (\pm1)^jb_j^{\frac r2},
}
where $r\in\Z_{\geq1}$.

\begin{theo}\label{tinc}
Le $q$ be prime and let $k,r\in\Z_{\geq1}$ with $3\leq kr =o(\frac{\log q}{\log\log q})$. Then
\est{
\sum_{a=1}^qf_{r,\pm}(a/q)^{k}\sim 2 \frac{\zeta(\frac {kr}2)^2}{\zeta(kr)}  q^{\frac {kr}2}
}
as $q\rightarrow\infty$.
\end{theo}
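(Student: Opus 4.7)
The plan is to derive Theorem \ref{tinc} by combining two ingredients: the continued fraction identity \eqref{prr} and its natural generalization (from \cite{Bet16}) expressing the shifted Estermann function $D_{\alpha,\beta}(s,a/q)$ in terms of sums over partial quotients, together with the moment machinery underlying Theorem \ref{1tp} (i.e.\ Theorem \ref{mtws}) specialized to the appropriate shifts. The main term of $D_{\alpha,\beta}(s,a/q)$ is a linear combination of sums $\sum_j w(b_j)$ whose weights reduce to $b_j^{r/2}$ for suitable $s,\alpha,\beta$; hence the moments of $f_{r,\pm}$ should follow from the same analysis.

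The first step is to reduce the $k$-th moment (for $k\geq 2$) to a first moment. Expanding
\begin{equation*}
f_{r,\pm}(a/q)^k=\sum_{j_1,\dots,j_k=1}^{\kappa}(\pm 1)^{j_1+\cdots+j_k}\prod_{i=1}^{k}b_{j_i}^{r/2},
\end{equation*}
and grouping by the partition of $\{1,\dots,k\}$ induced by equality among the $j_i$'s, the ``fully diagonal'' contribution $j_1=\cdots=j_k=j$ gives $\sum_j b_j^{kr/2}=f_{kr,+}(a/q)$. Any partition with $\ell\geq 2$ blocks involves at least two distinct indices; using the approximate independence of distinct partial quotients (Gauss--Kuzmin) together with the denominator-growth bound $b_j q_{j-1}q_j\leq q$, each such contribution is $O(q^{kr/2-\ell+1+o(1)})$, smaller than the diagonal by a factor of at least $q^{-1+o(1)}$.

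The second step is to evaluate the first moment
\begin{equation*}
\sum_{a=1}^{q-1}\sum_{j=1}^{\kappa(a/q)} b_j^{R/2}\sim 2\,\frac{\zeta(R/2)^2}{\zeta(R)}\,q^{R/2}\qquad(R\geq 3).
\end{equation*}
I would write the left-hand side as $\sum_{n\geq 1}n^{R/2}N(n,q)$, where $N(n,q)$ counts pairs $(a,j)$ with $b_j(a/q)=n$, and express $\sum_{n}N(n,q)n^{-s}$ as a Dirichlet series whose Euler product, after the coprimality relations among successive convergent denominators have been accounted for, matches $\zeta(s)^2/\zeta(2s)$ up to local factors at $q$. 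Shifting a contour past the pole at $s=R/2$ yields the main term $\zeta(R/2)^2/\zeta(R)\,q^{R/2}$; the factor of $2$ comes from the involution $a\mapsto \bar{a}\pmod{q}$ which, for $q$ prime, reverses the continued fraction expansion and thereby doubles the contribution.

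The main obstacle is Step 1: making the off-diagonal bound precise uniformly in the full range $kr=o(\log q/\log\log q)$. This requires sharp joint moment bounds for distinct partial quotients of $a/q$ with $q$ prime, which one obtains via the spectral machinery (Kloosterman sums, Kuznetsov's formula, bounds towards Selberg's eigenvalue conjecture) already developed for Theorem \ref{1tp}. Careful combinatorial bookkeeping on the partitions of $\{1,\dots,k\}$ is also needed to ensure the implicit constants remain polynomially bounded in $k$.
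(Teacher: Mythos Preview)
Your two-step strategy is genuinely different from the paper's route, but both steps contain real gaps.

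In Step 1, the claim that off-diagonal contributions (with $\ell\geq 2$ distinct indices) are $O(q^{kr/2-\ell+1+o(1)})$ is not justified: Gauss--Kuzmin statistics and the bound $b_jq_{j-1}q_j\leq q$ do not by themselves control joint moments of distinct partial quotients summed over $a\pmod q$ with $q$ prime. You acknowledge this is the main obstacle, but as written Step 1 is an outline, not a proof, and the spectral input you invoke (Theorem~\ref{mtws}) is set up for moments of the Estermann function, not directly for correlations of partial quotients at distinct positions.

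In Step 2, the sketch is incorrect in two places. First, the Dirichlet series $\sum_n N(n,q)n^{-s}$ has no evident reason to match $\zeta(s)^2/\zeta(2s)$ up to local factors, and in any case the quantity you want is $\sum_n N(n,q)n^{R/2}$, i.e.\ a value at $s=-R/2$, not a residue at $s=R/2$. Second, the involution $a\mapsto\overline a$ is a bijection on residues, so it cannot ``double the contribution''; the factor $2$ in the theorem has a different origin.

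The paper avoids both difficulties by a direct comparison with the Estermann function. From \eqref{fffb}--\eqref{fffbs}, whenever $\max_j b_j>q/(\log q)^{100}$ one partial quotient dominates and all others are $\ll(\log q)^{100}$; then $f_{r,\pm}(a/q)\approx(\max_jb_j)^{r/2}\approx\bigl(2D_{\cos}(\tfrac12,a/q)/\log q\bigr)^r$, so $f_{r,\pm}(a/q)^k\approx(2D_{\cos})^{kr}/(\log q)^{kr}$. The complementary range $\max_j b_j\leq q/(\log q)^{100}$ is handled by the second-moment bound for $D_{\cos}$. Summing over $a$ and invoking \eqref{slm} (the $kr$-th moment of $D_{\cos}$, a direct consequence of Theorem~\ref{mtws}) gives the asymptotic; the factor $2$ arises as $2^{kr}/2^{kr-1}$ from combining the normalization in \eqref{slm} with the $2^r$ in \eqref{cra}. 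This route bypasses any need for joint moments of distinct partial quotients or an independent first-moment computation.
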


Starting with the work of Heilbronn~\cite{Hei}, who considered the average value of $f_{0,+}$, there have been a very large number of papers computing the mean values of functions defined in terms of the continued fraction expansion. In particular, we cite the works~\cite{Por, Ton} on $f_{0,+}$ and~\cite{YK} where the asymptotic for the first moment of $f_{2,+}$ was given. However, to the knowledge of the author, Theorem~\ref{tinc} is the first result giving asymptotic formulae for $k$-th moments with $k\geq 3$ without exploiting an extra average over $q$ (as in~\cite{Hen,BV}). For $k=2$ the only cases previously known where obtained by Bykovski\u i~\cite{Byk} (considering the second moment of $f_{0,+}$) and by the author~\cite{Bet16} (considering the second moment of a variation of $f_{2,+}$). By combining the techniques employed in~\cite{Bet16} and in this paper it seems possible to extend Theorem~\ref{tinc} to more general functions of similar shape.


%

\subsection{Brief outline of the proof}

The approximate functional equation allows one to express $M_k(q)$ roughly in the form 
\es{\label{tcp}
\sum_{\substack{ \pm n_1\pm_2 \cdots \pm n_k\equiv 0\mod q,\\ n_1\cdots n_k\ll q^{k}}}\frac{d(n_1)\cdots d(n_k)}{n_1^\frac12\cdots n_k^\frac12},
}
so that the problem of estimating $M_k(q)$ reduces to that of computing the asymptotic for this quadratic divisor problem. The diagonal terms (i.e. the terms with $\pm n_1\pm_2 \cdots \pm n_k=0$) are a bit easier to study and give a main term; the main difficulties then lie in obtaining an asymptotic for the off-diagonal terms and in assembling the various main terms. In his proof of~\eqref{aafy}, which corresponds to~\eqref{tcp} with $k=2$, Young used a combination of several techniques each effective for some range of the variables $n_1,n_2$. In particular, when $n_1\approx n_2$ (in the logarithmic scale) he followed an approach \`a la Motohashi~\cite{Mot97} using Kuznetsov formula, whereas when one variable is much larger than the other one, he used (new) estimates for the average value of the divisor function in arithmetic progressions.

Our approach is similar to that of Young, however there are several substantial differences which we will now discuss in some detail. First, the larger number of variables gives us the advantage of having to deal with more ``flexible'' sums enlarging the ranges where the various estimates are effective. For this reason, we can afford to use slightly weaker bounds employing the spectral theory only indirectly, through the bounds of Deshouilliers  and Iwaniec~\cite{DI} (together with Kim and Sarnak's bound for the exceptional eigenvalues~\cite{Kim}). It seems likely that one could use spectral methods in a more direct and efficient way, however the generalization of the methods in~\cite{You11b} (or~\cite{BFKMM}) to the $k\geq3$ case is not straightforward and so we choose a simpler route as this is still sufficient for our purposes. 

The larger number of variables has also a cost. Indeed, it introduces several new complications in the extraction and in the combination of the main terms, a process that requires a rather careful analysis and constitutes the central part of this paper. One of the causes of the complicated shape of the main terms (cf.~\eqref{gcfa}-\eqref{mtmt}) is that with more than two variables the dichotomy ``either one variable is much bigger than the other or the variables have the same size'' doesn't hold for $k>2$ and one has to (implicitly) deal also with cases such as $n_1\approx \cdots\approx n_{k-1}\approx q^{1+1/k}$ and $n_k\approx 1$.

Another difference with Young's work arises when studying the diagonal terms. If $k=2$, then one can handle these terms easily thanks to Ramanujan's formula $\sum_{n\geq1}d(n)^2/n^s=\zeta(s)^4/\zeta(2s)$. If $k\geq3$, we don't have such a nice exact formula, and we are left with the problem of showing that the series
\est{
\sum_{\pm n_1\pm\cdots\pm n_k=0}\frac{d(n_1)\cdots d(n_k)}{(n_1\cdots n_k)^s}
}
can be meromorphically continued past the line $\Re(s)=1-1/k$ which is the boundary of convergence. We shall leave this problem to a different paper,~\cite{Bet}, where with similar (but a bit simpler) techniques we prove that this series admits meromorphic continuation to the region $\Re(s)>1-2/(k+1)$.

Last, we mention a more technical problem. One of the steps in Young's proof requires separating $n_1,n_2$ in expressions of the form $(n_1\pm n_2)^{-z}$ when $\Re(z)\approx 0$. This can be easily obtained by using some classical Mellin formulae; however, whereas the Mellin integral corresponding to $(1+x)^{-z}$ converges absolutely, the Mellin integral corresponding to $(1-x)^{-z}$ converges only conditionally so that the terms containing $(n_1- n_2)^{-z}$ demand some caution. In our case this problem becomes rather more subtle as we need to apply these formulae iteratively in order to handle expressions such as $(n_1\pm \cdots\pm n_k)^{-s}$. We overcome this difficulties by using a modification of the resulting ``iterated'' Mellin formula allowing us to write such expressions in terms of absolutely convergent integrals (see Section~\ref{amell} for the details). 

\subsection{The structure of the paper}\label{aedxw}
The paper is organized as follow. In Section~\ref{dddc} we state Theorem~\ref{mtws}, a more general version of Theorem~\ref{caee} providing the asymptotic for the mixed moments of $D_{\cos}$ and $D_{\sin}$ (as well as allowing for some small shifts). We then use this result to deduce Theorem~\ref{1tp},~\ref{caee} and~\ref{tinc}. In Section~\ref{ek} we give some lemmas on the Estermann function which we shall need later on. It is in this lemmas that the spectral theory comes (indirectly) into play. The proof of Theorem~\ref{mtws} is carried out in Sections~\ref{bmp}-\ref{ttftd}, after introducing some notation in Section~\ref{assumptions}, and constitutes the main body of the paper. Finally, in Section~\ref{amell} we will prove the Mellin formula mentioned at the end of the previous section as well as some technical Lemmas needed in order to use this formula effectively.

\subsubsection*{Acknowledgments} 

The author would like to thank Sary Drappeau, Dimitris Koukoulopoulos and Maksym Radziwi\l\l\, for helpful discussions.

This paper was started while the author was a postdoc at the Centre de Recherche Math\'ematiques (CRM) in Montr\'eal, and was completed during another visit at the same institution. The author is grateful to the CRM for the hospitality and for providing a stimulating working environment.

The work of the author is partially supported by FRA 2015 ``Teoria dei Numeri" and by PRIN ``Number Theory and Arithmetic Geometry". 

\section{Mixed moments of $D_{\cos}$ and $D_{\sin}$ and the deduction of the main theorems}\label{dddc}

Let $k\geq1$, $q$ be a prime and let $\alpha_1,\dots,\alpha_k$, $\beta_1,\dots,\beta_k\in\C$. Then, for any subset $\Upsilon\subseteq\{1,\dots,k\}$ let $M_{\Upsilon,k}$ be the mixed shifted moment
\est{
M_{\Upsilon,k}&:=\frac1{\varphi(q)}\sum_{a=1}^{q-1}\prod_{i=1}^k D_{i;\alpha_i,\beta_i}\pr{\tfrac12,\tfrac aq},
}
where $D_{i;\alpha_i,\beta_i}:=D_{\sin;\alpha_i,\beta_i}$ if $i\in\Upsilon$ and $D_i:=D_{\cos;\alpha_i,\beta_i}$ otherwise. Also, let
\as{
\Gamma_i(s)&:=\begin{cases}
\Gamma(\frac12+s) &\tn{if $i\in\Upsilon$,}\\
\Gamma(s)&\tn{otherwise}.
\end{cases}\label{dfgi}
}
Since $D_{\sin;\alpha_i,\beta_i}(s,-\frac aq)=-D_{\sin;\alpha_i,\beta_i}(s,\frac aq)$, then $M_{\Upsilon,k}$ is identically zero if $|\Upsilon|$ is odd. If $|\Upsilon|$ is even the asymptotic for $M_{\Upsilon,k}$ is given by the following theorem, provided that $k\geq3$ (the corresponding theorem for $k=2$ is essentially implicit in~\cite{You11b}, whereas the case $k=1$ is trivial).
\begin{theo}\label{mtws}
Let $\Upsilon\subseteq\{1,\dots,k\}$ with $|\Upsilon|$ even.
Let $k\geq 3$ and let $q$ be a prime. Let $\balpha=(\alpha_1,\dots,\alpha_k)$, $\bbeta:=(\beta_1,\dots,\beta_k)\in\C^k$ with $|\alpha_i|,|\beta_i|\ll  \frac1{\log q}$, $|\alpha_i|,|\beta_i|\leq 1/10$ and $\alpha_i\neq\beta_i$ for all $i=1,\dots,k$. Then, there exists an absolute constant $A>0$ such that for any $\eps>0$ we have
\es{\label{asvfd}
M_{\Upsilon,k}&= \sum_{\{\alpha_i',\beta_i'\}=\{\alpha_i,\beta_i\}}\mathscr M_{\balpha',\bbeta'}+O_\eps\pr{k^{Ak}q^{\tfrac k2-1-\delta_k+\eps}},\\
}
where $\delta_k:=\frac{k-2-3 \vartheta}{2 k+5}$,
\es{\label{gabcd}
\mathscr M_{\balpha,\bbeta}:=\frac{q^{\frac k2-1}}{2^{k-1}}\frac{ \zeta\Big(\frac k2-\sum_{i=1}^k\alpha_i\Big)\zeta\Big(\frac k2+\sum_{i=1}^k\beta_i\Big)}{\zeta\Big(k-\sum_{i=1}^k(\alpha_i-\beta_i)\Big)}\prod_{i=1}^k\frac{\Gamma_i(\frac14-\frac{\alpha_i}2)}{\Gamma_i(\frac14+\frac{\alpha_i}2)}\pr{\frac q{\pi}}^{-\alpha_i} \zeta(1-\alpha_i+\beta_i)
}
and where the implicit constant in the error term depends on $\varepsilon$ only.
\end{theo}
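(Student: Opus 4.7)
The plan is to follow the overall strategy of Young's treatment of the fourth moment \cite{You11b}, adapted to the $k$-fold setting. First, apply the approximate functional equation to each factor $D_{i;\alpha_i,\beta_i}(\tfrac12,\tfrac aq)$, writing it as a smoothed sum of length roughly $\sqrt q$ plus a ``dual'' sum obtained via the functional equation that interchanges $(\alpha_i,\beta_i)$ and $(-\alpha_i,-\beta_i)$ and introduces a gamma ratio $\Gamma_i(\tfrac14-\tfrac{\alpha_i}2)/\Gamma_i(\tfrac14+\tfrac{\alpha_i}2)$ together with a factor $(q/\pi)^{-\alpha_i}$ matching the shape of $\mathscr M_{\balpha,\bbeta}$. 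Expanding the product over $i=1,\dots,k$ and decomposing according to the $\sin/\cos$ split dictated by $\Upsilon$ produces $2^{2k}$ terms, each of the form
\[
\sum_{n_1,\dots,n_k} \frac{\tau_{\alpha_1',\beta_1'}(n_1)\cdots\tau_{\alpha_k',\beta_k'}(n_k)}{n_1^{1/2}\cdots n_k^{1/2}}\, \e{\tfrac{(\eps_1 n_1+\cdots+\eps_k n_k)a}{q}}\, W(n_1,\dots,n_k),
\]
with sign pattern $\eps_i\in\{\pm1\}$ and a smooth cutoff $W$. Averaging over $a$ and invoking additive orthogonality replaces the additive character by the indicator of $\eps_1 n_1+\cdots+\eps_k n_k\equiv 0\pmod q$.

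Split the resulting sum into a \emph{diagonal} part ($\eps_1 n_1+\cdots+\eps_k n_k=0$) and an \emph{off-diagonal} part (nonzero multiple of $q$). For the diagonal, expanding the divisor functions reduces matters to a specialization of the shifted series $\sum_{\eps_1 n_1+\cdots+\eps_k n_k=0} d(n_1)\cdots d(n_k)/(n_1\cdots n_k)^s$; its meromorphic continuation to $\Re(s)>1-2/(k+1)$, proved in the companion paper \cite{Bet}, is sufficient to shift contours past the boundary $\Re(s)=1-1/k$ and pick up the leading polar contribution. Summed over the $2^k$ admissible choices of $(\alpha_i',\beta_i')$ coming from the approximate functional equations, the residues combine to $\sum_{\{\alpha_i',\beta_i'\}=\{\alpha_i,\beta_i\}} \mathscr M_{\balpha',\bbeta'}$.

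For the off-diagonal part, apply Voronoi summation (equivalently, the Estermann functional equation studied in the lemmas of Section~\ref{ek}) to one of the variables, converting the congruence $\eps_1 n_1+\cdots+\eps_k n_k\equiv 0\pmod q$ into one involving $\overline a\pmod q$ and generating Kloosterman sums after further opening of the divisor function on the transformed variable. Cauchy--Schwarz followed by the bounds of Deshouillers--Iwaniec \cite{DI} for averages of Kloosterman sums, combined with the Kim--Sarnak estimate $\vartheta\leq 7/64$ on exceptional eigenvalues, then gives a power saving of size $q^{-\delta_k}$ with $\delta_k=(k-2-3\vartheta)/(2k+5)$. The presence of $k\geq3$ summation variables provides enough ``flexible'' range for these average bounds to be applied as a black box, avoiding the more delicate direct use of the Kuznetsov trace formula that was needed in Young's $k=2$ setting.

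The principal obstacle is the combinatorial extraction and assembly of the main terms. The $2^k$ sign patterns $\eps_i$, the $2^k$ permutations of each pair $(\alpha_i,\beta_i)$, and the Voronoi-transformed off-diagonal pieces all contribute candidate main terms supported in different regimes — including genuinely intermediate ones such as $n_1\asymp\cdots\asymp n_{k-1}\asymp q^{1+1/k}$ with $n_k\asymp 1$, which have no analogue when $k=2$. To recombine them into the closed form $\mathscr M_{\balpha',\bbeta'}$, one must separate factors of the form $(\eps_1 n_1+\cdots+\eps_k n_k)^{-s}$ via iterated Mellin inversion; but the Mellin transform of $(1-x)^{-z}$ converges only conditionally, so naive iteration breaks down. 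The modified Mellin identity developed in Section~\ref{amell} restores absolute convergence at each step and is the tool that makes this bookkeeping tractable — it is this assembly, rather than the analytic estimation of the off-diagonal, that constitutes the technical core of the proof.
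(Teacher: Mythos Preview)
Your outline captures the broad architecture correctly, but it mislocates where the main term $\sum \mathscr M_{\balpha',\bbeta'}$ actually comes from, and this is not a cosmetic issue.

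The diagonal contribution does \emph{not} evaluate to $\sum \mathscr M_{\balpha',\bbeta'}$. When you shift the contour in the diagonal series using the continuation from \cite{Bet}, the polar part you collect is the quantity $\mathscr D_{\balpha,\bbeta}$ of Lemma~\ref{asaca}: a sum indexed by \emph{proper} subsets $\mathcal I\subsetneq\{1,\dots,k\}$ with $|\mathcal I|>|\mathcal J|+1$, each term involving a further sum over $h,\ell$ of products of Estermann functions (see~\eqref{mtmt}). This is a completely different object from $\mathscr M_{\balpha,\bbeta}$. The genuine main term $\mathscr M$ arises from the off-diagonal: after Voronoi/Ramanujan separation and the residue extraction in Section~\ref{ttctd}, the off-diagonal produces a main term $\mathcal M_{\balpha,\bbeta}(N_1,\dots,N_k)$; summing over the dyadic blocks (Lemma~\ref{smtmt}) yields $-\mathscr D_{\balpha,\bbeta}+\sum\mathscr M_{\balpha',\bbeta'}$, and it is only the \emph{cancellation} of $\mathscr D$ between diagonal and off-diagonal that leaves $\sum\mathscr M$ standing. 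If you executed your plan as written, the diagonal computation would hand you $\mathscr D$, and you would be stuck.

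A second omission is that a single off-diagonal estimate does not cover all ranges. The Deshouillers--Iwaniec input (via Lemma~\ref{DIbound}) handles the ``close to diagonal'' regime $N_{\max}\ll q^{2-2(\vartheta+1)/(k+1)-\delta}$ (Lemma~\ref{ctd}), but is useless when one variable dominates. For the complementary range $N_{\max}\gg q^{4/3+\delta}$ the paper instead uses Poisson summation and Young's bound on short sums of $\e{\ell\overline k/q}$ (Lemma~\ref{YL}, Lemma~\ref{pfofa}). The stated exponent $\delta_k=\tfrac{k-2-3\vartheta}{2k+5}$ is exactly the crossover of these two bounds on the line $N_1\cdots N_k\asymp q^k$; you cannot reach it with either method alone.
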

\begin{remark}
If $\alpha_i=\beta_i$ for some $i=1,\dots k$, then $\mathcal M_{\balpha,\bbeta}$ has to be interpreted as the limit for $\alpha_i\rightarrow\beta_i$ (cf.~\eqref{slm} below).
\end{remark}

As mentioned in Section~\ref{aedxw}, we will prove Theorem~\ref{mtws} in Sections~\ref{bmp}-\ref{ttftd}. We will now deduce Theorem~\ref{1tp},~\ref{caee},~\ref{tinc} from Theorem~\ref{mtws}.

\subsection{Proof of Theorem~\ref{1tp},~\ref{caee} and~\ref{tinc} and of Corollary~\ref{cqa}}
We start by observing that if $|\Upsilon|$ is even then from Theorem~\ref{mtws} one has
\es{\label{slm}
\sum_{a=1}^{q-1}\prod_{i=1}^k D_{i;0,0}\pr{\tfrac12,\tfrac aq}= \frac{q^{\frac k2}}{2^{k-1}}\sum_{n=1}^{\infty}\frac{2^{\nu(n)}}{n^{\frac k2}}\prod_{i=1}^k (\log\tfrac q{8n\pi }+\gamma-a_i\pi)
+O_\eps\pr{k^{Ak}q^{\tfrac k2-\delta_k+\eps}},
}
where $a_i=-\frac12$ if $i\in \Upsilon$ and $a_i=\frac12$ otherwise. Indeed, if $\balpha$ and $\bbeta$ satisfy the hypothesis of Theorem~\ref{mtws}, then by contour integration the main term on the right hand side of~\eqref{asvfd} can be rewritten as
\es{\label{mtma}
\sum_{\{\alpha_i',\beta_i'\}=\{\alpha_i,\beta_i\}}\hspace{-0.9em}\mathscr M_{\balpha^*,\bbeta^*}&=\frac{q^{\frac k2-1}}{2^{k-1}}\frac1{(2\pi i)^k}\oint_{|s_1|=\frac14}\hspace{-0.6cm}\cdots\oint_{|s_k|=\frac14}  \hspace{-0.6cm}\frac{ \zeta\Big(\frac k2+\sum_{i=1}^k(s_i-\alpha_i-\beta_i)\Big)\zeta\Big(\frac k2+\sum_{i=1}^ks_i\Big)}{\zeta\Big(k+\sum_{i=1}^k(2s_i-\alpha_i-\beta_i)\Big)}\\
&\quad\times\prod_{i=1}^k\frac{\Gamma_i(\frac14+\frac{s-\alpha_i-\beta_i}2)}{\Gamma_i(\frac14-\frac{s-\alpha_i-\beta_i}2)}\pr{\frac{q}\pi}^{s_i-\alpha_i-\beta_i} \zeta(1+s_i-\alpha_i)\zeta(1+s_i-\beta_i)ds_i,
}
where the circles are integrated counter-clockwise . Thus, taking the limit for $\balpha,\bbeta\rightarrow0$ and expanding $\zeta(s)^2/\zeta(2s)$ as a Dirichlet series (see~\cite{Tit}, (1.2.8)), we obtain
\est{
\sum_{\{\alpha_i',\beta_i'\}=\{\alpha_i,\beta_i\}}\hspace{-0.9em}\mathscr M_{\balpha',\bbeta'}&= \frac{q^{\frac k2-1}}{2^{k-1}}\sum_{n=1}^{\infty}\frac{2^{\nu(n)}}{n^{\frac k2}}\prod_{i=1}^k \frac1{2\pi i}\oint_{|s_i|=\frac14}\frac{\Gamma_i(\frac14+\frac{s_i}2)}{\Gamma_i(\frac14-\frac{s_i}2)}\pr{\frac{q}{n\pi}}^{s_i} \zeta(1+s_i)^2ds_i\\
&=\frac{q^{\frac k2-1}}{2^{k-1}} \sum_{n=1}^{\infty}\frac{2^{\nu(n)}}{n^{\frac k2}}\prod_{i=1}^k (\log\tfrac q{8n\pi }+\gamma-a_i\pi),
}
by the residue theorem. Equation~\eqref{slm} then follows. 

To prove Theorem~\ref{1tp} we observe that by~\eqref{slm} we have (remember that if $|\Upsilon|$ is odd then $M_{\Upsilon,k}=0$)
\est{
&\sum_{a=1}^{q-1}(D_{\cos;0,0}(\tfrac12,\tfrac aq)+D_{\sin;0,0}(\tfrac12,\tfrac aq))^k= \sum_{r=0}^k\frac{k!}{r!(k-r)!}\frac1{\varphi(q)}\sum_{a=1}^{q-1}D_{\cos;0,0}(\tfrac12,\tfrac aq)^{k-r}D_{\sin;0,0}(\tfrac12,\tfrac aq)^{r}\\
&\hspace{6em}=\frac{q^{\frac k2}}{2^{k-1}}\sum_{n=1}^{\infty}\frac{2^{\nu(n)}}{n^{\frac k2}}\sum_{\substack{r=0,\\ r\tn{ even}}}\frac{k!}{r!(k-r)!} (\log\tfrac q{8n\pi }+\gamma-\tfrac \pi2)^{k-r} (\log\tfrac q{8n\pi }+\gamma +\tfrac \pi2)^{r}+\mathcal E_2\\
&\hspace{6em}=\frac{q^{\frac k2}}{2^{k}}\sum_{n=1}^{\infty}\frac{2^{\nu(n)}}{n^{\frac k2}} ((2\log\tfrac q{8n\pi }+2\gamma)^{k} +(-\pi)^{k})+\mathcal E_2\\
}
for some $\mathcal E_2\ll_\epsilon k^{Ak} q^{\tfrac k2-\delta_k+\eps}$. Thus, using~\eqref{axe} one obtains Theorem~\ref{1tp}.
One easily verifies that as $q\to\infty$
\est{
\sum_{n=1}^{\infty}\frac{2^{\nu(n)}}{n^{\frac k2}} \pr{(\log\tfrac q{8n\pi }+\gamma)^{k}+(-\pi)^{k}}\sim \frac{\zeta( \frac k2)^2}{\zeta(k)}(\log\tfrac q{8\pi }+\gamma)^{k}
}
uniformly in $k\geq3$. 
If $k=o(\frac {\log q}{\log \log q})$ the error term $\mathcal E_2$ is smaller than the above main term and so Corollary~\ref{cqa} follows on this range. 

Now assume $k\gg \frac {\log q}{\log \log q}$. First, we observe that by~\eqref{prr} for $a\neq1$ we have
\es{\label{gbf}
|M(a,q)|\leq (q/\eta)^{\frac12}\log q
}
for any fixed $1<\eta<2$ and $q$ sufficiently large. Indeed, this is obvious if $a=-1$ whereas if $a\neq\pm1$ then $\max_j b_j\leq  (q-1)/2$ and so the above bound follows since  $b_1\cdots b_\kappa\leq q$. 
Furthermore, from the second moment estimate $\sum_{a=1}^q|M(a,q)|^2\ll (\log q)^4$ it follows that for every $C>0$ there are at most $O(q(\log q)^4 /C^2)$ values of $a$ in  $1<a\leq q$ such that $|M(a,q)|\geq C$. Thus, by~\eqref{gbf} we have
\est{
\sum_{a=2}^q M(a,q)^k&\leq \sum_{\substack{2\leq a\leq q,\\ |M(a,q)|<C}}^q M(a,q)^k+\sum_{\substack{2\leq a\leq q,\\ |M(a,q)|\geq C}}^q M(a,q)^k\\
&\ll C^k q+\frac{q^{\frac k2+1} (\log q)^{k+4} }{\eta^{\frac k2}C^2}\ll \frac {q^{\frac k2+\frac 2{k+2}}}{\eta^{\frac k2}}(\log q)^{k+2}
}
for $C= \eta^{-\frac12}q^{\frac12-\frac {1}{k+2}}\log q$.
Note that if $k\gg \frac {\log q}{\log \log q}$, then the error term is $\ll q^{\frac k2}\eta^{-\frac k4}(\log q)^{k}=o(q^{\frac k2}(\log (q/8\pi)+\gamma)^{k})$ as $q\rightarrow \infty$, uniformly in $k$. Finally, we have (cf.~\cite{H-B81})
\est{
M(1,q)=q^{\frac 12}(\log (q/8\pi) +\gamma)+2\zeta(\tfrac12)^2+O(q^{-\frac12})
}
so that
\est{
M(1,q)^k= q^{\frac k2}(\log (q/8\pi) +\gamma)^k\exp\Big(2\zeta(\tfrac12)^2\frac{k}{q^{1/2}\log q}(1+O(1/\log q))\Big)\\
}
for $q$ large enough. Thus, if $\frac {\log q}{\log \log q}\ll k=o(q^{1/2}\log q)$ we have
\est{
M_k(q)&=q^{-k/2}\sum_{a=1}^q M(a,q)^k\sim (\log (q/8\pi) +\gamma)^k\sim\frac{\zeta(\frac k2)^2}{\zeta(k)}(\log (q/8\pi) +\gamma)^k
}
as $q\rightarrow\infty$, whereas this asymptotic is false if $k\gg q^{1/2}\log q$. This concludes the proof of Corollary~\ref{cqa}.

The proof of Theorem~\ref{caee} is analogous to those of Theorem~\ref{1tp} and Corollary~\ref{cqa}, with the difference that in this case we use~\eqref{dfnes} rather than~\eqref{axe}. Indeed for some $\mathcal E_1\ll_\epsilon k^{Ak} q^{\tfrac k2-\delta_k+\eps}$ we have
\est{
&\sum_{a=1}^{q-1}D_{0,0}(\tfrac12,\tfrac aq)^k=\sum_{r=0}^k\frac{k!}{r!(k-r)!}\sum_{a=1}^{q-1}D_{\cos;0,0}(\tfrac12,\tfrac aq)^{k-r}i^rD_{\sin;0,0}(\tfrac12,\tfrac aq)^{r}\\
&=\frac{q^{\frac k2}}{2^{k-1}}\sum_{n=1}^{\infty}\frac{2^{\nu(n)}}{n^{\frac k2}}  \sum_{\substack{r=0,\\ r\tn{ even}}}^k\frac{k!}{r!(k-r)!}(\log\tfrac q{8n\pi }+\gamma-\tfrac \pi2)^{k-r}i^r(\log\tfrac q{8n\pi }+\gamma+\tfrac\pi2)^r+\mathcal E_1\\
&=\frac{q^{\frac k2}}{2^{k}} \sum_{n=1}^{\infty}\frac{2^{\nu(n)}}{n^{\frac k2}}  \pr{((1+i)(\log\tfrac q{8n\pi }+\gamma)-(1-i)\tfrac \pi2)^{k}+((1-i)(\log\tfrac q{8n\pi }+\gamma)-(1+i)\tfrac \pi2)^{k}}+\mathcal E_1\\
&=(q/2)^{\frac k2}\,2\,\Re \pr{\sum_{n=1}^{\infty}\frac{2^{\nu(n)}}{n^{\frac k2}}  \pr{e^{\frac {\pi i}4}(\log\tfrac q{8n\pi }+\gamma)-e^{-\frac {\pi i}4}\tfrac \pi2}^{k}}+\mathcal E_1,\\
&\sim q^{\frac k2}2^{1-\frac k2}\frac{\zeta(k/2)^2}{\zeta(k)}\Re \pr{\pr{e^{\frac {\pi i}4}(\log\tfrac q{8\pi }+\gamma)-e^{-\frac {\pi i}4}\tfrac \pi2}^{k}}
}
as $q\rightarrow\infty$ with $3\leq k=o(\frac{\log q}{\log \log q})$. One then obtains Theorem~\ref{caee} on the range $3\leq k=o(q^\frac 12\log q)$ by proceeding as in the proof of Corollary~\ref{cqa}.

\subsection{Proof of Theorem~\ref{tinc}}
We compute the moments of $f_{r,+}$ only, the case of $f_{r,-}$ being analogous (using~\eqref{fffbs} instead of~\eqref{fffb}).

We start by noticing that Corollary~11 of~\cite{Bet16} gives
\begin{align}
D_{\cos;0,0}(\tfrac12,\tfrac aq)&=\frac12\sum_{j=1}^\kappa b_j^\frac12(\log \tfrac {b_j}{8\pi}+\gamma-\tfrac\pi2)+O(\log q),\label{fffb}\\
D_{\sin;0,0}(\tfrac12,\tfrac aq)&=\frac12\sum_{j=1}^\kappa (-1)^jb_j^\frac12(\log \tfrac{b_j}{8\pi}+\gamma+\tfrac\pi2)+O(\log q),\label{fffbs}
\end{align}
where $[0;b_1,\dots b_\kappa,1]$ is the continued fraction expansion of $a/q$. Moreover, since $b_1\cdots b_j\asymp q$, then if one among $b_1,\dots,b_j$, say $b_{j^*}$, satisfies $b_{j^*}> q/(\log q)^{100} $ (and thus in particular $\log b_{j*}=\log q+O(\log \log q)$), then $b_j\ll (\log q)^{100}$ for $j\neq j^*$. In particular, if $\max_ jb_j>q/(\log q)^{100}$ and $1\leq r=o( \log q/\log\log q)$, then
\begin{align}
f_{r,+}(\tfrac aq)&=\sum_{j=1}^\kappa b_j^\frac{r}2= \max_{j=1,\dots \kappa} b_j^{\frac r2}+O((\log q)^{50r+1})=\frac1{(\log q)^r} \big(\max_{j=1,\dots \kappa} b_j^{\frac 12}\log q\big)^r+O((\log q)^{50r+1})\notag\\
&=\frac1{(\log q)^r} \Big( \Big(\max_{j=1,\dots \kappa} b_j^{\frac 12}(\log \tfrac{b_j}{8\pi}+\gamma-\tfrac\pi2)\Big)(1+O(\log \log q/\log q))\Big)^r+O((\log q)^{50r+1})\notag\\
&=\frac{2^r}{(\log q)^r} D_{\cos}(\tfrac12,\tfrac aq)^r(1+O(r\log \log q/\log q)).\label{cra}
\end{align}
Moreover, from~\eqref{fffb} it follows easily that
\est{
\sum_{j=1}^\kappa b_j^\frac12\leq D_{\cos;0,0}(\tfrac12,\tfrac aq)+B\log q\\
}
for all $a/q$ and some $B>0$. In particular, if $\max_ jb_j\leq q/(\log q)^{100}$ and $q$ large enough, then
\es{\label{arc}
f_{r,+}(\tfrac aq)^k&\leq
 \frac{q^{\frac k2(r-1)}}{(\log q)^{50k(r-1)}}\bigg(\sum_{j=1}^\kappa b_j^\frac12\bigg)^k\leq \frac{q^{\frac k2(r-1)}}{(\log q)^{50k(r-1)}} \big(D_{\cos;0,0}(\tfrac12,\tfrac aq)+B\log q\big)^k\\
 &\ll \frac{q^{\frac {kr}2-1}}{(\log q)^{50k(r-1)+47(k-2)}}  \big(D_{\cos;0,0}(\tfrac12,\tfrac aq)+B\log q\big)^2 
}
for $k\geq 2$, since $\max_ jb_j\leq q/(\log q)^{100}$ implies $|D_{\cos}(\tfrac12,\tfrac aq)|+B\log q\leq q^{\frac12}/(\log q)^{48}$ for $q$ large enough.
Now, we have
\es{\label{rfcrv}
\sum_{a=1}^qf_{r,+}(\tfrac aq)^k=\sum_{\substack{1\leq a<q,\\ \max_ jb_j>q/(\log q)^{100}}}f_{r,+}(\tfrac aq)^k+\sum_{\substack{1\leq a<q,\\ \max_ jb_j\leq q/(\log q)^{100}}}f_{r,+}(\tfrac aq)^k.
}
By~\eqref{arc} the second summand is bounded by
\est{
\sum_{\substack{1\leq a<q,\\ \max_ jb_j\leq q/(\log q)^{100}}}f_{r,+}(a/q)^k&
\ll \frac{q^{\frac {kr}2-1}}{(\log q)^{50k(r-1)+48(k-2)}}  \sum_{1\leq a<q}\big(D_{\cos}(\tfrac12,\tfrac aq)+B\log q\big)^2  \\[-0.5em]
&\ll \frac{q^{\frac {kr}2}}{(\log q)^{50k(r-1)+48(k-2)-4}} \ll \frac{q^{kr/2}}{\log q}
}
for $kr\geq3$ (if $k=1$ one needs to modify slightly the argument, but the final bound still holds).  By~\eqref{cra} the first summand of~\eqref{rfcrv} can be written as
\est{
&\sum_{\substack{1\leq a<q,\\ \max_ jb_j>q/(\log q)^{100}}}^q \frac{ (2D_{\cos;0,0}(\tfrac12,\tfrac aq))^{kr}}{(\log q)^{kr}}(1+O(kr\log \log q/\log q))\\
&\qquad=\sum_{\substack{1\leq a<q} }\frac{ (2D_{\cos;0,0}(\tfrac12,\tfrac aq))^{kr}}{(\log q)^{kr}}(1+O(kr\log \log q/\log q))+O({q^{kr/2}}/{\log q})\\
&\qquad= 2 \frac{\zeta(kr)^2}{\zeta(\frac{kr}2)}  q^{\frac {kr}2}(1+O(kr\log \log q/\log q))
}
by~\eqref{slm} for $3\leq rk=o(\frac {\log q}{\log\log q})$ and where one can complete the sum by proceeding as in the previous computation. Theorem~\ref{tinc} then follows.

\section{The Estermann function and bounds for sums of Kloosterman sums}\label{ek}
In this Section we give some results for the Estermann function and for the periodic zeta-function which will be needed in the proof of Theorem~\ref{mtws}. In particular, in Section~\ref{fes}  we give the functional equation for both these functions, whereas in Section~\ref{afes} we give a version of the approximate functional equation for the Estermann function. Finally, in Section~\ref{eef} we give some estimates for products of the Estermann function and the periodic zeta-function, using the bounds of~\cite{DI} for sums of Kloosterman sums.

\subsection{The functional equations}\label{fes}
We start by giving the functional equation for the Estermann function.
\begin{lemma}\label{properties of Estermann}
For $(a,q)=1$, $q>0$ and $\alpha \in\C$, $D_{\alpha,\beta}(s,\frac aq)-q^{1-\alpha-\beta-2s}\zeta(s+\alpha)\zeta(s+\beta)$
can be extended to an entire function of $s$. Moreover, $D_{\alpha,\beta}(s,\frac aq)$ satisfies the functional equation
\es{\label{feest}
D_{\alpha,\beta}\pr{s,\tfrac aq}&=-\tfrac{2}{q}\pr{\tfrac{q}{2\pi}}^{2-2s-\alpha-\beta}\Gamma\pr{1-s-\alpha}\Gamma\pr{1-s-\beta}\times\\
&\hspace{0.8em}\times\Big(\cos\pr{\tfrac\pi2\pr{2s+\alpha+\beta}}D_{-\alpha,-\beta}\pr{1-s,-\tfrac {\overline a}q}-\cos\tfrac{\pi (\alpha-\beta)}2\, D_{\alpha,\beta}\pr{1-s,\tfrac {\overline a}q}\Big),
}
where, here and in the following, $\overline a$ denotes the multiplicative inverse of $a$ modulo the denominator $q$.
\end{lemma}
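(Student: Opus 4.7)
The plan is to expand $D_{\alpha,\beta}(s,a/q)$ as a finite linear combination of products of Hurwitz zeta functions. Starting from the absolutely convergent series in the half-plane $\Re(s)\gg1$,
\est{
D_{\alpha,\beta}(s,a/q)=\sum_{d_1,d_2\geq1}d_1^{-\alpha-s}d_2^{-\beta-s}\e{d_1d_2a/q},
}
I would split each $d_i$ into residue classes modulo $q$, writing $d_i=qk_i+j_i$ with $j_i\in\{1,\dots,q\}$ and $k_i\geq0$. Because $\e{d_1d_2a/q}=\e{j_1j_2a/q}$ depends only on residues, and $\sum_{k\geq0}(qk+j)^{-s}=q^{-s}\zeta(s,j/q)$ (Hurwitz zeta, with the convention $\zeta(s,1)=\zeta(s)$), summing the inner geometric-type series yields
\est{
D_{\alpha,\beta}(s,a/q)=q^{-\alpha-\beta-2s}\sum_{j_1,j_2=1}^{q}\e{j_1j_2a/q}\,\zeta(\alpha+s,j_1/q)\,\zeta(\beta+s,j_2/q).
}
Since each Hurwitz factor extends meromorphically to $\C$ with only a simple pole of residue $1$ at first argument equal to $1$, this finite expansion provides at once the meromorphic continuation of $D_{\alpha,\beta}(\cdot,a/q)$. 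To prove the first assertion, I would compute the residue at $s=1-\alpha$: only $\zeta(\alpha+s,j_1/q)$ is singular, and the inner sum $\sum_{j_1=1}^{q}\e{j_1j_2a/q}$ equals $q$ if $q\mid j_2$ and vanishes otherwise (since $(a,q)=1$), so only $j_2=q$ contributes and the resulting expression matches exactly the residue of $q^{1-\alpha-\beta-2s}\zeta(s+\alpha)\zeta(s+\beta)$. The pole at $s=1-\beta$ is treated symmetrically, and the coincident case $\alpha=\beta$ follows by analytic continuation in the shifts.

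For the functional equation \eqref{feest}, the plan is to apply the Hurwitz functional equation
\est{
\zeta(1-z,p/q)=\frac{\Gamma(z)}{(2\pi)^{z}}\Big(e^{-i\pi z/2}\,L(p/q,z)+e^{i\pi z/2}\,L(-p/q,z)\Big),\qquad L(x,z):=\sum_{n\geq1}\e{nx}n^{-z},
}
to both Hurwitz factors in the expansion above, with $z=1-\alpha-s$ and $z=1-\beta-s$ respectively. The resulting four cross-products of $L$ functions are then summed over $j_1,j_2$ against $\e{j_1j_2a/q}$. A finite Fourier orthogonality in $j_1$ (using $(a,q)=1$) forces $j_2\equiv\pm n_1\overline{a}\pmod{q}$, and the remaining phase $\e{\pm n_2j_2/q}$ then collapses to $\e{\pm n_1n_2\overline{a}/q}$; each of the four cross-terms thereby reduces, up to gamma and $(2\pi)$ prefactors, to an Estermann function $D_{\pm\alpha,\pm\beta}(1-s,\pm\overline{a}/q)$. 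Pairing the two combinations of sign choices via $e^{iA}+e^{-iA}=2\cos A$ then produces the cosine factors $\cos(\pi(2s+\alpha+\beta)/2)$ and $\cos(\pi(\alpha-\beta)/2)$ appearing in \eqref{feest}.

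The main technical obstacle will be the careful bookkeeping of signs and arguments in the four cross-terms, in particular ensuring that the two surviving Estermann functions carry the correct shift indices and the correct sign of $\overline{a}$ rather than some closely related variant. As sanity checks to constrain this bookkeeping, I would verify the collapse in the special cases $\alpha=\beta=0$, $s=\tfrac12$ (which must recover Estermann's classical functional equation) and $q=1$ (where both sides reduce to the functional equation of $\zeta(s+\alpha)\zeta(s+\beta)$).
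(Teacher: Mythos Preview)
Your approach is correct and coincides with the paper's own proof: the paper also reduces everything to the decomposition
\[
D_{\alpha,\beta}\pr{s,\tfrac aq}=q^{-\alpha-\beta-2s}\sum_{m,n=1}^{q}\e{\tfrac{mna}q}\zeta\pr{s+\alpha,\tfrac mq}\zeta\pr{s+\beta,\tfrac nq}
\]
and then invokes the analytic continuation and functional equation of the Hurwitz zeta function. Your write-up is simply more explicit than the paper's one-line sketch, and the residue computation and sign bookkeeping you outline are exactly what is needed to fill in the details.
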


\begin{proof}
The lemma follows easily from the analytic continuation and the functional equation for the Hurwitz zeta function $\zeta(s,x)$ (see~\cite{Apo}, Theorem 12.8) and from the decomposition
\astr{
D_{\alpha,\beta}\pr{s,\tfrac hk}=&q^{-\alpha-\beta-2s}\sum_{m,n=1}^{q}\e{\tfrac{mn a}q}\zeta\pr{s+\alpha,\tfrac mq}\zeta\pr{s+\beta,\tfrac nq}.\qedhere
}
\end{proof}

\begin{corol}
 Let 
\est{
\Lambda_{c;\alpha,\beta}\pr{s,\tfrac aq}&:=\Gamma\pr{\tfrac {s+\alpha}2}\Gamma\pr{\tfrac {s+\beta }2} \pr{\frac{q}{\pi}}^{s+\frac {\alpha+\beta} 2} D_{c;\alpha,\beta}\pr{s,\tfrac aq},\\
\Lambda_{s;\alpha,\beta}\pr{s,\tfrac aq}&:=\Gamma\pr{\tfrac {1+s+\alpha}2}\Gamma\pr{\tfrac {1+s+\beta }2} \pr{\frac{q}{\pi}}^{s+\frac {\alpha+\beta} 2} D_{s;\alpha,\beta}\pr{s,\tfrac aq}.
}
Then, we have the functional equations
\es{\label{afce}
\Lambda_{c;\alpha,\beta}\pr{s,\tfrac aq}&=\Lambda_{c;-\alpha,-\beta}\pr{1-s,\tfrac {\overline a }q},\qquad
\Lambda_{s;\alpha,\beta}\pr{s,\tfrac aq}=\Lambda_{s;-\alpha,-\beta}\pr{1-s ,\tfrac {\overline a }q}.\\
}
\end{corol}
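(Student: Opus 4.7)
The plan is to deduce both functional equations as direct consequences of Lemma~\ref{properties of Estermann} by symmetrising with respect to $a\mapsto -a$ and then invoking a pair of standard gamma function identities.

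First, I would write
\[D_{\cos;\alpha,\beta}(s,\tfrac aq)=\tfrac12\bigl(D_{\alpha,\beta}(s,\tfrac aq)+D_{\alpha,\beta}(s,-\tfrac aq)\bigr),\qquad D_{\sin;\alpha,\beta}(s,\tfrac aq)=\tfrac1{2i}\bigl(D_{\alpha,\beta}(s,\tfrac aq)-D_{\alpha,\beta}(s,-\tfrac aq)\bigr),\]
and apply Lemma~\ref{properties of Estermann} to each of $D_{\alpha,\beta}(s,\pm a/q)$. Using $\overline{-a}\equiv-\overline a\pmod q$ together with the parities $D_{\cos}(\cdot,-x)=D_{\cos}(\cdot,x)$ and $D_{\sin}(\cdot,-x)=-D_{\sin}(\cdot,x)$, the various arguments $\pm\overline a/q$ appearing on the right combine so as to produce, in the cosine case, an expression involving only $D_{\cos;-\alpha,-\beta}(1-s,\overline a/q)$ with coefficient equal to the \emph{difference} of the two trigonometric factors in Lemma~\ref{properties of Estermann}, and analogously in the sine case involving only $D_{\sin;-\alpha,-\beta}(1-s,\overline a/q)$ with coefficient equal to their \emph{sum}.

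Next, these coefficients factorise via the sum-to-product identities
\begin{align*}
\cos\tfrac{\pi(\alpha-\beta)}{2}-\cos\tfrac{\pi(2s+\alpha+\beta)}{2}&=2\sin\tfrac{\pi(s+\alpha)}{2}\sin\tfrac{\pi(s+\beta)}{2},\\
\cos\tfrac{\pi(\alpha-\beta)}{2}+\cos\tfrac{\pi(2s+\alpha+\beta)}{2}&=2\cos\tfrac{\pi(s+\alpha)}{2}\cos\tfrac{\pi(s+\beta)}{2}.
\end{align*}
Multiplying by the prefactor $\Gamma(1-s-\alpha)\Gamma(1-s-\beta)$ from Lemma~\ref{properties of Estermann} and applying the classical identities (consequences of the reflection and Legendre duplication formulas)
\begin{align*}
\Gamma(1-z)\,\sin\tfrac{\pi z}{2}=\sqrt\pi\,2^{-z}\,\tfrac{\Gamma((1-z)/2)}{\Gamma(z/2)},\qquad \Gamma(1-z)\,\cos\tfrac{\pi z}{2}=\sqrt\pi\,2^{-z}\,\tfrac{\Gamma((2-z)/2)}{\Gamma((1+z)/2)},
\end{align*}
once at $z=s+\alpha$ and once at $z=s+\beta$, the resulting products collapse to clean quotients of gamma functions at half-integer arguments: one obtains $\Gamma(\tfrac{1-s-\alpha}{2})\Gamma(\tfrac{1-s-\beta}{2})/\bigl(\Gamma(\tfrac{s+\alpha}{2})\Gamma(\tfrac{s+\beta}{2})\bigr)$ in the cosine case, matching the factors present in the definition of $\Lambda_c$, and the corresponding quotient with $\Gamma(\tfrac{2-s-\alpha}{2})\Gamma(\tfrac{2-s-\beta}{2})/\bigl(\Gamma(\tfrac{1+s+\alpha}{2})\Gamma(\tfrac{1+s+\beta}{2})\bigr)$ in the sine case, matching the shifts in $\Lambda_s$.

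Finally, the powers of $2$, $\pi$ and $q$ from $\tfrac2q(q/(2\pi))^{2-2s-\alpha-\beta}$ combine with the $\pi\cdot 2^{-2s-\alpha-\beta}$ produced by the two gamma identities to yield a pure power $(q/\pi)^{1-2s-(\alpha+\beta)}$, which, upon multiplication by the $(q/\pi)^{s+(\alpha+\beta)/2}$ coming from the LHS definition of $\Lambda_{c;\alpha,\beta}$ (respectively $\Lambda_{s;\alpha,\beta}$), produces exactly the $(q/\pi)^{1-s-(\alpha+\beta)/2}$ appearing in $\Lambda_{c;-\alpha,-\beta}(1-s,\overline a/q)$ (respectively $\Lambda_{s;-\alpha,-\beta}(1-s,\overline a/q)$). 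The main bookkeeping obstacle is matching the cosine-gamma versus sine-gamma identities (which produce shifts $z/2$ vs.\ $(z+1)/2$) to the definitions of $\Lambda_c$ vs.\ $\Lambda_s$ respectively; the conceptual content of the computation is that the two trigonometric coefficients in Lemma~\ref{properties of Estermann} are calibrated precisely so that symmetrisation in $a$ converts the pair of terms into a single symmetric completed functional equation.
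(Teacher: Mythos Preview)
Your proof is correct and follows exactly the approach the paper indicates: deduce the functional equations for $D_{\cos}$ and $D_{\sin}$ from the functional equation~\eqref{feest} for $D_{\alpha,\beta}$ by symmetrising in $a$, then repackage the resulting $\Gamma\cdot\text{trig}$ products using the reflection and duplication formulas. The paper's published proof is a single sentence referring to exactly these ingredients, and your write-up simply makes the computation explicit.
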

\begin{proof}
These functional equations follow  from~\eqref{feest}, using the reflection and the duplication formulas for the $\Gamma$-function.
\comment{ 
We prove the functional equation for $\Lambda_c$, the other case is analogous. We have
\est{
D_{c}\pr{\tfrac12+s,\alpha,\tfrac aq}
&=\frac{2}q\pr{\frac{q}{2\pi}}^{1+\alpha -2s}\Gamma(\tfrac12-s)\Gamma(\tfrac12+\alpha -s)\pr{\cos\pr{\frac{\pi \alpha }{2}}+\sin\pr{\frac{\pi }{2}(2s-\alpha )}}D_c\pr{\tfrac12-s,-\alpha ,\tfrac {\overline a}q}\\
}
and so
\est{
&D_{c}\pr{\tfrac12+s+\tfrac \alpha 2,\alpha,\tfrac aq}
=\frac{2}q\pr{\frac{q}{2\pi}}^{1-2s}\Gamma(\tfrac12-s-\tfrac \alpha 2)\Gamma(\tfrac12-s+\tfrac \alpha 2)\pr{\cos\pr{\frac{\pi \alpha }{2}}+\sin\pr{\pi s}}D_c\pr{\tfrac12-s-\tfrac \alpha 2,-\alpha ,\tfrac {\overline a}q}\\
&=\frac{4}q\pr{\frac{q}{2\pi}}^{1-2s}\Gamma(\tfrac12-s-\tfrac \alpha 2)\Gamma(\tfrac12-s+\tfrac \alpha 2)\sin\pr{\tfrac \pi2\pr{\tfrac{1}{2}+s+\tfrac{\pi \alpha }{2}}}\sin\pr{\tfrac \pi2\pr{\tfrac{1}{2}+s-\tfrac{\pi \alpha }{2}}}D_c\pr{\tfrac12-s-\tfrac \alpha 2,-\alpha ,\tfrac {\overline a}q}\\
&=\frac{4}q\pr{\frac{q}{2\pi}}^{1-2s}\Gamma(\tfrac12-s-\tfrac \alpha 2)\Gamma(\tfrac12-s+\tfrac \alpha 2)\cos\pr{\tfrac \pi2\pr{\tfrac{1}{2}-s-\tfrac{\pi \alpha }{2}}}\cos\pr{\tfrac \pi2\pr{\tfrac{1}{2}-s+\tfrac{\pi \alpha }{2}}}D_c\pr{\tfrac12-s-\tfrac \alpha 2,-\alpha ,\tfrac {\overline a}q}\\
}
Now,
\est{
\Gamma(s-\tfrac \alpha 2)\Gamma(s+\tfrac \alpha 2)=\pi^{-1}2^{2s-2} \Gamma\pr{\tfrac s2-\tfrac \alpha 4} \Gamma\pr{\tfrac s2-\tfrac \alpha 4 + \tfrac{1}{2}} \Gamma\pr{\tfrac s2+\tfrac \alpha 4} \Gamma\pr{\tfrac s2+\tfrac \alpha 4 + \tfrac{1}{2}} 
}
and
\est{
\Gamma\pr{ \tfrac12+\tfrac s2-\tfrac \alpha 4}\Gamma\pr{\tfrac12-\tfrac s2+\tfrac \alpha 4}=\frac{\pi}{\cos(\pi(\tfrac s2-\tfrac \alpha 4))}
}
Thus,
\est{
&\cos(\tfrac \pi2( s+\tfrac \alpha 2))\cos(\tfrac \pi2 (s-\tfrac \alpha 2))\Gamma(s-\tfrac \alpha 2)\Gamma(s+\tfrac \alpha 2)\\
&=\pi2^{2s-2} \cos(\tfrac \pi2( s+\tfrac \alpha 2))\cos(\tfrac \pi2 (s-\tfrac \alpha 2))\Gamma\pr{\tfrac s2-\tfrac \alpha 4} \Gamma\pr{\tfrac s2-\tfrac \alpha 4 + \tfrac{1}{2}} \Gamma\pr{\tfrac s2+\tfrac \alpha 4} \Gamma\pr{\tfrac s2+\tfrac \alpha 4 + \tfrac{1}{2}} \\
&=\pi2^{2s-2} \frac{\Gamma\pr{\tfrac s2-\tfrac \alpha 4}  \Gamma\pr{\tfrac s2+\tfrac \alpha 4} }{\Gamma\pr{\tfrac12-\tfrac s2+\tfrac \alpha 4}\Gamma\pr{\tfrac12-\tfrac s2-\tfrac \alpha 4}} \\
}
This implies
\est{
&D_{c}\pr{\tfrac12+s+\tfrac \alpha 2,\alpha,\tfrac aq}\\
&=\frac{4}q\pr{\frac{q}{2\pi}}^{1-2s}\pi2^{2(\frac12-s)-2} \frac{\Gamma\pr{\tfrac {(\frac12-s)}2-\tfrac \alpha 4}  \Gamma\pr{\tfrac {(\frac12-s)}2+\tfrac \alpha 4} }{\Gamma\pr{\tfrac12-\tfrac {(\frac12-s)}2+\tfrac \alpha 4}\Gamma\pr{\tfrac12-\tfrac {(\frac12-s)}2-\tfrac \alpha 4}} D_c\pr{\tfrac12-s-\tfrac \alpha 2,-\alpha ,\tfrac {\overline a}q}\\
&=\pr{\frac{q}{\pi}}^{-2s}\frac{\Gamma\pr{\tfrac {(\frac12-s-\frac \alpha 2)}2}  \Gamma\pr{\tfrac {(\frac12-s+\frac \alpha 2)}2} }{\Gamma\pr{\tfrac {(\frac12+s+\frac \alpha 2)}2}\Gamma\pr{\tfrac {(\frac12+s-\frac \alpha 2)}2}} D_c\pr{\tfrac12-s-\tfrac \alpha 2,-\alpha ,\tfrac {\overline a}q}\\
}
and so
\est{
&\Gamma\pr{\tfrac {z}2}\Gamma\pr{\tfrac {z-\alpha }2} \pr{\frac{q}{\pi}}^{z-\frac \alpha 2} D_{c}\pr{z,\alpha,\tfrac aq}=\pr{\frac{q}{\pi}}^{1-z+\frac \alpha 2}\Gamma\pr{\tfrac {1-z}2}  \Gamma\pr{\tfrac {1-z+\alpha }2} D_c\pr{1-z,-\alpha ,\tfrac {\overline a}q}\\
}
and the lemma follows. 
} 
\end{proof}

We also need the basic properties of the periodic zeta-function which, for $x\in \R$ and $\Re(s)>1$, is defined as
\es{\label{dff}
F(s,x):=\sum_{n=1}^\infty\frac{\e{nx}}{n^s}.
}
Notice that if $x=1$, then $F(s,x)=\zeta(s)$.
\begin{lemma}
Let $h,l\in\Z$ with $(h,\ell)=1$ and $\ell>0$, then $F(s,h/\ell)$ extends to an entire function of $s$ with the exception of a simple pole at $s=1$ if $\ell=1$. Moreover, $F(s,x)$ satisfies the functional equation
\es{\label{fef}
F(1-s,h/\ell)=\ell^{s-1}\sum_{b=1}^\ell\e{{hb}/\ell}\frac{\Gamma(s)}{(2\pi)^s}\pr{e^{-\frac{\pi is}{2}}F(s,b/\ell)+e^{\frac{\pi is}{2}}F(s,-b/\ell)}.
}
Finally, for $\ell\nmid h$ we have
\es{\label{fef12}
F(0,h/\ell)=-\tfrac12+\tfrac i2\cot(\pi h/\ell).
}
\end{lemma}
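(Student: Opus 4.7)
The plan is to reduce all three statements to corresponding facts about the Hurwitz zeta function $\zeta(s,x)=\sum_{n\geq0}(n+x)^{-s}$ via the decomposition
$$
F(s,h/\ell)=\ell^{-s}\sum_{b=1}^{\ell}\e{bh/\ell}\,\zeta(s,b/\ell),
$$
which for $\Re(s)>1$ is obtained by writing $n=b+m\ell$ with $1\leq b\leq\ell$, $m\geq 0$, and then extends by analytic continuation.

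For the meromorphic continuation (first assertion), I would use that each $\zeta(s,b/\ell)$ is meromorphic on $\C$ with a single simple pole at $s=1$ of residue $1$. The residue of $F(s,h/\ell)$ at $s=1$ is then $\ell^{-1}\sum_{b=1}^{\ell}\e{bh/\ell}$, which vanishes exactly when $\ell\nmid h$, i.e.\ when $\ell>1$ (since $(h,\ell)=1$). Hence $F(\cdot,h/\ell)$ is entire unless $\ell=1$, in which case $F(s,1)=\zeta(s)$ and the pole persists.

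For the functional equation, I would plug Hurwitz's formula
$$
\zeta(1-s,b/\ell)=\frac{\Gamma(s)}{(2\pi)^{s}}\Big(e^{-\pi i s/2}F(s,b/\ell)+e^{\pi i s/2}F(s,-b/\ell)\Big)
$$
into the Hurwitz decomposition applied at $1-s$: the factor $\ell^{-(1-s)}=\ell^{s-1}$ comes out, and the remaining sum over $b$ reproduces exactly the right-hand side of \eqref{fef}. This is a one-line substitution, so the real content is Hurwitz's classical identity, which I would simply cite.

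For the evaluation at $s=0$, I would use $\zeta(0,x)=\tfrac12-x$ in the decomposition, so that
$$
F(0,h/\ell)=\sum_{b=1}^{\ell}\e{bh/\ell}\bigl(\tfrac12-\tfrac{b}{\ell}\bigr)=-\frac{1}{\ell}\sum_{b=1}^{\ell}b\,\omega^{b},
$$
with $\omega=\e{h/\ell}$ a nontrivial $\ell$-th root of unity (so $\sum_{b=1}^\ell \omega^b=0$ kills the constant piece). The geometric sum $S=\sum_{b=1}^\ell b\,\omega^b$ is easily handled by the standard trick $(1-\omega)S=\sum_{b=1}^\ell\omega^b-\ell\omega^{\ell+1}=-\ell\omega$, giving $F(0,h/\ell)=\omega/(1-\omega)$. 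Multiplying numerator and denominator by $e^{-\pi i h/\ell}$ and simplifying yields $-\tfrac12+\tfrac{i}{2}\cot(\pi h/\ell)$, as claimed. None of the steps present a real obstacle; the only place where care is needed is to verify that the manipulations with the Hurwitz decomposition are legitimate after analytic continuation, which follows from uniqueness.
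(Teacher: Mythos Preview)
Your argument is correct. The first two parts---the meromorphic continuation via the Hurwitz decomposition and the functional equation via Hurwitz's formula---match the paper's proof essentially line for line.

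For the evaluation of $F(0,h/\ell)$ you take a different route. The paper instead applies the Hurwitz functional equation \eqref{wew} at $s\to1$, uses the Laurent expansion $\zeta(s,x)=\frac1{s-1}-\psi(x)+O(|s-1|)$, and then invokes the digamma reflection identity $\psi(1-x)-\psi(x)=\pi\cot(\pi x)$. Your approach---plugging the elementary value $\zeta(0,x)=\tfrac12-x$ into the decomposition and summing the resulting finite geometric-type series---is more direct and self-contained, avoiding any special-function identities beyond $\zeta(0,x)=\tfrac12-x$. Both land on the same answer; yours is arguably the cleaner computation here, while the paper's method has the virtue of reusing the same functional equation already quoted for~\eqref{fef}.
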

\begin{proof}
If $\Re(s)<0$, then splitting the series defining $F(s,h/\ell)$ according to the congruence class of $n$ modulo $\ell$ we obtain
\est{
F(1-s,h/\ell)=\ell^{s-1}\sum_{b=1}^\ell\e{{hb}/\ell}\zeta(1-s,b/\ell),
}
where $\zeta(s,x)$ the Hurwitz zeta-function. Thus the analytic continuation of $F(s,h/\ell)$ follows from that of $\zeta(s,x)$ since $\zeta(s,x)$ is holomorphic on $\C$ with the exception of a simple pole of residue $1$ at $s=1$. The functional equation~\eqref{fef} 
follows immediately from the identity 
\es{\label{wew}
\zeta(1-s,x)=(2\pi)^{-s}\Gamma(s)(e^{-\frac{\pi is}2}F(s,x)+e^{\frac{\pi is}2}F(s,-x))
} 
(see~\cite{Apo}, Theorem 12.6), whereas~\eqref{fef12} follows from~\eqref{wew}, the Laurent expansion for $\zeta(s,x)$ (cf.
~\cite{WW}, p. 271)
\est{
\zeta(s,x)=\frac1{s-1}-\psi(x)+O(|s-1|),
}
where $\psi(x)$ is the digamma function, and the identity $\psi(1 - x) - \psi(x) = \pi\,\!\cot{ \left ( \pi x \right ) }$.
\end{proof}

\subsection{The approximate functional equation}\label{afes}

Next, we give an approximate functional equation allowing us to express a product of $k$ Estermann functions as a sum of total length about $q^{\frac k2}$. In this Lemma and in the rest of the paper we will often omit to indicate the dependencies on $\Upsilon$. 

\begin{lemma}\label{adwede}
Let $k\geq1$ and $\Upsilon\subseteq\{1,\dots,k\}$. Let $G_{\balpha,\bbeta}(s)$ be an entire function satisfying $G_{\balpha,\bbeta}(-s)=G_{-\balpha,-\bbeta}(s)$,  $G_{\balpha,\bbeta}(0)=1$ and $G_{\balpha,\bbeta}(\frac12-\alpha_i)=G_{\balpha,\bbeta}(\frac12-\beta_i)=0$ for $i=1,\dots,k$ and decaying faster than any power of $s$ on vertical strips. Let
\begin{align}
g_{\balpha,\bbeta}(s)&:=\pi^{-ks}\prod_{j=1}^{k}\frac{\Gamma_i\pr{\tfrac {\frac12+s+\alpha_i}2}\Gamma_i\pr{\tfrac {\frac12+s+\beta_i}2}}{\Gamma_i\pr{\tfrac {\frac12+\alpha_i}2}\Gamma_i\pr{\tfrac {\frac12+\beta_i}2}},\label{dfngs}\\
X_{\balpha,\bbeta}&:=\prod_{j=1}^k\frac{\Gamma_i\pr{\tfrac {\frac12-\alpha_i}2}\Gamma_i\pr{\tfrac {\frac12-\beta_i}2} }{\Gamma_i\pr{\tfrac {\frac12+\alpha_i}2}\Gamma_i\pr{\tfrac {\frac12+\beta_i}2} }\pr{\frac{q}{\pi}}^{- \alpha_i-\beta_i}\notag
\end{align}
and for any $c_s>0$ let
\est{
V_{\balpha,\bbeta}(x):=\frac1{2\pi i}\int_{(c_s)}G_{\balpha,\bbeta}(s)g_{\balpha,\bbeta}(s)x^{-s}\frac{ds}s,
}
where, as usual, $\int_{(c)}\cdot\ ds$ indicates that the integral is taken along the vertical line from $c-i\infty$ to $c+i\infty$.
Then for $a,q\in\Z,$ with $q>1$ and $(a,q)=1$ we have
\es{\label{feqw}
\prod_{i=1}^k D_{i;\alpha_i,\beta_i}(\tfrac12,\tfrac aq)=S_{\balpha,\bbeta}\pr{ a,q}+X_{\balpha,\bbeta}S_{-\balpha,-\bbeta}\pr{\overline a,q},
}
where $\overline a$  is the inverse of $a$ modulo $q$ and
\est{
S_{\balpha,\bbeta}\pr{ a,q}&:=\frac{i^{-|\Upsilon|}}{2^{k}}\sum_{\epsilon=(\pm_11,\dots,\pm_k1)\in\{\pm1\}^{k}}\sum_{n_1,\dots,n_k\geq1}\rho_{\Upsilon}(\epsilon)\frac{\tau_{\alpha_1,\beta_1}(n_1)\cdots \tau_{\alpha_k,\beta_k}(n_k)}{(n_1\cdots n_k)^{\frac12}}\times\\
&\quad\times\e{\frac{a(\pm_1n_1\pm_2\cdots\pm_kn_k)}{q}}V_{\balpha,\bbeta}\pr{\frac{n_1\cdots n_k}{q^k}},
}
with $\rho_{\Upsilon}(\epsilon):=\prod_{i\in\Upsilon} (\pm_i1)$.
\end{lemma}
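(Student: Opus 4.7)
The plan is a standard Mellin transform / functional equation argument. Fix $c>\tfrac12$, so that each Dirichlet series $D_{i;\alpha_i,\beta_i}(\tfrac12+s,a/q)$ converges absolutely on $\Re(s)=c$, and consider
\est{
I:=\frac1{2\pi i}\int_{(c)}\prod_{i=1}^k D_{i;\alpha_i,\beta_i}\pr{\tfrac12+s,\tfrac aq}\,G_{\balpha,\bbeta}(s)\,g_{\balpha,\bbeta}(s)\,q^{ks}\,\frac{ds}{s}.
}
The idea is to evaluate $I$ in two different ways and compare the results.

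First, I would use $D_{\cos}(w,a/q)=\tfrac12\sum_{\epsilon=\pm1}D(w,\epsilon a/q)$ and $D_{\sin}(w,a/q)=\tfrac1{2i}\sum_{\epsilon=\pm1}\epsilon\,D(w,\epsilon a/q)$ to obtain
\est{
\prod_{i=1}^k D_{i;\alpha_i,\beta_i}(\tfrac12+s,\tfrac aq)=\frac{i^{-|\Upsilon|}}{2^k}\sum_{\epsilon\in\{\pm1\}^k}\rho_\Upsilon(\epsilon)\prod_{i=1}^k D_{\alpha_i,\beta_i}\pr{\tfrac12+s,\epsilon_i\tfrac aq}.
}
Expanding each Estermann factor as a Dirichlet series, interchanging the $n$-summation with the integral (justified by absolute convergence at $\Re(s)=c$), and rewriting $q^{ks}(n_1\cdots n_k)^{-s}=(n_1\cdots n_k/q^k)^{-s}$ causes the inner integral to collapse to $V_{\balpha,\bbeta}(n_1\cdots n_k/q^k)$, so that $I=S_{\balpha,\bbeta}(a,q)$.

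Next, I would shift the contour from $(c)$ to $(-c)$. In the strip $-c\leq\Re(s)\leq c$ the only surviving pole of the integrand is the simple pole at $s=0$ coming from the factor $1/s$: the potential poles of $D_{i;\alpha_i,\beta_i}(\tfrac12+s,a/q)$ at $s=\tfrac12-\alpha_i$ and $s=\tfrac12-\beta_i$ (inherited from $\zeta$ via Lemma~\ref{properties of Estermann}) are cancelled by the vanishing of $G_{\balpha,\bbeta}$ at those points, and the super-polynomial decay of $G_{\balpha,\bbeta}$ on vertical strips justifies the shift against the polynomial growth of the remaining factors. Since $G_{\balpha,\bbeta}(0)=g_{\balpha,\bbeta}(0)=1$, the residue at $s=0$ equals $\prod_i D_{i;\alpha_i,\beta_i}(\tfrac12,a/q)$, and hence $I=\prod_i D_{i;\alpha_i,\beta_i}(\tfrac12,a/q)+I_{-c}$, where $I_{-c}$ denotes the integral along $(-c)$.

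Finally, in $I_{-c}$ I would substitute $s\mapsto-s$ (which returns the contour to $(c)$ and produces an overall minus sign from $ds/(-s)$), invoke the symmetry $G_{\balpha,\bbeta}(-s)=G_{-\balpha,-\bbeta}(s)$, and apply the functional equation
\est{
D_{i;\alpha,\beta}\pr{\tfrac12-s,\tfrac aq}=\frac{\Gamma_i\pr{\frac{\frac12+s-\alpha}2}\Gamma_i\pr{\frac{\frac12+s-\beta}2}}{\Gamma_i\pr{\frac{\frac12-s+\alpha}2}\Gamma_i\pr{\frac{\frac12-s+\beta}2}}\pr{\tfrac q\pi}^{2s-\alpha-\beta}D_{i;-\alpha,-\beta}\pr{\tfrac12+s,\tfrac{\overline a}q},
}
which follows from~\eqref{afce} by the specialisation $s\mapsto\tfrac12-s$. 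The main computational step, and the main obstacle, is to verify that the $k$ gamma ratios and $q$-powers introduced by the functional equation combine with $g_{\balpha,\bbeta}(-s)q^{-ks}$ to produce exactly $X_{\balpha,\bbeta}\cdot g_{-\balpha,-\bbeta}(s)q^{ks}$; this identity is precisely what the normalisations of $g$ and $X_{\balpha,\bbeta}$ in the statement are engineered to make true. Once it is verified, the transformed integrand becomes $-X_{\balpha,\bbeta}$ times the analogue of the integrand defining $I$ with $(\balpha,\bbeta,a)$ replaced by $(-\balpha,-\bbeta,\overline a)$, giving $I_{-c}=-X_{\balpha,\bbeta}S_{-\balpha,-\bbeta}(\overline a,q)$. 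Equating the two evaluations of $I$ then yields~\eqref{feqw}.
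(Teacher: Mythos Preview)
Your proof is correct and follows essentially the same route as the paper's: a contour shift from $\Re(s)=c$ to $\Re(s)=-c$, picking up the residue at $s=0$ and using the functional equation~\eqref{afce} on the reflected integral. The only cosmetic difference is that the paper packages everything in terms of the completed functions $\Lambda_{i;\alpha_i,\beta_i}$, for which the functional equation is already symmetric, so the verification you flag as ``the main computational step'' (that the gamma ratios and $q$-powers combine to give $X_{\balpha,\bbeta}\,g_{-\balpha,-\bbeta}(s)q^{ks}$) becomes automatic rather than something to check separately.
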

\begin{proof}
By contour integration and the functional equation, we have
\est{
\prod_{i=1}^k \Lambda_{i;\alpha_i,\beta_i}\pr{\tfrac12,\tfrac aq}&=\frac1{2\pi i}\bigg(\int_{(2)}-\int_{(-2)}\bigg)\prod_{i=1}^k \Lambda_{i;\alpha_i,\beta_i}(\tfrac12+s,\tfrac aq) \cdot G_{\balpha,\bbeta}(s)\frac{ds}s\\
&=\frac1{2\pi i}\int_{(2)}\prod_{i=1}^k \Lambda_{i;\alpha_i,\beta_i}(\tfrac12+s,\tfrac aq) \cdot G_{\balpha,\bbeta}(s)\frac{ds}s\\
&\quad+\frac1{2\pi i}\int_{(2)}\prod_{i=1}^k \Lambda_{i;-\alpha_i,-\beta_i}(\tfrac12,\tfrac {\overline a}q) \cdot G_{-\balpha,-\bbeta}(s)\,\frac{ds}s.\\
}
Now, expanding the Estermann functions into their Dirichlet series, we see that
\est{
&\frac1{2\pi i}\int_{(2)}\prod_{i=1}^k \frac{\Lambda_{i;\alpha_i,\beta_i}\pr{\frac12+s,\frac aq}}{\Gamma_i\pr{\tfrac {\frac12+\alpha_i}2}\Gamma_i\pr{\tfrac {\frac12+\beta_i}2} \pr{\frac{q}{\pi}}^{\frac12+\frac {\alpha_i+\beta_i}2}}\cdot G_{\balpha,\bbeta}(s)\frac{ds}s\\
&\hspace{5em}=\frac{i^{-|\Upsilon|}}{2^{k}}\sum_{n_1,\dots,n_k\in\Z\setminus\{0\}}\sgn(\prod_{i\in\Upsilon}n_i)\frac{\tau_{\alpha_1,\beta_1}(|n_1|)\cdots\tau_{\alpha_k,\beta_k}(|n_k|)}{|n_1\cdots n_k|^{\frac12}}\e{\frac {a(n_1+\cdots+n_k)}q}\\
&\hspace{5em}\quad\times \frac1{2\pi i}\int_{(2)}G_{\balpha,\bbeta}(s)g_{\balpha,\bbeta}(s)\pr{\frac{|n_1\cdots n_k|}{q^{k}}}^{-s}\frac{ds}s
}
and the lemma follows.
\end{proof}

\subsection{Estimates for the Estermann function}\label{eef}
In this section we give two bounds for certain averages of products the Estermann function and the periodic zeta-function. Both bounds depend on estimates for Kloosterman sums, more specifically on Weil's bound and on (a minor modification of) a bound by Deshouilliers and Iwaniec~\cite{DI}. We recall that the classical Kloosterman sum is defined as
\est{
S(m,n;\ell):=\sumstar_{c\mod \ell}\e{\frac{mc+n\overline c}{\ell}}
}
for any $c,m,n\in\Z,c\geq1$, where $\sumstar$ indicates that the sum is over $c\mod \ell$ such that $(c,\ell)=1$. 
Also, we recall that Weil's bound gives
$S(m,n;\ell)\ll d(\ell)(m,n,\ell)^{\frac12}\ell^{\frac12}.$
Using this bound we obtain the following Lemma.

\begin{lemma}\label{DIbound2}
Let $r>0$, $0<\delta<1$, $C\geq2$, $\eta_0\neq0$ and $(\eta_1,\dots,\eta_r)\in\{\pm 1\}^r$. Let $|a|\leq 2C\delta$, $|b|\leq C\delta$ and $|a_j|,|b_j|<\delta$ for $j=1,\dots,r$.
Then, for some $A>0$ we have
\es{\label{shjgk}
&\sum_{\ell\geq1}\frac{1}{\ell^{C+a}}\sumstar_{h\mod \ell}F(1+C(s-\tfrac12)+b,\tfrac{\eta_0h}\ell)\prod_{j=1}^ r D_{a_j,b_j}(\tfrac12,\tfrac {\eta_jh}\ell)\\
&\hspace{21em}\ll_\delta (AC/\delta)^{A(r+C)}(1+|s|)^{A(r+C)}
}
in the strip 
\est{
-\frac12+\frac{r+\frac32}{C+r-\frac12}+8\delta <\Re(s)<\frac12-2\delta.
}
Moreover, the left hand side of~\eqref{shjgk} is meromorphic in the half plane $\Re(s)>-\frac12+\frac{r+\frac32}{C+r-\frac12}+8\delta$ with poles at $s=\tfrac12-a_j$ and $s=\tfrac12-b_j$ for $j=1,\dots,r$ and $s=\frac12-b/C$ and these poles are simple if $a_1,\dots,a_r,b_1,\dots,b_r$ and $b/C$ are all distinct.
\end{lemma}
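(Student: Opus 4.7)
The plan is to pass from the half-plane $\Re(s)\gg 1$, where every Dirichlet series in sight converges absolutely, down to the strip of the lemma by applying the approximate functional equation (Lemma~\ref{adwede}, in the case $\Upsilon=\emptyset$) to the product $\prod_{j=1}^r D_{a_j,b_j}(\tfrac12,\eta_j h/\ell)$ and simultaneously expanding $F(1+C(s-\tfrac12)+b,\eta_0 h/\ell)$ as its absolutely convergent Dirichlet series. After a termwise interchange of all the sums, the inner sum over $h\bmod\ell$ either (i) produces a Ramanujan sum $c_\ell\bigl(\eta_0 n+\sum_{j}\pm_j\eta_j n_j\bigr)$ from the ``short'' part $S(h',\ell)$ of the AFE, or (ii) produces a Kloosterman sum $S(\eta_0 n,\sum_{j}\pm_j\eta_j n_j;\ell)$ from the ``dual'' part $X\cdot S(\overline{h'},\ell)$. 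The smooth cutoff $V$ in the AFE truncates the $n_j$-sums at $n_1\cdots n_r\ll\ell^r$ and decays faster than any polynomial along its Mellin contour, so all the interchanges are legitimate.

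From (i), I would isolate the diagonal terms $\eta_0 n+\sum_j\pm_j\eta_j n_j=0$: their contribution factorises, after Mellin inversion of $V$ and a residue computation in the Mellin variable, into a finite combination of ratios of shifted zeta functions in $s$, $a_j$, $b_j$, $b/C$. I would then check that these ratios produce simple poles in $s$ exactly at $s=\tfrac12-a_j$, $s=\tfrac12-b_j$, and $s=\tfrac12-b/C$, and at no other points of the strip; distinctness of the shifts ensures simplicity. The off-diagonal Ramanujan-sum contributions in (i), bounded by $|c_\ell(m)|\leq(m,\ell)$, are absolutely convergent and belong to the holomorphic remainder. For (ii), Weil's bound $|S(m,n;\ell)|\leq d(\ell)(m,n,\ell)^{1/2}\ell^{1/2}$, combined with the truncation $n_1\cdots n_r\ll\ell^r$ and with the decay of $F$ along vertical Dirichlet-series lines, gives an absolutely convergent $\ell$-sum. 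A short bookkeeping of the exponents shows that this convergence holds precisely in the strip $\Re(s)>-\tfrac12+(r+\tfrac32)/(C+r-\tfrac12)+8\delta$, which matches the claim.

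The growth bound $(AC/\delta)^{A(r+C)}(1+|s|)^{A(r+C)}$ is obtained by tracking explicit constants at each step: Stirling applied to the $r$ products of $\Gamma$-factors in the AFE yields $(AC)^{AC}$-type growth; inverse powers of $\delta$ appear from Mellin contours that must be shifted to lines $\Re(\sigma)=O(\delta)$ to stay close to the shifted-$\zeta$ poles without crossing them; and powers of $1+|s|$ come from the polynomial growth of $F$ on vertical lines and from Stirling applied to the $\Gamma$-factor in the functional equation for $F$.

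The main obstacle will be the extraction and simplification of the main term in (i). After Mellin-inverting $V$ and moving the Mellin contour across the various $\zeta$-poles encountered, one must verify that the accumulated residues assemble themselves into precisely the claimed three-fold pole structure with no extraneous singularities, and that any apparent poles that would cancel between the main term and the Kloosterman-sum remainder do so exactly. Once this delicate residue bookkeeping is in place, Weil's bound for the remainder is a routine unwinding of the length constraints imposed by $V$.
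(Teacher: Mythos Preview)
The paper's proof is completely different from your approach and much more direct. Rather than opening the Estermann functions via an approximate functional equation, the paper treats the whole sum as a function of $s$, localizes dyadically in $\ell$ (writing $H_L(s)$ for the contribution of $L<\ell\leq 2L$), multiplies by the polynomial $K(s)=\prod_{j=1}^r(s-\tfrac12+a_j)(s-\tfrac12+b_j)$ to kill the Estermann poles, and then applies Phragm\'en--Lindel\"of to the entire function $H_L(s)K(s)$ between the lines $\Re(s)=\tfrac12+2\delta$ and $\Re(s)=-\tfrac12-2\delta$. At the right edge a trivial estimate gives $H_LK\ll L^{-C+2+O(\delta)}$; at the left edge one applies the functional equations~\eqref{feest} and~\eqref{fef} to all of $F$ and the $D_{a_j,b_j}$, expands the dual Dirichlet series, sums over $h$ to produce a single Kloosterman sum, and feeds in Weil's bound to get $H_LK\ll L^{r+3/2+O(\delta)}$. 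Linear interpolation between these two exponents in $L$ is exactly what produces the threshold $-\tfrac12+\frac{r+3/2}{C+r-1/2}$; summing over dyadic $L$ finishes the proof. The $\ell=1$ term, which is just $\zeta(1+C(s-\tfrac12)+b)\prod_j\zeta(\tfrac12+s+a_j)\zeta(\tfrac12+s+b_j)$, is treated separately and supplies the pole at $s=\tfrac12-b/C$.

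Your proposal has a genuine gap. You expand $F(1+C(s-\tfrac12)+b,\cdot)$ as a Dirichlet series in $n$, but that series converges only for $\Re(s)>\tfrac12-\Re(b)/C\geq\tfrac12-\delta$, and nothing in your outline explains how to continue below this line: Weil's bound on the Kloosterman part controls the $\ell$-sum but does nothing for the now-divergent $n$-sum. The specific threshold $\frac{r+3/2}{C+r-1/2}$ in the lemma is the fingerprint of a convexity interpolation between two edges, and a one-shot AFE expansion cannot reproduce it without some Phragm\'en--Lindel\"of step. There is also a second issue: the printed statement has a typo---the Estermann functions should be at $\tfrac12+s$, not at $\tfrac12$ (this is what the paper's proof actually treats, and what the application in Section~\ref{mmtt} requires). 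Taken literally at $\tfrac12$, the poles at $s=\tfrac12-a_j$ and $s=\tfrac12-b_j$ that you claim to extract cannot exist, since then only $F$ depends on $s$; your residue computation in the Mellin variable of $V$ is a computation in a variable independent of $s$ and cannot create $s$-poles.
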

\begin{proof}
For $L\geq 1$, let
\est{
H_L(s)&:=\sum_{L<\ell\leq 2L}\frac{1}{\ell^{C+a}}\sumstar_{h\mod \ell}F(1+C(s-\tfrac12)+b,\tfrac{\eta_0h}\ell)\prod_{j=1}^ r D_{a_j,b_j}(\tfrac12+s,\tfrac {\eta_jh}\ell),\\
K(s)&:=\prod_{j=1}^r(s-\tfrac12+a_j)(s-\tfrac12+b_j).
}
Notice that if $\ell\neq1$ and $(h,\ell)=1$ then $F(x,h/\ell)$ is entire and thus so is $H_L(s)K(s)$ for all $L\geq1$. Now, if $\Re(s)=\frac12+2\delta$, then a trivial bound gives
 \es{\label{pl1}
H_L(s)K(s)\ll (1+|s|^{2r})(A/\delta)^{2r+1}L^{-C+2+2\delta C},
}
where, here and in the following, $A$ denotes a sufficiently large positive constant, which might change from line to line.

Next, take $\Re(s)=-\frac12-2\delta$. Then, applying the functional equations~\eqref{feest} and~\eqref{fef} to $D$ and $F$, expanding $D$ and $F$ into their Dirichlet series, and using Stirling's formula in the crude form
\es{\label{cbg}
\Gamma(\sigma+it)&\ll c^{-1} (1+A|\sigma|)^{|\sigma|}(1+|t|)^{\sigma-\frac12}e^{-\frac{\pi}{2}|t|},\qquad \sigma\geq c>0,
}
we see that 
\est{
H_L(s)K(s)&\ll A^{r}C^{AC}(1+|s|)^{A(r+C)}L^{r-1+(5C+6r)\delta}\sum_{L<\ell\leq 2L}\sum_{u=1}^\ell |F(C(\tfrac12-s)-b,u/\ell)|\times\\
&\hspace{1em}\times  \sum_{n_1,\cdots,n_r\in\Z_{\neq 0}}\frac{|\tau{-a_1,-b_1}(|n_1|)\cdots \tau_{-a_r,-b_r}(|n_r|)|}{|n_1^{1+2\delta}\cdots n_r^{1+2\delta}|} |S(\eta_0u, n_1+\cdots+n_k;\ell)|,
}
and thus by Weil's bound we obtain
\es{\label{pl2}
H_L(s)K(s)&\ll (A/\delta)^{2r+5}C^{AC}(1+|s|)^{A(r+C)}L^{r+\frac32+6(r+C)\delta},
}
when $\Re(s)=-\frac12-2\delta$. Thus, by~\eqref{pl1},~\eqref{pl2} and the Phragm\'en-Lindel\"of principle, if $-\frac12-2\delta\leq\Re(s)\leq \frac12+2\delta$
we have
\est{
H_L(s)K(s)&\ll (A/\delta)^{2r+5}C^{AC}(1+|s|)^{A(r+C)}L^{r+\frac32-(C+r-\frac12)(\Re(s)+\frac12)+5\delta (r+C)}.
}
Moreover, if $|s-\frac12|>2\delta$ then $K(s)\gg \delta^{2r}$ and thus, if $-\frac12-2\delta\leq\Re(s)\leq \frac12-2\delta$, we have
\est{
H_L(s)&\ll (A/\delta)^{4r+5}C^{AC}(1+|s|)^{A(r+C)}L^{r+\frac32-(C+r-\frac12)(\Re(s)+\frac12)+5\delta (r+C)}.
}
It follows that if
\es{\label{cfs}
-\frac12+\frac{r+\frac32+6\delta (r+C)}{C+r-\frac12}\leq \Re(s)\leq \frac12-2\delta
}
then  
\est{
&\sum_{\ell>1}\frac{1}{\ell^{C+a}}\sumstar_{h\mod \ell}F(1+C(s-\tfrac12)+b,\tfrac{\eta_0h}\ell)\prod_{j=1}^ r D_{a_j,b_j}(\tfrac12+s,\tfrac {\eta_jh}\ell)\\[-0.3em]
&\hspace{23em}\ll_\delta (A/\delta)^{4r+6}C^{AC}(1+|s|)^{A(r+C)}.
}
Finally, the contribution of the $\ell=1$ term to the left hand side of~\eqref{shjgk} is
\est{
&\zeta(1+C(s-\tfrac12)+b)\prod_{j=1}^ r \zeta(\tfrac12+s+a_j)\zeta(\tfrac12+s+b_j)\ll (A/\delta)^{2r+1}C^{AC}(1+|s|)^{A(r+C)}
}
when $s$ satisfies~\eqref{cfs} and thus~\eqref{shjgk} follows. We conclude by remarking that the above computations also give the meromorphicity of the left hand side of~\eqref{shjgk} on $\Re(s)\geq-\frac12+\frac{r+\frac32+5\delta (r+C)}{C+r-\frac12}$.
\end{proof}

We now states a variation of a bound by Deshouilliers-Iwaniec for sums of Kloosterman sums (cf. Theorem~9 and (1.52) of~\cite{DI}), which is essentially implicit in~\cite{BHM} and~\cite{HWW} (cf. Theorem~1.4 of~\cite{Wat}). 
\begin{lemma}\label{llin}
Let $W$ be a smooth function supported in $[1,2]$ and satisfying $W^{(i)}(x)\ll C^i$ for $i=0,1,2$ and some $C>1$. Let $a_m,b_n\ll 1$ be sequences of complex numbers supported in $[M,2M]$ and $[N,2N]$ respectively. Then, for $q\geq1$ and $\eta\in\{\pm1\}$ we have
\es{\label{Linnik}
\sum_{m,n,\ell\geq1} W\pr{\ell/L}a_m b_nS(qm,\eta n;\ell)\ll_\eps q^{\vartheta+\eps} C^4 (L^{1+\eps}+q^\frac12)MN,
}
where $\vartheta=\frac7{64}$.
\end{lemma}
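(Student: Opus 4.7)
The strategy is to deduce the bound from Kuznetsov's trace formula combined with the spectral large sieve inequalities of Deshouilliers--Iwaniec~\cite{DI} and the Kim--Sarnak bound on the exceptional spectrum of $\mathrm{SL}_2(\mathbb{Z})$ (and its congruence subgroups). The statement differs from Theorem~9 of~\cite{DI} only in the shape of the amplitude: here the inner variable $qm$ introduces an extra level factor, and the sum over moduli $\ell$ is unrestricted (weighted only by the smooth cutoff $W(\ell/L)$); this is precisely the configuration treated in~\cite{BHM,HWW} (cf.\ Theorem~1.4 of~\cite{Wat}).

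First I would smoothly extract the $\ell$-sum by writing
$$
\sum_\ell W(\ell/L)\, S(qm,\eta n;\ell)
= \sum_\ell \widetilde W_L(\ell)\cdot \ell\cdot \frac{S(qm,\eta n;\ell)}{\ell},
$$
with $\widetilde W_L(x):= W(x/L)/x$, and then apply Kuznetsov's formula (the $+$ or $-$ version according to $\eta=\pm1$) to convert $\sum_\ell \widetilde W_L(\ell)\,\ell^{-1}S(qm,\eta n;\ell)$ into a spectral expansion over the discrete Maass and holomorphic cuspidal spectrum plus the Eisenstein continuous spectrum. The hypothesis $W^{(i)}(x)\ll C^i$ for $i\leq 2$ translates, via repeated integration by parts in the Bessel integral transform of $\widetilde W_L$, into rapid decay of the transform beyond spectral parameter $|t|\gg C(1+\sqrt{qmn}/L)^{\varepsilon}$, picking up at most $C^4$ as a total cost.

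Next I would bound the resulting bilinear form in the Fourier coefficients $\rho_j(qm)$, $\rho_j(\eta n)$ using the Deshouilliers--Iwaniec spectral large sieve. For a dyadic block of spectral parameters $|t_j|\asymp T$ with $T\geq 1$ (the tempered range), the large sieve yields
$$
\sum_{|t_j|\asymp T}\Bigl|\sum_m a_m \rho_j(qm)\Bigr|\Bigl|\sum_n b_n\rho_j(\eta n)\Bigr|
\ll_\varepsilon (qMN)^\varepsilon \bigl(T^2+\tfrac{qM}{L}\bigr)^{1/2}\bigl(T^2+\tfrac{N}{L}\bigr)^{1/2}\sqrt{MN},
$$
and summing this over $T\ll C(1+\sqrt{qMN}/L)^\varepsilon$ and inserting the weights gives precisely the shape $C^4(L^{1+\varepsilon}+q^{1/2})MN$, after exploiting the factor $\ell^{-1/2}$ from Kuznetsov and the support $\ell\asymp L$.

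The hard part is the contribution of the exceptional spectrum: those $\lambda_j=\frac14-t_j^2<\frac14$ for which $t_j=i\vartheta_j$ is purely imaginary. For such $j$, the Bessel transform of $\widetilde W_L$ may be as large as $(\sqrt{qmn}/L)^{2\vartheta_j}$, and the coefficient $\rho_j(qm)$ is larger than in the tempered case by a factor $q^{\vartheta_j}$. Here I would invoke the Kim--Sarnak bound $\vartheta_j\leq \vartheta=\frac7{64}$ to control the size of both contributions, yielding a factor $q^{\vartheta+\varepsilon}$; the bilinear structure together with the DI large sieve applied to the exceptional spectrum then produces the same bound $q^{\vartheta+\varepsilon}C^4(L^{1+\varepsilon}+q^{1/2})MN$ announced in~\eqref{Linnik}. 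The continuous spectrum is tempered and handled by standard bounds for the divisor function, yielding no worse than the main tempered contribution. The remaining work is essentially bookkeeping, which is carried out in detail in~\cite{BHM,HWW,Wat}; the argument in the present lemma follows the same template with the weight $W$ adapted to the smoothness condition $W^{(i)}\ll C^i$.
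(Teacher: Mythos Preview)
Your overall strategy---Kuznetsov's formula, the Deshouilliers--Iwaniec large sieve, and the Kim--Sarnak bound, with reference to \cite{DI,BHM,HWW,Wat}---matches the paper's. However, you misidentify the mechanism producing the factor $q^{\vartheta}$.

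Since the moduli $\ell$ are unrestricted, Kuznetsov is applied at level $1$. At level $1$ Selberg's eigenvalue conjecture is a theorem, so there is \emph{no} exceptional spectrum; your paragraph on exceptional eigenvalues is vacuous in this setting, and cannot be the source of $q^{\vartheta}$. (This is exactly why the paper remarks that the factor $(1+C/\sqrt{MN})^{2\vartheta}$ from \cite{BHM} disappears here.) Correspondingly, your tempered estimate as written---feeding the sequence $(a_m)$ at arguments $qm$ of length $qM$ directly into the large sieve---would only produce a factor $q^{1/2}$, not $q^{\vartheta}$.

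The correct source of $q^{\vartheta}$ is Hecke multiplicativity, which is the step the paper singles out explicitly. At level $1$ every cusp form is a Hecke eigenform, so after splitting the sum over $m$ according to $(m,q)$ one can write $\rho_j(qm)=\rho_j(1)\lambda_j(q)\lambda_j(m)$ (for $(m,q)=1$), pull $\lambda_j(q)$ out of the $m$-sum, bound it by $|\lambda_j(q)|\ll q^{\vartheta+\eps}$ via Kim--Sarnak, and \emph{then} apply the large sieve to $\sum_m a_m\lambda_j(m)$ with sequence length $M$ rather than $qM$. This is what yields \eqref{Linnik}; the argument is carried out in detail in \cite{BHM}, Theorem~4.
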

\begin{proof}
After splitting the sum over $m$ according to the common factor of $m$ with $q$, one can repeat the arguments used in the proof of Theorem~9 of~\cite{DI} but using the multiplicativity of Hecke-eigenvalues (which holds since we are in the case of level $1$, for which there are only new-forms) just before applying the spectral large sieve. Kim-Sarnak's bound~\cite{Kim} for Hecke eigenvalues then gives~\eqref{Linnik}.

The above argument was carried out in detail (sorting out also the delicate issues arising when considering a level different from $1$) in~\cite{BHM}, Theorem~4. We remark that the factor $(1+C/\sqrt{MN})^{2\vartheta}$ appearing there can be removed when the level is $D=1$ since Selberg's eigenvalue conjecture is known in the case of $\tn{SL}(2,\Z)$.
\end{proof}
\begin{remark}
Using the variation of the spectral large sieve given by Blomer and Mili\'cevi\'c in Theorem~8 of~\cite{BM}, one obtains a bound which improves upon~\eqref{Linnik} when the parameters are in certain ranges. It is likely that the use of such bound in combination with~\eqref{Linnik} would lead to a better bound for the error term in Theorem~\ref{mtws}. However for simplicity we choose to use~\eqref{Linnik} in all ranges, since this is sufficient for our purposes.
\end{remark}

Using Lemma~\ref{llin} we obtain the following result.
\begin{lemma}\label{DIbound}
For $r\geq 1$, let $ t_0,\cdots,t_r\in \R$, $(\eta_1,\dots,\eta_r)\in\{\pm1\}^{r}$, and let $\eta_0\neq0$. Furthermore, let $|a_j|,|b_j|<\delta$ for $j=1,\dots,r$ and some $0<\delta<1$. Finally, let $L>0$ and let $W(x)$ be a function supported on $[1,2]$ with $W^{(i)}(x)\ll 1$ for $i=0,1,2$. Then, if $w\in\C$ and $\sigma\geq 2\delta$, we have 
\est{
\mathfrak S:=\sum_{\ell\geq1}\frac{W(\ell/L)}{\ell^{1+w}}\sumstar_{h\mod \ell}F(1+\sigma+it_0,\tfrac{\eta_0h}\ell)\prod_{j=1}^rD_{a_j,b_j}(-\sigma+it_j,\tfrac {\eta_jh}\ell)\\[-0.5em]
}
is bounded by
\es{\label{fddca}
\mathfrak S\ll_\delta L^{r(3\sigma+1)-\Re(w)} \frac{A^{r(\sigma+1)}}{\delta^{2r}}K_{r}(\sigma,w,t_1,\dots,t_j)\times
\begin{cases}
 |\eta_0|^{ \vartheta +\delta} & \tn{if $L\geq |\eta_0|^\frac12$,}\\
 |\eta_0|^{\frac16+\frac \vartheta 3+\delta} & \tn{always,}
\end{cases}
}
for some absolute $A>0$ and where 
\est{
K_{r}(s,w,t_1,\dots,t_j):=(1+\sigma)^{2r(2\sigma+1)}(1+|w|)^{4}\prod_{j=1}^{r}(1+|s|+|t_j|)^{1+4\sigma}
}
\end{lemma}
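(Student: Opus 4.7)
The plan is to first reduce each Estermann factor to an absolutely convergent Dirichlet series, then execute orthogonality over $h\!\mod\ell$ to uncover a sum of Kloosterman sums, and finally bound the resulting expression using Lemma~\ref{llin} together with Weil's bound. Since $\sigma\geq 2\delta>0$, the factor $D_{a_j,b_j}(-\sigma+it_j,\eta_j h/\ell)$ lies to the left of the absolute-convergence strip, so I would begin by applying the functional equation~\eqref{feest} to each $D_{a_j,b_j}$. This rewrites each such factor as a combination of two terms of the shape $\ell^{1-a_j-b_j+2\sigma-2it_j}$ times Gamma factors times $D_{-a_j,-b_j}(1+\sigma-it_j,\pm\eta_j\overline h/\ell)$, whose Dirichlet series now converge absolutely. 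The $2^r$ resulting combinations are estimated in parallel, and Stirling's bound~\eqref{cbg} applied to the Gamma factors produces the polynomial growth encoded in $K_r$.

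Next, I would expand $F(1+\sigma+it_0,\eta_0 h/\ell)$ and each transformed $D_{-a_j,-b_j}(1+\sigma-it_j,\pm\eta_j\overline h/\ell)$ into its Dirichlet series, so the inner sum over $h\!\mod\ell$ collapses into a Kloosterman sum of the form $S(\eta_0 n,\sum_j\pm\eta_j m_j;\ell)$, with signs inherited from the functional equation. Introducing a smooth dyadic partition of unity in each of $n,m_1,\dots,m_r$ reduces matters, up to a $(\log(2+|\eta_0|L))^{r+1}$ loss, to dyadic ranges $n\asymp N$ and $m_j\asymp M_j$. On each such piece I would combine $m_1,\dots,m_r$ into a single variable $M=\sum\pm\eta_j m_j$ of size $\ll\prod_j M_j$, recording the convolution coefficient $a_M$ arising from the $r$-fold Dirichlet convolution of the $\tau_{-a_j,-b_j}(m_j)/m_j^{1+\sigma-it_j}$, which satisfies $|a_M|\ll M^{\eps}\prod_j M_j^{-1-\sigma+\eps}$.

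Applying Lemma~\ref{llin} to the resulting sum, with its fixed parameter $q$ specialised to $\eta_0$, its $m$-variable to our $n$, and its $n$-variable to the combined $M$, produces the bound $\ll|\eta_0|^{\vartheta+\eps}(L^{1+\eps}+|\eta_0|^{1/2})NM'$ per dyadic piece, where $M'=\prod_j M_j$. Combining this with the $\ell$-weight $\ell^{2r\sigma-1-w}$ inherited from the functional-equation prefactors and summing dyadically over $\ell\asymp L$ yields the target $L$-dependence $L^{r(3\sigma+1)-\Re(w)}$, while the Stirling book-keeping in the Gamma factors produces the stated function $K_r$. When $L\geq|\eta_0|^{1/2}$ the first term $L^{1+\eps}$ in Lemma~\ref{llin} dominates, giving the saving $|\eta_0|^{\vartheta+\delta}$ claimed in the first case; for the unrestricted bound one would interpolate between Lemma~\ref{llin} and Weil's individual estimate $|S(m,n;\ell)|\ll(m,n,\ell)^{1/2}\ell^{1/2+\eps}$ so as to exchange the inefficient $|\eta_0|^{1/2+\vartheta}$ coming from the small-$L$ regime of~\eqref{Linnik} for the sharper exponent $\tfrac16+\tfrac{\vartheta}{3}$.

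The main obstacle I expect lies in Steps 2--3: fusing the $r$ divisor-function sums into a single coefficient sequence $\{a_M\}$ compatible with the bilinear structure required by Lemma~\ref{llin}, while simultaneously tracking the polynomial losses in the shifts $t_j$ and the exponent $\sigma$ through the Stirling estimate, so as to recover the precise form of $K_r$ with the exponents $(1+|s|+|t_j|)^{1+4\sigma}$ and $(1+\sigma)^{2r(2\sigma+1)}$. A secondary subtlety is the sharpened \textquotedblleft always\textquotedblright\ exponent $\tfrac16+\tfrac{\vartheta}{3}$, which requires a careful optimisation between the Kloosterman-cancellation estimate and the individual Weil bound rather than a naive use of~\eqref{Linnik} alone (which would only yield the weaker $|\eta_0|^{1/2+\vartheta}$ in the regime $L<|\eta_0|^{1/2}$).
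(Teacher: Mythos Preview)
Your proposal is correct and follows essentially the same route as the paper's proof: apply the functional equation~\eqref{feest} to each Estermann factor, expand the resulting series and $F$ to obtain Kloosterman sums $S(\eta_0 m, n;\ell)$ with $n$ arising from the $r$-fold convolution (the paper packages this directly as a coefficient $f_n$ rather than dyadically splitting each $m_j$), then apply Lemma~\ref{llin} for~\eqref{3asc} and Weil's bound for~\eqref{3asc2}, and take the minimum of the two. Your anticipated obstacles are minor---the convolution step is straightforward once you note $f_n\ll_\delta (A/\delta)^{2r}|n|^{-1-\delta/2}$, and the exponent $\tfrac16+\tfrac{\vartheta}{3}$ drops out of the inequality $\min(a,b)\le a^{1/3}b^{2/3}$ applied to the two estimates.
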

\begin{proof}
Applying the functional equation~\eqref{feest}, expanding $D$ and $F$ into their Dirichlet series, and using~\eqref{cbg} we obtain 
\est{
&\mathfrak S \ll_\delta A^r (1+A\sigma)^{2r(\sigma+\delta+1)}\Big(\prod_{j=1}^r(1+|t_j|)^{2(\sigma+\delta)+1} \Big)\Big|\sum_{\ell\geq1}\sum_{m\geq1} \sum_{n\in\Z}\frac{W_0(\ell) f_n}{m^{1+\sigma+it_0}}S(\eta_0m, n;\ell)\Big|,
}
where 
\est{
W_0(x)&:=W(x/L)x^{r(2\sigma+1)-1-w-\sum_{j=1}^r(t_j+a_j+b_j)}\ll (2L)^{r(3\sigma+1)-1-\Re(w)},\\ 
f_n&:=\sum_{\substack{n_1,\cdots,n_r\in\Z_{\neq 0},\\ n_1+\cdots+n_r=n}}\frac{\tau_{-a_1,-b_1}(n_1)}{n_1^{1+s-it_1}}\cdots \frac{\tau_{-a_r,-b_r}(n_r)}{n_r^{1+s-it_r}}\ll_\delta \pr{ A/{\delta}}^{2r}\frac1{|n|^{1+\delta/2}}.\\
}
Splitting the sums over $n$ and $m$ into diadic blocks and applying~\eqref{Linnik} one easily gets the bound
\es{\label{3asc}
\mathfrak S\ll_\delta L^{r(3\sigma+1)-\Re(w)}\frac{A^{r(\sigma+1)}}{\delta^{2r}}K_{r}(\sigma,w,t_1,\dots,t_j)|\eta_0|^{ \vartheta +\delta} (1+|\eta_0|^\frac12/L),
}
which gives~\eqref{fddca} in the case $L\geq |\eta_0|^\frac12$. Applying Weil's bound rather than~\eqref{Linnik}, one obtains
\es{\label{3asc2}
\mathfrak S\ll_\delta L^{r(3\sigma+1)-\Re(w)} \frac{A^{r(\sigma+1)}}{\delta^{2r}} K_{r}(\sigma,w,t_1,\dots,t_j) L^\frac12,
}
and taking the minimum between~\eqref{3asc} and~\eqref{3asc2} one gets~\eqref{fddca} also in the case $L< |\eta_0|^\frac12$.
\end{proof}

\section{Some assumptions}\label{assumptions}
In this section we set up some notation and make some simplifying assumptions, which we will use throughout the rest of the paper.

First, $q$ will always denote a prime, $k$ an integer greater than $2$, and $\Upsilon$ a subset of $\{1,\dots,k\}$ with even cardinality. Moreover we shall use the convention that $A$ and $\eps$ denote a sufficiently large and arbitrarily small positive constants on which the implicit bounds are allowed to depend and whose values might change from line to line.

Also, we assume $\balpha=(\alpha_1,\dots,\alpha_k)\in \mathbb A _{2C}^k$, $\bbeta=(\beta_1,\dots,\beta_k)\in \mathbb A _{C/2}^k$ for some constant $C>0$ (with $4C/\log q\leq 1/10$), where $\mathbb A_{r}$ denotes the annulus   $\{s\in\C\mid  C/\log q \leq |s|\leq 2C/\log q\}$. This assumption can then be removed by analytic continuation and the maximum modulus principle, since both the left hand side and, by~\eqref{mtma}, the main term on the right hand side of~\eqref{asvfd}  are analytic functions of the shifts in $|\alpha_i|,|\beta_i|\leq 4C/\log q$. We remark in particular, that with the above assumption, we have $|\alpha_i|,|\beta_i|,|\alpha_i-\beta_i|\asymp 1/\log q$.

Moreover, for the rest of the paper we fix an entire function $G_{\balpha,\bbeta}(s)$ as follow:
\es{\label{dfnG}
G_{\balpha,\bbeta}(s):=\frac{Q_{\balpha,\bbeta}(s)}{Q_{\balpha,\bbeta}(0)}\frac{\xi(\frac 12+s)}{\xi(\frac12)},
}
where $\xi(s):=\frac12s(s-1)\pi^{- \pi/2}\Gamma(\frac12s)\zeta(s)$ is the Riemann $\xi$-function and
\est{
Q_{\balpha,\bbeta}(s):=\prod_{i=1}^k(s^2-(\alpha_i-\beta_i)^2).
}
By the functional equation for the Riemann zeta-function we have $G_{\balpha,\bbeta}(-s)=G_{-\balpha,-\bbeta}(s)$ and so $G_{\balpha,\bbeta}(s)$ satisfies the hypothesis of Lemma~\ref{adwede}. Moreover, using Stirling's formula~\eqref{cbg} we also obtain
\es{\label{boundforG}
G_{\balpha,\bbeta}(s)\ll   (\log q)^{2k} e^{-C_1|t|}(1+|\sigma|)^{A(|\sigma|+k)},
}
for all $s=\sigma+it\in\C$ and some $C_1>0$.

Finally, we notice that from the functional equations~\eqref{afce}, for $i=1,\dots,k$, we have the convexity bound
\est{
D_{i}\pr{\tfrac12+\alpha_i,\alpha_i-\beta_i,\tfrac aq}\ll q^{\frac 12}(\log q)^2
}
and so trivially $M_{\Upsilon,k}\ll (Aq^{\frac 12}(\log q)^2)^k$. Also, from~\eqref{mtma} it is easy to see that one also has 
\est{
\sum_{\{\alpha_i',\beta_i'\}=\{\alpha_i,\beta_i\}}\mathscr M_{\balpha',\bbeta'}\ll q^{\frac k2-1}(A\log q)^k.
}
It follows that Theorem~\ref{mtws} is trivial if $k\gg \log q/\log \log q$ since in this case $(Ak)^{Ak}\gg q^{A/2}$. Thus, we will assume $k=o(\log q/\log \log q)$. In particular, for $q$ large enough we have $|\alpha_i|,|\beta_i|<1/\log q<1/(k\log\log q)<\frac{\eps}{2k}$ and a fortiori
\est{
|\alpha_1|+\cdots+|\alpha_k|+|\beta_1|+\cdots+|\beta_k|< \eps.
}
Moreover, notice that under these assumptions we also have the inequality $(k/\eps)^{Ak}\ll (\log q)^{Ak}\ll {}q^{\eps}$ which we shall often use. 


\section{Dividing into diagonal and off-diagonal terms and structure of the proof}\label{bmp}

By the approximate functional equation~\eqref{feqw} and the orthogonality of additive characters, we can decompose $M_{\Upsilon,k}$ into diagonal and off-diagonal terms:
\est{
M_{\Upsilon,k}&:=\frac1{\varphi(q)}\sum_{a=1}^{q-1}\prod_{i=1}^k D_{i;\alpha_i,\beta_i}(\tfrac12,\tfrac aq)
=\mathcal D_{\balpha,\bbeta}+X_{\balpha,\bbeta}\mathcal D_{-\balpha,-\bbeta}+\mathcal {O}_{\balpha,\bbeta}+X_{\balpha,\bbeta}\mathcal O_{-\balpha,-\bbeta},
}
where
\est{
\mathcal D_{\balpha,\bbeta}&:=\frac{i^{|\Upsilon|}}{2^{k}}\sum_{\epsilon\in\{\pm1\}^k}\rho_{\Upsilon}(\epsilon)\hspace{-0.5em}\sum_{\substack{\pm_1n_1+\cdots +\pm_kn_k=0}} \hspace{-0.5em}\frac{\tau_{\alpha_1,\beta_1}(n_1)\cdots \tau_{\alpha_k,\beta_k}(n_k)}{(n_1\cdots n_k)^{\frac12}}V_{\balpha,\bbeta}\pr{\frac{n_1\cdots n_k}{q^k}},\\
\mathcal O_{\balpha,\bbeta}&:=\frac{i^{|\Upsilon|}}{2^{k}}\sum_{\epsilon\in\{\pm1\}^k}\rho_{\Upsilon}(\epsilon)\mathcal{O}'_{\epsilon,\balpha,\bbeta},\\
\mathcal{O}'_{\epsilon,\balpha,\bbeta}&:=\sum_{d|q}d\,\frac{\mu(\frac qd)}{\varphi(q)}
\sum_{\substack{ d|( \pm_1n_1+\cdots +\pm_kn_k),\\ \pm_1n_1+\cdots +\pm_kn_k\neq0}}\hspace{-0.4em}
\frac{\tau_{\alpha_1,\beta_1}(n_1)\cdots \tau_{\alpha_k,\beta_k}(n_k)}{(n_1\cdots n_k)^{\frac12}}V_{\balpha,\bbeta}\pr{\frac{n_1\cdots n_k}{q^k}}\\
}
and the sum over $\epsilon$ is a sum over $\epsilon=\{\pm_11,\dots,\pm_k1\}\in\{1,-1\}^k$.

The diagonal term $\mathcal D_{\balpha,\bbeta}$ will be treated in Section~\ref{dts}, using the results of~\cite{Bet}. The terms with $d=1$ in $\mathcal O_{\epsilon,\balpha,\bbeta}'$ could be easily dealt with in a simple way, however it is more convenient to keep them together with the other off-diagonal terms. 


\begin{lemma}\label{asaca}
We have
\es{\label{dteq}
\mathcal D_{\balpha,\bbeta}=\mathscr D_{\balpha,\bbeta}+O(q^{\frac k2-\frac{2k}{k+1}+\eps}),
}
where 
\est{
\mathscr D_{\balpha,\bbeta}:=\sum_{\substack{\mathcal I\cup \mathcal J=\{1,\dots,k\},\ \mathcal I\cap\mathcal J=\emptyset,\\|\mathcal I|>|\mathcal J|+1,\ |\mathcal I \cap\Upsilon|\tn{ even}}}\sum_{\substack{\{\alpha_i^\prime,\beta_i^\prime\}=\{\alpha_i,\beta_i\}\ \forall i\in\mathcal I,\\ (\alpha_j^\prime,\beta_j^\prime)=(\alpha_j,\beta_j)\ \forall j\in\mathcal J}}\mathscr D'_{\mathcal I;\balpha',\bbeta'}
}
and $\mathscr D'_{\mathcal I;\balpha,\bbeta}$ is as defined in~\eqref{mtmt}.
\end{lemma}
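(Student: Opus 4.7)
The plan is to express $\mathcal D_{\balpha,\bbeta}$ as a contour integral via Mellin inversion of $V_{\balpha,\bbeta}$ and then to shift the contour past the singularities of the resulting diagonal Dirichlet series, invoking the meromorphic continuation proved in~\cite{Bet}. Substituting the definition of $V_{\balpha,\bbeta}$ and interchanging summation and integration (valid on $\Re(s)=c$ for $c>1/2$) gives
\est{
\mathcal D_{\balpha,\bbeta}=\frac{i^{|\Upsilon|}}{2^{k}}\sum_{\epsilon\in\{\pm 1\}^k}\rho_{\Upsilon}(\epsilon)\,\frac1{2\pi i}\int_{(c)}G_{\balpha,\bbeta}(s)g_{\balpha,\bbeta}(s)\,q^{ks}\,Z_{\epsilon}\pr{\tfrac12+s;\balpha,\bbeta}\,\frac{ds}{s},
}
where, for signs $\pm_i$ dictated by $\epsilon$,
\est{
Z_{\epsilon}(w;\balpha,\bbeta):=\sum_{\substack{n_1,\dots,n_k\geq 1\\ \pm_1 n_1+\cdots+\pm_k n_k=0}}\frac{\tau_{\alpha_1,\beta_1}(n_1)\cdots\tau_{\alpha_k,\beta_k}(n_k)}{(n_1\cdots n_k)^w}
}
converges absolutely in the half-plane $\Re(w)>1-1/k$.

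The central input will be the meromorphic continuation of $Z_{\epsilon}$ to the half-plane $\Re(w)>1-\frac{2}{k+1}$, with polynomial growth on vertical lines and an explicit polar expansion inside the strip $1-\frac{2}{k+1}<\Re(w)\leq 1-\frac1k$, as established in~\cite{Bet}. I would shift the contour from $\Re(s)=c$ to $\Re(s)=\frac12-\frac{2}{k+1}+\eps$. On the new line $|q^{ks}|=q^{\frac k2-\frac{2k}{k+1}+k\eps}$; combined with the exponential decay of $G_{\balpha,\bbeta}(s)$ from~\eqref{boundforG} and with the polynomial bounds on $g_{\balpha,\bbeta}$ and on $Z_{\epsilon}$, this yields the claimed error term after absorbing $k\eps$ into $\eps$. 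Since $\frac12-\frac{2}{k+1}\geq 0$ for $k\geq 3$, the contour never crosses $s=0$, so the $1/s$ factor contributes no extra residue, and the gamma ratios in $g_{\balpha,\bbeta}$ are holomorphic throughout the shift region.

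The singularities of $Z_{\epsilon}(\tfrac12+s;\balpha,\bbeta)$ crossed during the shift will produce the main term. Using the analysis in~\cite{Bet}, these poles should be indexed by disjoint decompositions $\{1,\dots,k\}=\mathcal I\cup\mathcal J$: informally, $\mathcal I$ labels the variables lying on a balanced sub-diagonal of typical size $\approx q^{k/|\mathcal I|}$, and $\mathcal J$ labels the variables of effectively bounded size. Each residue should assemble into a product of $\zeta$ and $\Gamma$ factors in which the shifts $\alpha_i,\beta_i$ for $i\in\mathcal I$ enter symmetrically, giving rise to the sum over $\{\alpha'_i,\beta'_i\}=\{\alpha_i,\beta_i\}$ appearing in $\mathscr D_{\balpha,\bbeta}$. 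The dominance condition $|\mathcal I|>|\mathcal J|+1$ should correspond precisely to the arithmetic criterion that the associated pole has $\Re(s)>\frac12-\frac{2}{k+1}$, and the parity requirement that $|\mathcal I\cap\Upsilon|$ be even should emerge from averaging over $\epsilon$ against $\rho_{\Upsilon}(\epsilon)=\prod_{i\in\Upsilon}(\pm_i 1)$, since the residue is invariant under simultaneously flipping all $\pm_i$ with $i\in\mathcal I$.

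The hard part will be this residue bookkeeping: precisely identifying each pole of $Z_{\epsilon}$ in the intermediate strip, computing its residue, and matching the output with the prescribed $\mathscr D'_{\mathcal I;\balpha',\bbeta'}$. This is exactly where the detailed polar expansion of $Z_{\epsilon}$ from~\cite{Bet} enters in an essential way; once it is applied, the remaining combinatorial checks (dominance, parity, and symmetrization of shifts) follow mechanically.
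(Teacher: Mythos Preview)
Your proposal is correct and follows essentially the same approach as the paper: write $\mathcal D_{\balpha,\bbeta}$ as a Mellin integral of $\mathcal W_{\balpha,\bbeta}(\tfrac12+s)q^{ks}$, invoke the meromorphic continuation and polar data of $\mathcal W$ from~\cite{Bet}, and shift the contour to $\Re(s)=\tfrac12-\tfrac{2}{k+1}+\eps$ to produce the error term, with the residues giving $\mathscr D_{\balpha,\bbeta}$. The only cosmetic difference is that the paper organizes the shift by first subtracting the explicit polar part $\mathcal W^\dagger$ supplied by~\cite{Bet} (so the remainder is holomorphic and can be moved directly), then handling $\mathcal W^\dagger$ separately by moving to $\Re(s)=-\tfrac12$; this avoids having to track the residues one by one during the shift but is equivalent to what you describe.
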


For the off-diagonal terms we introduce partitions of unity. We need a function $P:\R_{\geq0}\rightarrow\R_{\geq0}$, satisfying
\est{
\sumdagger_{N}P(x/N)=1,\qquad \forall x>0,
}
where by $\sumdagger$ we mean that the index runs through the elements of a certain (fixed) set of positive real numbers such that $\sumdagger_{X^{-1}\leq N\leq X}1\ll \log X$. 
%
%
Also, we require that $P(x)$ is supported on $1\leq x\leq 2$ and $P^{(j)}(x)\ll j^{Aj}$ for some $A>0$. It is not difficult to construct such a partition.\footnote{For example take the set of indexes in $\sumdagger$ to be $\{(\frac 32)^{n}\mid n\in\Z\}$ and $P(x)=\int_{1}^{\frac32}\eta(xy)\frac{dy}{y}$, where $\eta(x)=Ce^{-\frac1{1-(4x-7)^2}}$ for $|x-\frac{7}4|<\frac14$ and $\eta(x)=0$ otherwise, and where  $C$ is such that $\int_\R\eta(y)\frac{dy}{y}=1$.%
}
Notice that under these conditions, the Mellin transform of $P(x)$,
\est{
\tilde P(s):=\int_{0}^{\infty}P(x)x^{s-1}dx,
}
is entire and satisfies
\es{\label{fnn}
\tilde P(\sigma+it)\ll (1+j+|\sigma|)^{Aj}A^{|\sigma|}(1+|t|)^{-j}\qquad \forall j\geq0.
}
Using partitions of unity we can decompose $\mathcal O'_{\balpha,\bbeta}$ into
\est{
\mathcal O_{\epsilon,\balpha,\bbeta}:=\sumdagger_{N_1,\cdots,N_k}\mathcal O''_{\epsilon,\balpha,\bbeta}(N_1,\cdots,N_k),
}
where $\mathcal O''_{\epsilon,\balpha,\bbeta}(N_1,\cdots,N_k)$ is defined as $\mathcal {O}'_{\epsilon,\balpha,\bbeta}$, with the only difference that the summands are multiplied by $P(n_1/N_1)\cdots P(n_k/N_k)$.  In the following we will often omit to indicate the dependencies from $N_1,\dots N_k$ for ease of notation.

The following two Lemmas summarize our results on the off-diagonal terms. The first Lemma, which is effective when $N_1,\cdots, N_k$ are close together, uses the spectral theory of automorphic forms (via the bounds proven in Section~\ref{eef}) and is proven  in Section~\ref{ttctd}. The second lemma, which is effective when one of the $N_i$ is quite larger than the others, uses the bounds for sums of Kloosterman sums proven by Young in~\cite{You11b} and is proven in Section~\ref{ttftd}.
\begin{lemma}\label{ctd}
Let $N_{\tn{max}}$ be the maximum among $N_1,\dots,N_k$. Then
\est{
&\sum_{\epsilon\in\{\pm1\}^k}\rho_{\Upsilon}(\epsilon)\mathcal O''_{\epsilon,\balpha,\bbeta}(N_1,\cdots,N_k) = \mathcal{ M}_{\balpha,\bbeta}(N_1,\cdots,N_k)+ \mathcal{ E}_{1;\balpha,\bbeta}(N_1,\cdots,N_k),\\
}
where 
\es{\label{gdfsc}
\mathcal{ E}_{1;\balpha,\bbeta} \ll  \frac{N_{\max}^{\eps}}{q^{1-\eps} } \pr{\frac{q^\vartheta N_{\max}^{\frac {k}2+\frac12}}{(N_1\cdots N_k)^{\frac12}}+\frac{q^{\frac k2-\frac13+\frac\vartheta 3} N_{\max}^\frac12}{(N_1\cdots N_k)^{\frac12}}+\frac{q^{\frac16+\frac \vartheta3}(N_1\cdots N_k)^\frac12}{ N_{\max}}+\frac{(N_1\cdots N_k)^\frac12}{N_{\max}^{\frac12}}}
}
and $\mathcal M_{\balpha,\bbeta}(N_1,\cdots,N_k) $ is defined in~\eqref{dfmm}.
Moreover,
\es{\label{bmaf}
 \mathcal M_{\balpha,\bbeta}(N_1,\cdots,N_k) \ll q^{\eps}(N_1\cdots N_k)^{\frac12}N_{\tn{max}}^{-1+\eps}.
}
\end{lemma}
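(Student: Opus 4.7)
The plan is to express $\mathcal O''_{\epsilon,\balpha,\bbeta}$ as an iterated Mellin integral, detect the congruence $q\mid\sum\pm_in_i$ by additive characters modulo $q$, extract the main terms by residue calculus, and estimate what remains using the bounds proved in Section~\ref{eef}.

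First I would write
\est{
V_{\balpha,\bbeta}(x)=\frac1{2\pi i}\int_{(c)}G_{\balpha,\bbeta}(s)g_{\balpha,\bbeta}(s)x^{-s}\frac{ds}{s}
}
and replace each cut-off $P(n_i/N_i)$ by its Mellin expansion through $\tilde P$; this decouples the variables $n_i$. Applying orthogonality of additive characters modulo $q$ to the weights $\frac{d\mu(q/d)}{\varphi(q)}\mathbf{1}_{d\mid\sum\pm_in_i}$ appearing in $\mathcal O''_{\epsilon,\balpha,\bbeta}$, the $n_i$-sums factorise into products of the form $\sum_n \tau_{\alpha_i,\beta_i}(n)n^{-1/2-s-s_i}\e{\pm_i hn/q}$, which are precisely $D_{i;\alpha_i,\beta_i}(\tfrac12+s+s_i,\pm_i h/q)$. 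Thus the off-diagonal becomes a multiple contour integral whose integrand is
\est{
\frac1{\varphi(q)}\sum_{h=1}^{q-1}\prod_{i=1}^kD_{i;\alpha_i,\beta_i}\pr{\tfrac12+s+s_i,\tfrac{\pm_i h}q}
}
times $G_{\balpha,\bbeta}(s)g_{\balpha,\bbeta}(s)\,q^{ks}\prod_i\tilde P(s_i)N_i^{s_i}\,ds\,ds_1\cdots ds_k$.

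To extract $\mathcal M_{\balpha,\bbeta}(N_1,\ldots,N_k)$ I would shift the contours in each $s_i$ past the polar parts of the Estermann functions described in Lemma~\ref{properties of Estermann}, collecting residues whose combined shape matches~\eqref{dfmm}; the bound~\eqref{bmaf} then follows by estimating each such residue using Stirling's formula together with the hypothesis $|\alpha_i|,|\beta_i|\asymp 1/\log q$.

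For the shifted integrals I would single out an index $i^*$ with $N_{i^*}=N_{\max}$, apply the functional equation~\eqref{feest} to $D_{i^*;\alpha_{i^*},\beta_{i^*}}$, Dirichlet-expand the dual, and perform a diadic decomposition in the new variable. The resulting $h$-sums modulo $q$ match the shape required to apply Lemmas~\ref{DIbound2} and~\ref{DIbound} with $r=k-1$, the periodic zeta-factor being played by the piece singled out from $D_{i^*}$. The four summands in~\eqref{gdfsc} arise from choosing, in each range of $N_1,\ldots,N_k$, the optimal combination of these two bounds: the first two terms come from the Kim--Sarnak input in Lemma~\ref{DIbound} (the $q^{\vartheta}$ and $q^{1/6+\vartheta/3}$ factors corresponding respectively to the two cases of~\eqref{fddca}), while the last two terms come from the purely Weil-based Lemma~\ref{DIbound2}.

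The main obstacle will be the book-keeping of residues during the multiple contour shifts: with $k$ variables many residue configurations contribute, and identifying their sum with the explicit expression~\eqref{dfmm} requires a careful combinatorial argument. A secondary difficulty is that several of the Mellin formulas involved (such as those needed to handle quantities like $(\sum_i\pm_in_i)^{-s}$ appearing after the functional equation) converge only conditionally; to manipulate them safely one must appeal to the absolutely convergent modification of the iterated Mellin formula developed in Section~\ref{amell}.
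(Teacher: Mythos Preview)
Your approach has a genuine structural gap. By detecting the congruence $d\mid\sum_i\pm_in_i$ via additive characters modulo $q$, you arrive at products $\prod_i D_{i}(\tfrac12+s+s_i,\pm_ih/q)$ with the \emph{fixed} modulus $q$. After applying the functional equation to one factor and expanding, the $h$-sum collapses to a single Kloosterman sum $S(m,n;q)$. But Lemmas~\ref{DIbound2} and~\ref{DIbound} are stated for a \emph{sum over moduli} $\ell$ (weighted by $P(\ell/L)$); they simply do not apply to a fixed modulus. More to the point, the $q^{\vartheta}$ and $q^{\frac16+\frac\vartheta3}$ savings in~\eqref{gdfsc} ultimately come from the Deshouillers--Iwaniec/Kim--Sarnak bound of Lemma~\ref{llin}, which requires an average over the Kloosterman modulus. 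With modulus $q$ fixed only Weil's bound is available, and the stated error cannot be reached. A further symptom: the residues you would collect from the polar parts $q^{1-\alpha_i-\beta_i-2s}\zeta(s+\alpha_i)\zeta(s+\beta_i)$ carry powers of $q$ but no free $\ell$-sum, so they do not match the definition~\eqref{dfmm}, which is built from $\sum_\ell\sumstar_{h\bmod\ell}\cdots$.

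The paper's route is different in exactly this respect. It singles out the largest variable (say $n_1$), writes $n_1=\pm_2n_2\pm\cdots\pm_kn_k\pm_*f$ with $d\mid f$, and then applies Young's approximate Ramanujan-sum identity~\eqref{har} to $\tau_{\alpha_1,\beta_1}(n_1)$. This introduces an \emph{auxiliary} modulus $\ell$ through the Ramanujan sums $c_\ell(n_1)$, and it is the (dyadically decomposed) sum over $\ell$ that puts the problem into the shape required by Lemma~\ref{DIbound}, with $\eta_0=d$ (so $|\eta_0|=q$ in the main case). The remaining analytic obstacle --- separating $(\pm_2n_2\pm\cdots\pm_*f)^{-\frac12-\alpha_1-u_1+\frac w2}$ --- is handled by the iterated Mellin formula of Section~\ref{amell}, after which one shifts the $u_i$-contours for $i\ge2$ across the poles of $D_{\alpha_i,\beta_i}(\cdot,\pm_ih/\ell)$; these residues, indexed by subsets $\mathcal I\subseteq\{1,\dots,k\}$, are precisely the pieces $\mathcal U_{\mathcal I;\balpha,\bbeta}$ that assemble into~\eqref{dfmm}.
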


\begin{lemma}\label{pfofa}
Let $N_{\tn{max}}$ be the maximum among $N_1,\dots,N_k$. Then 
\est{
\mathcal O''_{\epsilon,\alpha,\beta}(N_1,\cdots,N_k) &\ll q^{\eps}\prbigg{ \pr{\frac{N_1\cdots N_k}{N_{\tn{max}}}}^{\frac12-\frac{1}{2(k-1)}}\prbigg{\frac{q^{\frac34}}{N_{\tn{max}}^{1/2}}+1}  +\frac{(N_1\cdots N_k)^{\frac12}}{ N_{\tn{max}}^{3/4}}}.
}
\end{lemma}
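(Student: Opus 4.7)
The plan is to isolate the summation over the longest variable and estimate the resulting divisor sum in an arithmetic progression via the Weil-bound-based techniques of~\cite{You11b}, combining over the remaining variables by iterated Cauchy--Schwarz.

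Without loss of generality, assume $N_k = N_{\max}$. Since $q$ is prime, the condition $d\mid\pm_1 n_1+\cdots\pm_k n_k$ contributes only for $d\in\{1,q\}$, and in both cases $n_k$ is restricted to a single residue class modulo $d$. For $d=1$ the congruence is vacuous, and combined with the factor $\mu(q)/\varphi(q)\asymp 1/q$ a convolution-type bound for the shifted divisor sum in $(n_1,\ldots,n_k)$ shows that this piece is absorbed into the last term $(N_1\cdots N_k)^{1/2}/N_{\max}^{3/4}$ of the stated bound. The main case is $d=q$, where after Mellin inversion in $V_{\balpha,\bbeta}$ and in the weights $P(n_i/N_i)$ one reduces to bounding, uniformly in $m\in\Z/q\Z$ with $m\neq 0$, the divisor-in-AP sum
\est{
T(m):=\sum_{\substack{n_k\sim N_k\\ n_k\equiv m\,(q)}}\frac{\tau_{\alpha_k,\beta_k}(n_k)}{n_k^{1/2}}W(n_k/N_k)
}
for smooth $W$ of controlled derivatives. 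Opening $\tau_{\alpha_k,\beta_k}(n_k)=\sum_{ab=n_k}a^{-\alpha_k}b^{-\beta_k}$, applying Poisson summation to the longer of the two divisor factors modulo $q$, and invoking Young's Weil-based Kloosterman sum estimate yields $T(m)\ll q^\eps(q^{3/4}/N_{\max}^{1/2}+1)$, plus a main term which cancels against the $d=1$ piece.

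The outer summation over $(n_1,\ldots,n_{k-1})$ is then handled by iterated Cauchy--Schwarz. Using only the pointwise bound on $T$ together with the trivial estimate $\sum_{n\sim N}\tau(n)/n^{1/2}\ll N^{1/2}(\log q)^{O(1)}$ in each variable would produce a factor $(N_1\cdots N_{k-1})^{1/2}$, which is too weak. Instead, one exploits the second-moment estimate $\sum_{m\,(q)}|T(m)|^2\ll q^{\eps}(1+N_{\max}/q)$, which follows by expanding the square and observing that $n_k\equiv n_k'\,(q)$ with $n_k,n_k'\sim N_{\max}$ essentially forces $n_k=n_k'$ when $N_{\max}<q$. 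Applying Cauchy--Schwarz successively on each of the $k-1$ outer variables distributes the savings and reduces the exponent $1/2$ on each $N_i$ to $(k-2)/(2(k-1))=\frac12-\frac1{2(k-1)}$, producing the first two terms of the stated bound.

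The main obstacle will be the iterated Cauchy--Schwarz step, specifically controlling the interaction between the outer $\tau$-weights and the diagonal of the $(k-1)$-fold correlation of $T$, so that the savings propagate correctly in each variable rather than being lost at one stage. Secondary technicalities include maintaining polynomial-in-$k$ control of the Mellin tails of $V_{\balpha,\bbeta}$ and $P$ (so that the $q^\eps$ absorbs all factorial factors), and separately handling the edge cases where the intermediate partial sum $\pm_1 n_1+\cdots\pm_{k-1}n_{k-1}$ happens to lie in an exceptional residue class modulo $q$; these are routine and do not affect the overall bound.
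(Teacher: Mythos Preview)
Your general plan---isolate the longest variable and feed it into a Poisson/Weil divisor-in-AP estimate---matches the paper's in spirit, but the step you flag as ``the main obstacle'' is in fact a genuine gap: there is no iterated Cauchy--Schwarz mechanism that turns a pointwise bound $T(m)\ll q^{3/4}/N_{\max}^{1/2}+1$ together with a second-moment bound on $T$ into the exponent $\tfrac12-\tfrac{1}{2(k-1)}$ on each of $N_1,\dots,N_{k-1}$. Cauchy--Schwarz applied in one outer variable gives a factor $N_i^{0}$ and leaves a mean square $\sum_{n_i}|T(\cdots)|^2$ that depends on how often each residue class is hit, which reintroduces the other $N_j$ with exponent $1$; iterating does not interpolate to the fractional exponent you want. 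The claimed second-moment bound $\sum_m|T(m)|^2\ll q^{\eps}(1+N_{\max}/q)$ is also not what is needed here, since the outer sum is not over all residues but over the multiset of values $\pm_1 n_1+\cdots+\pm_{k-1}n_{k-1}\bmod q$ with multiplicity.

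The paper's proof avoids this entirely. It opens the divisor function on the largest variable, writing $n_1=f_1g_1$ with $F_1\ge G_1$, and applies Poisson in $f_1$. This does not produce a single divisor-in-AP error $T(m)$ but rather exponential sums $\sum_{(g_1,q)=1}e(\ell m\overline{g_1}/q)W(g_1/G_1)$, to which Young's estimate (Lemma~\ref{YL}, i.e.\ Proposition~4.3 of~\cite{You11b}) is applied after summing over $\ell$. The outer variables $n_2,\dots,n_k$ are handled by trivial counting of solutions to $n\ell\equiv -r\pmod q$, giving a factor $(N_3\cdots N_k)^{1/2}$. The fractional exponent you are after then comes from a one-line inequality at the very end: since $N_2$ is the second largest, $N_3\cdots N_k\le N_2^{k-2}$, whence $(N_3\cdots N_k)^{1/2}\le (N_2\cdots N_k)^{\frac12-\frac{1}{2(k-1)}}$. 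No Cauchy--Schwarz or H\"older is used.
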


Notice that in the crucial case $N_1\cdots N_k\asymp q^{k}$ Lemma~\ref{ctd} is non-trivial for $N_{\max}\ll q^{2-2\frac{\vartheta+1}{k+1}-\delta}$ for any fixed $\delta>0$, whereas Lemma~\ref{pfofa} is non-trivial as long as $N_{\tn max}\gg_kq^{\frac43+\delta}$. In particular, in order to have a non-trivial bound for all ranges we need $\vartheta<\frac{k-2}{3}$ and so for $k=3$ we need $\theta<\frac13$.

The following lemma, which we shall prove in Section~\ref{mmtt}, allows us to combine the various main terms.
\begin{lemma}\label{smtmt}
We have 
\est{
&\frac{i^{|\Upsilon|}}{2^{k}} \sumdagger_{\substack{N_1,\dots,N_k,\\ N_1\cdots N_k\ll q^{k+\eps}}}( \mathcal M_{\balpha,\bbeta}(N_1,\cdots,N_k)+X_{\bbeta,\balpha}\mathcal M_{-\bbeta,-\balpha}(N_1,\cdots,N_k))\\
&=-(\mathscr D_{\balpha,\bbeta}+X_{\bbeta,\balpha}\mathscr D_{-\bbeta,-\balpha})+\sum_{\{\alpha_i',\beta_i'\}=\{\alpha_i,\beta_i\}}\mathscr M_{\balpha',\bbeta'}+O(q^{\frac k2-\frac32+\iota_k+\eps}),
}
where $\mathcal M_{\alpha,\beta}$ is as defined in~\eqref{gabcd} and $\iota_k=\frac3{14}$ if $k=4$ and $\iota_k=0$ otherwise. 
\end{lemma}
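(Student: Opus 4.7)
My plan is to express $\mathcal M_{\balpha,\bbeta}(N_1,\ldots,N_k)$ as a $k$-fold Mellin--Barnes integral, collapse the dyadic sum over the $N_i$'s, and then extract main terms by shifting contours. From Lemma~\ref{ctd} and its proof (which furnishes the formula referenced as~\eqref{dfmm}), $\mathcal M_{\balpha,\bbeta}(N_1,\ldots,N_k)$ should have the shape
\est{
\mathcal M_{\balpha,\bbeta}(N_1,\ldots,N_k)=\frac{1}{(2\pi i)^k}\int_{(c)}\hspace{-0.4em}\cdots\int_{(c)}\tilde P(s_1)\cdots\tilde P(s_k)\,H_{\balpha,\bbeta}(\mathbf{s})\prod_{i=1}^kN_i^{-s_i}\,ds_1\cdots ds_k,
}
where $H_{\balpha,\bbeta}(\mathbf{s})$ is an explicit kernel built out of shifted zeta values, $\Gamma_i$-factors, and the appropriate power of $q$. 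Using the Mellin inversion identity $\sum^\dagger_N N^{-s}\tilde P(s)=1$ (or rather the quasi-identity up to boundary contributions) together with the rapid decay~\eqref{fnn} of $\tilde P$ on vertical lines, the dyadic sum over the range $N_1\cdots N_k\ll q^{k+\eps}$ collapses: the full sum removes the partition cut-offs, and the tail $N_1\cdots N_k>q^{k+\eps}$ contributes $O(q^{-100})$ because shifting $\Re s_i$ slightly to the right gains arbitrary saving from $\tilde P$.

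Next, I will pair the expression with its dual $X_{\bbeta,\balpha}\mathcal M_{-\bbeta,-\balpha}(N_1,\ldots,N_k)$. The change of variables $s_i\mapsto -s_i$ combined with the symmetry relation for $X_{\balpha,\bbeta}$ and the gamma factors turns the pair into a single symmetric multiple contour integral, closely resembling the contour representation~\eqref{mtma} of the target $\sum_{\{\alpha_i',\beta_i'\}}\mathscr M_{\balpha',\bbeta'}$. I will then shift the contours $\Re s_i=c$ simultaneously to the left, past the polar divisors of $H_{\balpha,\bbeta}$ coming from the $\zeta$-factors $\zeta(1+\alpha_i-\beta_i+\cdots)$ and from the two ``long'' zeta factors $\zeta(\tfrac k2\pm\sum\alpha_i)$. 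Each choice of which of the two poles $s_i=\alpha_i'-\beta_i'$ to cross in the $i$-th variable gives a residue identified with one of the $2^k$ summands of $\sum_{\{\alpha_i',\beta_i'\}=\{\alpha_i,\beta_i\}}\mathscr M_{\balpha',\bbeta'}$, by the residue calculation already carried out in deriving~\eqref{mtma}. The residues at the ``partial'' poles, where only a strict subset $\mathcal I\subsetneq\{1,\dots,k\}$ of variables picks up a contribution, produce exactly the terms appearing in the expansion of $\mathscr D_{\balpha,\bbeta}+X_{\bbeta,\balpha}\mathscr D_{-\bbeta,-\balpha}$ from Lemma~\ref{asaca}, but with the \emph{opposite} sign because in the off-diagonal analysis these poles enter through the cancellation $(\pm_1n_1+\cdots\pm_k n_k\neq 0)$ rather than the diagonal restriction. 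This furnishes the term $-(\mathscr D_{\balpha,\bbeta}+X_{\bbeta,\balpha}\mathscr D_{-\bbeta,-\balpha})$ on the right-hand side.

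Finally, I will bound the remaining shifted integral. By a contour shift to $\Re s_i=-1-\delta$ for small $\delta>0$, together with Stirling's formula~\eqref{cbg} and the convexity bound $\zeta(\sigma+it)\ll(|t|+1)^{(1-\sigma)/2}(\log(|t|+2))$ in the critical strip, the residual integral is bounded by $O(q^{k/2-3/2+\eps})$ in general. This step is entirely standard except when $k=4$, where one encounters a pole whose position forces the contour to be stopped short of the target line; quantifying this obstruction via the Deligne bound (or the explicit $\vartheta=7/64$) yields the loss factor $q^{\iota_4}$ with $\iota_4=3/14$.

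The main obstacle, which I expect to occupy most of the technical writing, is the bookkeeping of residues across the $k$-dimensional shift: verifying that the $2^k$ ``principal'' residues assemble precisely into the sum over $\{\alpha_i',\beta_i'\}=\{\alpha_i,\beta_i\}$, and that the ``partial'' residues match, sign and all, the sum $\mathscr D_{\balpha,\bbeta}$ defined through the subsets $\mathcal I,\mathcal J$ in Lemma~\ref{asaca}. Once this combinatorial correspondence is pinned down, both the identification of the main term and the cancellation with the diagonal are automatic. The special $k=4$ case will require an ad hoc bound using Lemma~\ref{DIbound2} to absorb the anomalous polar contribution, yielding the exceptional exponent $\iota_4=3/14$.
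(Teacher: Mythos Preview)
Your broad outline (collapse the dyadic sums via Lemma~\ref{sopu}, shift a contour, identify residues with the $\mathscr D$-terms and with $\mathscr M_{\balpha',\bbeta'}$, bound the remainder) is correct, but the proposal rests on a mistaken picture of what $\mathcal M_{\balpha,\bbeta}(N_1,\ldots,N_k)$ actually is. By~\eqref{dfmm} and~\eqref{wefcw} it is \emph{already} a sum over subsets $\mathcal I\subseteq\{1,\dots,k\}$ with $|\mathcal I|>|\mathcal J|+1$ and over shift-swaps, and each summand $\mathcal U_{\mathcal I;\balpha,\bbeta}$ carries an arithmetic sum $\sum_\ell\ell^{-\cdots}\sum^*_{h\bmod\ell}F(\cdots,\tfrac{qh}{\ell})\prod_{j\in\mathcal J}D_j(\cdots,\tfrac{h}{\ell})$. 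This is not a kernel of zeta and gamma factors; the $\ell,h$-sum has no closed form unless $\mathcal J=\emptyset$. Consequently there is no $k$-fold contour shift producing $2^k$ ``principal'' residues plus ``partial'' ones indexed by subsets $\mathcal I$: the $\mathcal I$-decomposition is \emph{input}, not something that emerges from a multi-variable residue calculus.

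What the paper does instead is, for each fixed $\mathcal I$, sum over the partitions to reduce $\mathcal U_{\mathcal I;\balpha,\bbeta}$ to a \emph{single} integral $\mathscr V_{\mathcal I;\balpha,\bbeta}$ over $s$, and then move that one line of integration. The $\ell,h$-sum is controlled on the new line precisely by Lemma~\ref{DIbound2} (Weil's bound, not $\vartheta$); the value $\iota_4=\tfrac{3}{14}$ is the constraint $c_s=\tfrac{3}{4k-2}$ forced by that lemma when $|\mathcal I|=|\mathcal J|+2$ and $k=4$, and has nothing to do with the Kim--Sarnak exponent. The residue at $s=s'$ for each $\mathcal I\neq\{1,\dots,k\}$ is then matched to $-\mathscr D'_{\mathcal I;\balpha,\bbeta}$ via the identity $F(0,h/\ell)+F(0,-h/\ell)=-1$. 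Only for $\mathcal I=\{1,\dots,k\}$ (so $\mathcal J=\emptyset$) does the $\ell,h$-sum collapse to $\zeta(s_1)\zeta(s_1+s_2-1)/\zeta(s_2)$, and only there does the pairing with $X_{\bbeta,\balpha}\mathscr V'_{I_k;-\bbeta,-\balpha}$ enter, cancelling the remaining integral and leaving the single residue $\mathscr M_{\balpha,\bbeta}$. Your plan to pair first and then shift $k$ contours does not reflect this structure and would not go through as written.
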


We conclude the section with the deduction of Theorem~\ref{mtws} from the above lemmas. 
\begin{proof}[Proof of Theorem~\ref{mtws}]
First we remark that by~\eqref{bmaf}, we can rewrite Lemma~\ref{pfofa} as
\est{
&\sum_{\epsilon\in\{\pm1\}^k}\rho_{\Upsilon}(\epsilon)\mathcal O''_{\epsilon,\alpha,\beta}(N_1,\cdots,N_k) = \mathcal{ M}_{\alpha,\beta}(N_1,\cdots,N_k)+\mathcal{ E}_{2;\alpha,\beta}(N_1,\cdots,N_k),
}
where
\est{
\mathcal{ E}_{2;\alpha,\beta}&\ll   q^{\eps}\prbigg{ \pr{\frac{N_1\cdots N_k}{N_{\tn{max}}}}^{\frac12-\frac{1}{2(k-1)}}\prbigg{\frac{q^{\frac34}}{N_{\tn{max}}^{1/2}}+1}  +\frac{(N_1\cdots N_k)^{\frac12}}{ N_{\tn{max}}^{3/4}}}.\\
}
Moreover, we also have 
\est{
\sum_{\epsilon\in\{\pm1\}^k}\rho_{\Upsilon}(\epsilon)\mathcal O''_{\epsilon,\alpha,\beta}(N_1,\cdots,N_k) = \mathcal{ M}_{\alpha,\beta}(N_1,\cdots,N_k)+\mathcal{ E}_{3;\alpha,\beta}(N_1,\cdots,N_k),
}
with $\mathcal{ E}_{3;\alpha,\beta}(N_1,\cdots,N_k)\ll q^{-1+\eps}(N_1\cdots N_k)^{\frac12}$, since both $O''_{\epsilon,\alpha,\beta}$ and $\mathcal{ M}_{\alpha,\beta}$ satisfy trivially such a bound.
 Thus, by Lemma~\ref{smtmt} (and adding the condition $N_1\cdots N_k\ll q^{k+\eps}$ at a negligible cost) we have 
\est{
\mathcal O_{\balpha,\bbeta}&=\frac{i^{|\Upsilon|}}{2^{k}}\sum_{\epsilon\in\{\pm1\}^k}\rho_{\Upsilon}(\epsilon) \sumdagger_{\substack{N_1,\dots,N_k,\\ N_1\cdots N_k\ll q^{k+\eps}}}( \mathcal O_{\alpha,\beta}(N_1,\cdots,N_k)+X_{\bbeta,\balpha}\mathcal O_{-\bbeta,-\balpha}(N_1,\cdots,N_k))+O(1)\\
&=-(\mathscr D_{\balpha,\bbeta}+X_{\bbeta,\balpha}\mathscr D_{-\bbeta,-\balpha})+\sum_{\{\alpha_i',\beta_i'\}=\{\alpha_i,\beta_i\}}\mathcal M_{\balpha',\bbeta'}+\mathscr{ E}_{\balpha,\bbeta}+O(q^{\frac k2-\frac32+\iota_k+\eps}),
}
where 
\est{
\mathscr{ E}_{\balpha,\bbeta}\ll\max_{\substack{N_1,\dots,N_k,\\ N_1\cdots N_k\ll q^{k+\eps}}}\pr{\min(\mathscr{ E}_{1},\mathscr{ E}_{2},\mathscr{ E}_{3})}.
}
Thus, since the term $-(\mathscr D_{\balpha,\bbeta}+X_{\bbeta,\balpha}\mathscr D_{-\bbeta,-\balpha})$ cancels out with the main term of the diagonal term given by~\eqref{dteq}, to conclude the proof of Theorem~\ref{mtws} we just need to show that $\mathscr{ E}_{\balpha,\bbeta}\ll q^{\frac k2-1-\delta_k+\eps}$. Writing $N_{\tn{max}}=q^{a}$ and $N_1\cdots N_k=q^{b}$  (and considering only the contribution from the first summand in~\eqref{gdfsc}, since it easy to see the other terms produce a contribution which is $O(q^{\frac k2-\frac32+\eps})$), we have that it is sufficient to show that
\est{
\max_{i=1,2,3}\max_{\substack{ a\leq b\leq k,\\ ka\geq b}}\min(\tfrac{k+1}2a-\tfrac b 2-1+\vartheta,L_i(a,b),\tfrac b2-1)=\tfrac k2-\tfrac 32+\tfrac {3(3+2\vartheta)}{2 (2 k+5)}=\tfrac k2-1-\delta_k,
}
for $k\geq3$, 
where
\est{
L_1(a,b):=\tfrac34-\tfrac a2+(b-a)(\tfrac12-\tfrac{1}{2(k-1)}),\quad L_2(a,b):=(b-a)(\tfrac12-\tfrac{1}{2(k-1)}),\quad L_3(a,b):=\tfrac b2-\tfrac 34a.
}
If the maximum is attained at the interior of $\{a\leq b\leq k, ka\geq b\}$, then it must occur when $\frac{k+1}2a-\frac b 2-1=\frac b2-1=L_i(a,b)$ for $i=1,2,$ or $3$ and so it would be $\frac {7k}{20}-\frac{13}{20}$, $\frac k3-\frac 23$ and $\frac k3-\frac 23$ respectively.
%
%
%
%
%
Along the lines $a=b$, $ka=b$ and  $b=k$ we have
\est{
&\max_{i=1,2,3}\max_{\substack{0\leq a \leq k}}\min(\tfrac{k}2a-1+\vartheta,L_i(a,a),\tfrac a2-1)=\max_{\substack{0\leq a \leq k}}\min(L_i(a,a),\tfrac a2-1)=0,\\
&\max_{i=1,2,3}\max_{\substack{ 0\leq a\leq 1}}\min(\tfrac a2-1+\vartheta,L_i(a,ka),\tfrac {ka}2-1)\leq-\tfrac12+\vartheta\leq0\\
&\max_{i=1,2,3}\max_{\substack{ 1\leq a\leq  k}}\min(\tfrac{k+1}2a-\tfrac k 2-1+\vartheta,L_i(a,k),\tfrac k2-1)\\
&\qquad=\max(\tfrac k2-\tfrac 74+\tfrac{8 k (2+\vartheta)-19-12 \vartheta}{4 (k^2 +2k-4)},\tfrac k2-\tfrac 32+\tfrac{2 (k+\vartheta)-5-4\vartheta}{2 (k^2+k-3)},\tfrac k2-\tfrac 32+\tfrac {3(3+2\vartheta)}{2 (2 k+5)})=\tfrac k2-\tfrac 32+\tfrac {3(3+2\vartheta)}{2 (2 k+5)},\\
}
for $k\geq3$ and $\vartheta\leq\frac13$.
\comment{
To compute the last one, observe that the first term and the $L_i$ are increasing and decreasing respectively and they reverse order on the borders. Thus for each of the $L_i$ the min occurs at the intersection provided that the value at such a point is less than $\frac k2-1$.

Also, notice that for $\vartheta>\frac13$ and $k=3$ the min for the min with $L_1$ and the one for $L_3$ are both $\frac k2-1$.

For the final step, for integers$\geq3$, third greater than second if $k>3$ and equal if $k=3$. Third always greater than second.
}%
Theorem~\ref{mtws} then follows. 
\end{proof}
\section{The diagonal terms}\label{dts}
In this section we prove Lemma~\ref{asaca} deducing it from the following Lemma in~\cite{Bet}. We recall that in Section~\ref{assumptions} we assumed $|\alpha_i|,|\beta_i|<\frac{\eps}{2k}$ for all $i=1,\dots,k$.
\begin{lemma}
For $\Re(s)>1-\frac1k-\frac1k\sum_{i=1}^k\min(\Re(\alpha_i),\Re(\beta_i))$, let 
\est{
\mathcal W_{\alpha,\beta}(s)&:=\frac{i^{|\Upsilon|}}{2^{k}}\sum_{\epsilon\in\{\pm1\}^k}\rho_\Upsilon(\epsilon)\sum_{\substack{\pm_1n_1+\cdots +\pm_kn_k=0}} \frac{\tau_{\alpha_1,\beta_1}(n_1)\cdots \tau_{\alpha_k,\beta_k}(n_k)}{(n_1\cdots n_k)^{s}}.\\
}
Also, let
\est{
\mathcal W^{\dagger}_{\alpha,\beta}(s)&:=\sum_{(\mathcal I,\balpha',\bbeta')\in S_{\balpha,\bbeta}}\frac{2^{|\mathcal J|+1}\pi^{\frac{|\mathcal I|}2-1}}{ |\mathcal  I|(s-1)+s_{\mathcal I;\balpha'}+1}\pr{ \prod_{i\in \mathcal  I}\zeta(1-\alpha_i'+\beta_i')\frac{\Gamma_i(-\frac{\alpha'_i}2+\frac{1+s_{\mathcal I;\balpha'}}{2|\mathcal I|})}{\Gamma_i(\frac12+\frac{\alpha'_i}2-\frac{1+s_{\mathcal I;\balpha'}}{2|\mathcal I|})}} 
\times\\
&\quad\times\sum_{\ell\geq1} \sumstar_{h\mod \ell} \frac1{\ell^{|\mathcal I'|-\sum_{i\in\mathcal I}(\alpha'_i-\beta'_i)}} \prod_{i\notin \mathcal I}D_i(1+\alpha'_i-\tfrac{1+s_{\mathcal I;\balpha'}}{|\mathcal I|},\alpha'_i-\beta'_i,\tfrac {h}{\ell }).\\
}
where $s_{\mathcal I;\balpha'}:=\sum_{i\in \mathcal I}\alpha_i'$ and 
\est{
S_{\balpha,\bbeta}:=\left\{(\mathcal I,\balpha',\bbeta')\left|
\begin{aligned}
&\mathcal I\subseteq\{1,\dots k\},\ |\mathcal I|>|\mathcal J|+1,\ |\mathcal I\cap \Upsilon|\tn{ even},\\ 
& \{\alpha_i',\beta_i'\}=\{\alpha_i,\beta_i\}\ \forall i\in \mathcal I,\ (\alpha_i',\beta_i')=(\alpha_i,\beta_i)\ \forall i\notin\mathcal I
\end{aligned}
\right. \right\}.
}
Then for any $\eps>0$ $\mathcal W_{\alpha,\beta}(s)-\mathcal W^{\dagger}_{\alpha,\beta}(s)$ extends to an holomorphic function on $\Re(s)\geq 1-\tfrac{2-4\eps}{k+1}$ and in such half plane it satisfies
$\mathcal W_{\alpha,\beta}(s)-\mathcal W^{\dagger}_{\alpha,\beta}(s) \ll (\frac k\eps(1+|s|))^{Ak}$.
\end{lemma}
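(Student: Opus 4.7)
My plan is to follow the strategy of~\cite{Bet}, adapting it to the shifted setting. The starting point is to detect the diagonal condition via the $\delta$-method (or equivalently a smooth Farey dissection). After a smooth partition of unity breaking each variable into dyadic ranges, one writes
\est{
\mathbf 1[\pm_1 n_1+\cdots+\pm_k n_k=0]
=\sum_{\ell\geq 1}\frac{1}{\ell}\sumstar_{h\mod \ell} \e{\tfrac{h(\pm_1n_1+\cdots+\pm_kn_k)}{\ell}}\,w\pr{\tfrac{\pm_1n_1+\cdots+\pm_kn_k}{\ell}},
}
for some smooth weight $w$ which is identically $1$ at the origin. Substituting this identity into $\mathcal W_{\balpha,\bbeta}(s)$, unfolding the weight $w$ via Mellin inversion, and recognising the inner sums over $n_i$ as values of the Estermann function $D_{i;\alpha_i,\beta_i}(s,\pm_i h/\ell)$ (or its sine/cosine variants), the series is rewritten in the form $\sum_\ell \ell^{-1}\sumstar_{h} \prod_i D_i(\cdot,\pm_i h/\ell)$, at least formally.

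The bulk of the work is then to convert this representation, which is convergent only for $\Re(s)>1-\frac1k$, into $\mathcal W^\dagger_{\balpha,\bbeta}(s)$ plus an absolutely convergent remainder on a larger half-plane. This is achieved by choosing a subset $\mathcal I\subseteq\{1,\dots,k\}$ and applying the functional equation~\eqref{feest} to the factors $D_i$ with $i\in\mathcal I$. Each application swaps the role of $\alpha_i$ and $\beta_i$ (which produces the $\{\alpha_i',\beta_i'\}=\{\alpha_i,\beta_i\}$ choices in $S_{\balpha,\bbeta}$) and contributes a ratio of $\Gamma_i$-factors together with a sign. After this dualisation the sum over the $\mathcal I$-variables becomes a (shifted) Dirichlet series whose only singularity on the right comes from a pole of $\zeta$, located precisely at $|\mathcal I|(s-1)+s_{\mathcal I;\balpha'}+1=0$; its residue is the main contribution, giving the coefficient $\frac{2^{|\mathcal J|+1}\pi^{|\mathcal I|/2-1}}{|\mathcal I|(s-1)+s_{\mathcal I;\balpha'}+1}$ and the factors $\zeta(1-\alpha_i'+\beta_i')$ and the $\Gamma_i$-ratios appearing in $\mathcal W^\dagger_{\balpha,\bbeta}(s)$. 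The remaining Estermann factors at $h/\ell$ for $i\notin\mathcal I$ are exactly those in the definition of $\mathcal W^\dagger$.

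The main obstacle is to control the ``continuous'' part of the Voronoi expansion and the contributions from subsets $\mathcal I$ with $|\mathcal I|\leq |\mathcal J|+1$. Here the condition $|\mathcal I|>|\mathcal J|+1$ in the definition of $S_{\balpha,\bbeta}$ is dictated by the exponent in the sum over $\ell$: after the $\mathcal I$-dualisation the external sum over $\ell$ has $\ell$ to a power larger than $1$ only when this inequality holds, and Weil's bound for the resulting Kloosterman-type sums over $h\mod\ell$ then delivers absolute convergence in $\Re(s)\geq 1-\frac{2-4\eps}{k+1}$, together with the polynomial bound $\ll(k(1+|s|)/\eps)^{Ak}$. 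The shifts $\alpha_i,\beta_i$ play no essential role in this analysis because they are bounded by $\eps/(2k)$; one has only to keep track of how the $\Gamma$-factors from the functional equation~\eqref{feest}, estimated via Stirling in the form~\eqref{cbg}, interact with the polynomial dependence on $k$. The details are carried out in~\cite{Bet} in the unshifted case $\balpha=\bbeta=0$, and the extension to the present setting is routine given the assumption $|\alpha_i|,|\beta_i|<\frac\eps{2k}$.
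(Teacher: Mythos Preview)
Your outline is in the right spirit and roughly matches what the companion paper~\cite{Bet} does for a \emph{fixed} sign pattern~$\epsilon$: express the constraint via additive characters, recognise Estermann functions, and push the continuation by picking up the polar contributions from subsets~$\mathcal I$ (with Weil's bound controlling the tail in~$\ell$). The paper's own proof of this lemma is in fact nothing more than an appeal to Theorem~3 of~\cite{Bet} for each~$\epsilon$, followed by the sum over~$\epsilon$ with weight~$\rho_\Upsilon(\epsilon)$.

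What your sketch does not address, and what is the only genuinely new content in the paper's proof, is how the $\Upsilon$-structure of $\mathcal W^\dagger$ emerges: the ratios $\Gamma_i(\cdot)/\Gamma_i(\cdot)$ (with $\Gamma_i$ depending on whether $i\in\Upsilon$) and, crucially, the parity condition $|\mathcal I\cap\Upsilon|$ even in the definition of $S_{\balpha,\bbeta}$. These do \emph{not} come from the functional equation~\eqref{feest} of $D$, which only produces ordinary $\Gamma$-factors and knows nothing about~$\Upsilon$. They arise only after one sums the per-$\epsilon$ polar terms over $\epsilon\in\{\pm1\}^k$ with the weight $\rho_\Upsilon(\epsilon)$; this sum is a trigonometric/$\Gamma$-function identity, carried out in the paper as Lemma~\ref{gfar} (and alluded to in Remark~2 of~\cite{Bet}). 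Equivalently, one could first execute the $\epsilon$-sum to pass to $D_{\cos}/D_{\sin}$ and then use their functional equations~\eqref{afce}, which do involve~$\Gamma_i$. Either way, this step is what produces the clean form of $\mathcal W^\dagger$, and without it your argument yields a polar term of a different (unsimplified) shape and does not explain why only subsets with $|\mathcal I\cap\Upsilon|$ even survive.
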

\begin{proof}
Theorem~3 of~\cite{Bet} gives the meromorphic continuation and the bound for each $\epsilon$. Thus, one obtains the Lemma by summing over $\epsilon$ (for the simplification of the polar term one proceeds as in Lemma~\ref{gfar}; cf. also Remark~2 of~\cite{Bet}).
\end{proof}
\begin{proof}[Proof of Lemma~\ref{asaca}]
Writing $V_{\balpha,\bbeta}$ in terms of it's Mellin transform we have
\est{
\mathcal D_{\balpha,\bbeta}=\frac1{2\pi i}\int_{(2)}G_{\balpha,\bbeta}(s)g_{\balpha,\bbeta}(s)\mathcal W_{\alpha,\beta}(\tfrac12+s) q^{ks}\frac{ds} s.
}
We write $\mathcal W_{\alpha,\beta}(\tfrac12+s)$ as $\mathcal W^\dagger_{\alpha,\beta}(\tfrac12+s)+(\mathcal W_{\alpha,\beta}(\tfrac12+s)-\mathcal W^\dagger_{\alpha,\beta}(\tfrac12+s))$. For the second term we move the line of integration to $\Re(s)=\frac12-\tfrac{2-4\eps}{k+1}$ and bound trivially using~\eqref{boundforG} obtaining an error of size $O(k^{Ak}q^{\frac k2-\frac{2k}{k+1}+\eps})=O(q^{\frac k2-\frac{2k}{k+1}+\eps})$. For the first term we move the line of integration to $\Re(s)=-\frac12$ picking up the residues from the poles. We obtain
\es{\label{gcfa}
\mathcal D_{\balpha,\bbeta}=\sum_{\substack{\mathcal I\cup \mathcal J=\{1,\dots,k\},\ \mathcal I\cap\mathcal J=\emptyset,\\|\mathcal I|>|\mathcal J|+1,\ |\mathcal I \cap\Upsilon|\tn{ even}}}\sum_{\substack{\{\alpha_i^\prime,\beta_i^\prime\}=\{\alpha_i,\beta_i\}\ \forall i\in\mathcal I,\\ (\alpha_j^\prime,\beta_j^\prime)=(\alpha_j,\beta_j)\ \forall j\in\mathcal J}}\mathscr D_{\mathcal I;\balpha',\bbeta'}+O(q^{\frac k2-\frac{2k}{k+1}+\eps}),
}
where  
\es{\label{mtmt}
\mathscr D_{\mathcal I;\balpha,\bbeta}&=2^k\frac{G_{\balpha,\bbeta}(s_{\mathcal I;\balpha} )}{s_{\mathcal I;\balpha}\pi |\mathcal  I|}g_{\balpha,\bbeta}(s_{\mathcal I;\balpha})q^{ks_{\mathcal I;\balpha}}\pr{ \prod_{i\in \mathcal  I}\frac{\pi^{\frac12}\zeta(1-\alpha_i+\beta_i)}{2^{\frac12+\alpha_i+s_{\mathcal I,\balpha}}}\frac{\Gamma_i(\frac14-\frac{\alpha_i+s_{\mathcal I,\balpha}}{2})}{\Gamma_i(\frac14+\frac{\alpha_i+s_{\mathcal I,\balpha}}{2})}} \times\\
&\quad\times\sum_\ell\sumstar_{h\mod \ell} \frac1{\ell^{|\mathcal I|-\sum_{i\in\mathcal I}(\alpha_i-\beta_i)}} \pr{\prod_{j\in \mathcal J}D_j(\tfrac 12+\alpha_j+s_{\mathcal I,\balpha},\alpha_j-\beta_j,\pm_j\tfrac {h}{\ell })}\\
}
with $s_{\mathcal I;\balpha}:=\sum_{i\in \mathcal I}\alpha_i$.
\end{proof}

\section{The terms close to the diagonal}\label{ttctd}
In this section we prove Lemma~\ref{ctd}. First, we assume that $N_1$ is the maximum of $N_1,\dots,N_k$, as we can do since both the main term and the error terms in Lemma~\ref{ctd} are symmetric in the indexes. Moreover, since we assumed that $|\Upsilon|$ is even, then we have
\est{
\sum_{\epsilon\in\{\pm1\}^k}\rho_{\Upsilon}(\epsilon)\mathcal O''_{\epsilon,\alpha,\beta}=2\sum_{\substack{\epsilon\in\{\pm1\}^k,\\\pm_11=-1}}\rho_{\Upsilon}(\epsilon)\mathcal O''_{\epsilon,\balpha,\bbeta},
}
where here and in the following $\epsilon=(\pm_11,\pm_21,\dots,\pm_k1)$. 
We split $\mathcal O''_{\epsilon,\alpha,\beta}$ further, depending on the sign and the size of $\pm_*f:=-n_1\pm_2n_2\pm_3\cdots \pm_k n_k$ (with $f>0$), introducing another partition of unity controlling the size of $f$:
\es{\label{iml}
\sum_{\epsilon\in\{\pm1\}^k}\rho_{\Upsilon}(\epsilon)\mathcal O''_{\epsilon,\balpha,\bbeta}=2\sumdagger_{N_*\ll kN_1q^{\eps/k}}\sum_{\substack{\epsilon\in\{\pm1\}^{k},\\\pm_1=-1}}\rho_{\Upsilon}(\epsilon)\sum_{\pm_*1\in\{\pm1\}}\mathcal K_{\epsilon,\pm_*,\balpha,\bbeta},
}
where
\est{
\mathcal K_{\epsilon,\pm_*;\balpha,\bbeta}&:=\sum_{d|q}d\,\frac{\mu(\frac qd)}{\varphi(q)}\sum_{\substack{f\geq1,\\ f\equiv 0\mod d}}\,
\sum_{\substack{n_1,\dots,n_k\geq1,\\ n_1=\pm_2n_2\pm_3\cdots\pm_k n_k\pm_*f}}
\hspace{-0.5em}\frac{\tau_{\alpha_1,\beta_1}(n_1)\cdots \tau_{\alpha_k,\beta_k}(n_k)}{(n_1\cdots n_k)^{\frac12}}\times\\
&\quad\times V_{\balpha,\bbeta}\pr{\frac{n_1\cdots n_k}{q^k}}P\pr{\frac{n_1}{N_1}}\cdots P\pr{\frac{n_k}{N_k}}P\pr{\frac{f}{N_*}}.\\
}
Notice that in~\eqref{iml} we truncated the sum over $N_*$ at $N_*\ll kN_1q^{\eps/k}$, as we clearly could.

\subsection{Separating the variables arithmetically}
We wish to separate the variables in $\tau_{\alpha_1,\beta_1}(n_1)=\tau_{\alpha_1,\beta_1}(\pm_2n_2\pm_3\cdots\pm_kn_k\pm_*f)$. One can achieve this goal by using Ramanujan's identity 
\es{\label{hga}
\tau_{a,b}(n)=n^{-a}\tau_{0,b-a}(n)=n^{-a}\zeta(1-a+b)\sum_{\ell=1}^{\infty}\frac{c_\ell(n)}{\ell^{1-a+b}},
}
which holds for $n\neq0$ and $\Re(a-b)<0$. The coefficient ${c_\ell(n)}$ denotes the Ramanujan sum 
\est{
c_\ell(n):=\sumstar_{h\mod \ell}\e{\frac {nh}\ell}.
}
However, since~\eqref{hga} doesn't hold in a neighborhood of $a=b=0$, it is more convenient to follow Young's approach and use the following Lemma, which rephrase~\eqref{hga} as an approximate functional equation for $\tau_{a,b}(n)$.
\begin{lemma}
Let $n\in\Z_{>0}$ and let $a,b\in\C$. Then,
\es{\label{har}
\tau_{a,b}(n)=n^{-a}\sum_\ell\frac{c_\ell(n)}{\ell^{1-a+b}}\upsilon_{a-b}\pr{\frac{\ell^2}{n}}+n^{-b}\sum_\ell\frac{c_\ell(n)}{\ell^{1+a-b}}\upsilon_{b-a}\pr{\frac{\ell^2}{n}}
}
where
\est{
\upsilon_a\pr{x}=\int_{(c_w)}x^{-\frac w2}\zeta(1-a+w)\frac{G_{\balpha,\bbeta}(w)}{w}dw,
}
where $c_w>|\Re(a-b)|$ and $G_{\balpha,\bbeta}(w)$ is as defined in~\eqref{dfnG}.
\end{lemma}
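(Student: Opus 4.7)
The plan is to show the two sums on the right hand side of~\eqref{har} collapse to a residue at $w=0$ of a common integrand, after which invoking $G_{\balpha,\bbeta}(0)=1$ yields exactly $\tau_{a,b}(n)$. Concretely, I will write both terms as contour integrals along parallel lines on opposite sides of the origin, with the same integrand up to a sign, and apply the residue theorem.

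Call the two summands on the right hand side $T_1$ and $T_2$, so that $T_2$ is obtained from $T_1$ by swapping $a$ and $b$. For $T_1$, I substitute the Mellin representation of $\upsilon_{a-b}(\ell^2/n)$ and interchange the order of summation and integration. The choice $c_w>|\Re(a-b)|$ makes everything absolutely convergent on the line $\Re(w)=c_w$, and also places that line inside the half-plane $\Re(1-a+b+w)>1$, where Ramanujan's identity $\zeta(s)\sum_{\ell\geq 1}c_\ell(n)/\ell^{s}=\sigma_{1-s}(n)$ with $\sigma_s(n)=\sum_{d|n}d^s$ is valid. This simplifies $T_1$ to
\est{
T_1=\frac{1}{2\pi i}\int_{(c_w)}n^{-a+w/2}\,\sigma_{a-b-w}(n)\,\frac{G_{\balpha,\bbeta}(w)}{w}\,dw.
}
Running the same calculation for $T_2$ produces a similar expression with $a,b$ swapped, which after the elementary reflection $\sigma_s(n)=n^s\sigma_{-s}(n)$ rewrites as $\frac{1}{2\pi i}\int_{(c_w)}n^{-a-w/2}\sigma_{a-b+w}(n)\frac{G_{\balpha,\bbeta}(w)}{w}\,dw$.

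The key step is to bring $T_2$ onto the opposite contour so that it matches $-T_1$ up to the sides of the pole at $w=0$. Substituting $w\mapsto -w$ in the $T_2$ integral sends the contour $(c_w)$ to $(-c_w)$, and the crucial evenness $G_{\balpha,\bbeta}(-w)=G_{\balpha,\bbeta}(w)$ (which holds because $Q_{\balpha,\bbeta}(s)$ depends on $s^2$ and the Riemann $\xi$-function satisfies $\xi(\tfrac12-w)=\xi(\tfrac12+w)$) combined with the sign produced by $1/(-w)$ gives
\est{
T_2=-\frac{1}{2\pi i}\int_{(-c_w)}n^{-a+w/2}\,\sigma_{a-b-w}(n)\,\frac{G_{\balpha,\bbeta}(w)}{w}\,dw.
}
Therefore
\est{
T_1+T_2=\frac{1}{2\pi i}\bigg(\int_{(c_w)}-\int_{(-c_w)}\bigg)n^{-a+w/2}\,\sigma_{a-b-w}(n)\,\frac{G_{\balpha,\bbeta}(w)}{w}\,dw.
}
In the strip $|\Re(w)|<c_w$ the integrand is meromorphic with its only pole at $w=0$ (a simple pole arising from $1/w$, since $\sigma_{a-b-w}(n)$ is entire for each fixed $n$). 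The rapid decay of $G_{\balpha,\bbeta}$ on vertical strips justifies closing the contour and applying the residue theorem, giving $T_1+T_2=\Res_{w=0}\bigl[n^{-a+w/2}\sigma_{a-b-w}(n)G_{\balpha,\bbeta}(w)/w\bigr]=n^{-a}\sigma_{a-b}(n)G_{\balpha,\bbeta}(0)=\tau_{a,b}(n)$, using $G_{\balpha,\bbeta}(0)=1$ and $n^{-a}\sigma_{a-b}(n)=\tau_{a,b}(n)$.

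The main ingredients are classical: Ramanujan's identity, the parity of $G_{\balpha,\bbeta}$, and the relation $\sigma_s(n)=n^s\sigma_{-s}(n)$. There is no genuine obstacle; the only step requiring care is getting the signs right in the reflection $w\mapsto -w$ used on $T_2$, and checking that the admissible range $c_w>|\Re(a-b)|$ is exactly what is needed both for absolute convergence (so the interchange is legal) and to apply Ramanujan's identity termwise on the contour.
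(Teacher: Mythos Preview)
Your proof is correct and complete; it is precisely the standard approximate functional equation argument that underlies Young's Lemma~5.4, which is all the paper invokes here. The only point worth flagging is cosmetic: the evenness you use, $G_{\balpha,\bbeta}(-w)=G_{\balpha,\bbeta}(w)$, is indeed immediate from the definition (both $Q_{\balpha,\bbeta}$ and $\xi(\tfrac12+\cdot)$ are even), though the paper only records the weaker-looking relation $G_{\balpha,\bbeta}(-s)=G_{-\balpha,-\bbeta}(s)$ --- these coincide since $Q_{\balpha,\bbeta}=Q_{-\balpha,-\bbeta}$.
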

\begin{proof}
See Lemma 5.4 of Young~\cite{You11b}.
\end{proof}

Applying~\eqref{har} and splitting the resulting sum over $\ell$ using another partition of unity (and adding the restriction $L\geq\frac12$ as we can do since $P$ is supported on $[1,2]$), we rewrite  $\mathcal K_{\epsilon,\pm_*;\balpha,\bbeta}$ as
\es{\label{hgfd}
 \mathcal K_{\epsilon,\pm_*;\balpha,\bbeta}=\sum_{\substack{\{\alpha_1^\prime,\beta_1^\prime\}=\{\alpha_1,\beta_1\},\\ (\alpha_j^\prime,\beta_j^\prime)=(\alpha_j,\beta_j)\ \forall j\neq 1}} \sumdagger_{L\geq\frac12} \mathcal L_{\balpha^\prime,\bbeta^\prime},
}
where $\balpha^\prime:=(\alpha_1^\prime,\dots,\alpha_k^\prime)$, $\bbeta^\prime:=(\beta_1^\prime,\dots,\beta_k^\prime)$ and 
\est{
\mathcal L_{\balpha,\bbeta}&:=\sum_{\substack{n_1,\dots,n_k,f\geq1,\\n_1=\pm_2n_2\pm_3\cdots\pm_kn_k\pm_*f}}\sum_\ell\sumstar_{h\mod \ell}\frac{c_\ell \pr{\pm_2n_2\pm_3\cdots\pm_kn_k\pm_*f}}{\ell^{1-\alpha+\beta}}\upsilon_{\alpha_1-\beta_1}\pr{\frac {\ell^2 }{n_1}}\times\\
&\quad\times\frac{\sigma_{\alpha_2-\beta_2}(n_2)\cdots \sigma_{\alpha_k-\beta_k}(n_k)}{n_1^{\frac12+\alpha_1+s}\cdots n_{k}^{\frac12+\alpha_k+s}}V_{\balpha,\bbeta}\pr{\frac{n_1\cdots n_k}{q^k}}P\pr{\frac{n_1}{N_1}}\cdots P\pr{\frac{n_k}{N_k}}P\pr{\frac{f}{N_*}}P\pr{\frac {\ell}L}.\\
}
Notice that we have omitted to indicate the dependency of $\mathcal L_{\balpha,\bbeta}$ from $\epsilon$ and $\pm_*$ in order to save notation.

Expressing $P$, $\upsilon_{\alpha_1-\beta_1}$ and $V$ in terms of their Mellin transform and making the change of variables $u_i\rightarrow u_i-s$, for $i=1,\dots,k$, we see that $\mathcal L_{\balpha,\bbeta}$ can be written as
\es{\label{jhk}
\mathcal L_{\balpha,\bbeta}&=\sum_{d|q}\frac{\mu(\frac qd)d}{\varphi(q)}\sum_{\substack{n_1,\dots,n_k,f\geq1,\ d|f,\\\pm_2n_2\pm_3\cdots\pm_kn_k\pm_*f>0}}\sum_\ell\sumstar_{h\mod \ell}\frac1{(2\pi i)^{k+3}}\int_{(\substack{c_s,c_w,\\\bcu,c_{u_*}})}\frac{N_*^{u_*}}{f^{u_*}} P\pr{\frac \ell L}\\
&\quad\times \frac{N_1^{u_1-s}\cdots N_k^{u_k-s}}{\ell^{1-\alpha_1+\beta_1+w}}\frac{\tau_{\alpha_2,\beta_2}(n_2)\cdots \tau_{\alpha_k,\beta_k}(n_k)c_\ell \pr{\pm_2n_2\pm_3\cdots\pm_kn_k\pm_*f}}{(\pm_2n_2\pm_3\cdots\pm_kn_k\pm_*f)^{\frac12+\alpha_1+u_1-\frac w2}n_2^{\frac12+u_2}\cdots n_{k}^{\frac12+u_k}}\\
&\quad \times\tilde P(u_*) \tilde P(u_1-s)\cdots  \tilde P(u_k-s)q^{ks}\frac{H_{\balpha,\bbeta}(w,s)}{ws} dwds\bdu du_*,
}
where $\bdu:=du_1\cdots du_k$, $\bcu$ denotes the lines of integration $c_{u_1},\dots, c_{u_k}$ and
\est{
H_{\balpha,\bbeta}(w,s):=\zeta(1+w-\alpha_1+\beta_1)G_{\balpha,\bbeta}(s)G_{\balpha,\bbeta}(w)g_{\balpha,\bbeta}(s).
}
Notice that, by the definitions~\eqref{dfnG} and~\eqref{dfngs} of $G_{\balpha,\bbeta}(s)$ and $g_{\balpha,\bbeta}(s)$, $H_{\balpha,\bbeta}(w,s)$ is entire and decays rapidly in both variables $w$ and $s$:
\es{\label{bfdsh}
H_{\balpha,\bbeta}(w,s)\ll  e^{-C_2(|\Im(s)|+|\Im(w)|}(1+|\Re(s)|+|\Re(w)|)^{A(|\Re(s)|+|\Re(w)|+k)},
}
for some $C_2>0$. As lines of integration, we take
\est{
c_s:=\eps/k,\quad c_{u_1} =-3k-\frac12-\alpha_1+7\eps, \quad c_{u_*}=c_{u_2}=\cdots=c_{u_k}=4k, \quad c_{w}=10\eps.
}
\subsection{Separating the variables analytically}
To complete the separation of the variables, we need also to deal with the factor $(\pm_2n_2\pm_3\cdots\pm_kn_k\pm_*f)^{\frac12+\alpha_1+u_1-\frac w2}$ in~\eqref{jhk}. In order to do so, we use Lemma~\ref{sml}, in Section~\ref{amell}.
We apply the lemma with $\kappa:=k+1$, $B:=3k$ and $v_1=\frac12-\alpha_1-u_1+\frac w2$, so that $\Re(v_1)=B+1-2\eps$. 
We get
\es{\label{ddrR}
\mathcal L_{\balpha,\bbeta}&=\sum_\nu
\frac{B!}{\nu_2!\cdots \nu_k!\nu_*!} \pr{\mathcal N_{\nu;\balpha,\bbeta}+\mathcal N'_{\nu;\balpha,\bbeta}}
}
where the sum is over $\nu=(\nu_2,\dots,\nu_k,\nu_*)\in\Z_{\geq0}^k$ satisfying
\est{
\nu_2+\cdots+\nu_k+\nu_*=B,\qquad \nu_{i}=0\text{ if $\pm_i=-1$},\qquad\nu_{*}=0\text{ if $\pm_*=-1$}
}
and $\mathcal N_{\nu;\balpha,\bbeta}$ is defined by
\es{\label{adfq}
\mathcal N_{\nu;\balpha,\bbeta}&:=\sum_{d|q}\frac{\mu(\frac qd)d}{\varphi(q)}\sum_{\substack{n_1,\dots,n_k,\ell\geq1,\\ f\geq1,\ d|f}}\frac{P(\ell/L)}{(2\pi i)^{2k+3}}\int_{(\substack{c_s,c_w,\bcu,\\c_{u_*},c_{v_*}})}\hspace{-0.9em}\frac{c_\ell \pr{\pm_2n_2\pm_3\cdots\pm_kn_k\pm_*f}}{f^{v_*+u_*-\nu_*}}N_*^{u_*} \\
&\hspace{-0.em}\quad\times \frac{q^{ks}N_1^{u_1-s}}{\ell^{1-\alpha_1+\beta_1+w}}\tilde P(u_*) \tilde P(u_1-s) \bigg(\prod_{i=2}^k\int_{(c_{v_i})}\frac{\tau_{\alpha_i,\beta_i}(n_i)N_i^{u_i-s}}{n_{i}^{\frac12+u_i+v_i-\nu_i}}\tilde P(u_i-s)\bigg)\\
&\hspace{-0.em}\quad\times \Psi_{\epsilon^*,B}\pr{\tfrac12-\alpha_1-u_1+\tfrac w2,\bv,v_*} \frac{H_{\balpha,\bbeta}(w,s)}{ws}dsdw\bdu \bdv du_*dv_*,
}
with $c_{v_2}=\cdots=c_{v_k}=c_{v_*}=\eps/k$, and $\mathcal N'_{\nu;\balpha,\bbeta}$ is defined in the same way with lines of integrations $c'_{v_2}=\cdots=c'_{v_k}=c'_{v_*}=\frac12$ in place of $c_{v_2},\dots,c_{v_k},c_{v_*}$. Also, in~\eqref{adfq} we used the notation $\bv:=(v_2,\dots,v_k)$, $\bdv:=dv_2\cdots dv_{k}$ and $\epsilon^*:=(\pm_11,\dots,\pm_k1,\pm_*1)$.

 The contribution of $\mathcal N'_{\nu;\balpha,\bbeta}$ can be bounded by moving the lines of integration $c_{u_i}$ to $c_{u_i}=2\eps+\nu_i$ for $i=2,\dots,k$ and $c_{u_*}$ to $c_{u_*}=\frac12+\nu_*+\eps$ and bounding trivially. We obtain
 \est{
\mathcal N'_{\nu;\balpha,\bbeta}&\ll q^{-1+\eps}N_1^{-B-\frac12+A\eps}N_*^{\frac12+\nu_*+2\eps}N_2^{\nu_2+2\eps}\cdots N_k^{\nu_k+2\eps}L^{-\eps}\vspace{10cm}
}
and thus
 \est{
\sum_\nu
\frac{B!}{\nu_2!\cdots \nu_k!\nu_*!}\mathcal N'_{\nu;\balpha,\bbeta}
&\ll  q^{-1+A\eps}N_1^{A\eps}L^{-\eps},
}
since $N_1$ is the maximum among $N_1,\dots,N_k$ and $N_*\ll kN_1q^{\eps/k}$.

Next, we open the Ramanujan sum in~\eqref{adfq} and we execute the sums over $n_2,\dots,n_k,f$ as we can do since the integrals and sums are absolutely convergent. We obtain 
\est{
\mathcal N_{\nu;\balpha,\bbeta}&=\sum_{d|q}\frac{\mu(\frac qd)}{\varphi(q)}\sum_\ell\frac{P(\ell/L)}{(2\pi i)^{2k+3}}\int_{(\substack{c_s,c_w,\bcu,\\ c_{u_*},c_{v_*}})} \sumstar_{h\mod \ell}\frac{d^{1-v_*-u_*+\nu_*}}{\ell^{1-\alpha_1+\beta_1+w}} F(v_*+u_*-\nu_*,\pm_*\tfrac{dh}\ell)q^{ks}\\
&\hspace{-0.em}\quad \times N_*^{u_*} \tilde P(u_*) \bigg(\prod_{i=2}^k\int_{(c_{v_i})}D_{\alpha_i,\beta_i}\pr{\tfrac12+u_i+v_i-\nu_i,\tfrac{\pm_i h}{\ell}}\tilde P(u_i-s)N_i^{u_i-s}\bigg)\\
&\hspace{-0.em}\quad\times N_1^{u_1-s}\tilde P(u_1-s) \Psi_{\epsilon^*,B}\pr{\tfrac12-\alpha_1-u_1+\tfrac w2,\bv,v_*} \frac{H_{\balpha,\bbeta}(w,s)}{ws}dsdw\bdu\bdv du_*dv_*,\\
}
where, after moving the lines of integration $c_{u_2},\dots,c_{u_k},c_{u_*}$, we have
\es{\label{fcoli}
&c_s:=\eps/k,\qquad c_{u_1} =-B-\tfrac12-\Re(\alpha_1)+7\eps, \qquad c_{w}=10\eps,\\
&c_{v_2}=\cdots=c_{v_k}=c_{v_*}=\eps/k,\qquad c_{u_*}=1+\nu_*+2\eps-\eps/k
}
and $c_{u_i}=\tfrac12+\nu_i+\eps/k$, for $i=2,\dots,k$.

\begin{remark}
Thanks to~\eqref{bfdsh} and to Lemma~\ref{bfp} in Section~\ref{amell}, the integrals in $\mathcal N_{\nu;\balpha,\bbeta}$ are all absolutely convergent when the line of integration are chosen so that $\Re(v_2)=\cdots=\Re(v_k)=\Re(v_*)=\eps/k$ and $\Re(v_1):=\Re(\frac12-\alpha_1-u_1+\frac w2)=B+1-2\eps$ (and even if an extra factor of $\prod_{i=2}^k(1+|u_i|+|v_i|)^{1+4\eps}$ is introduced inside the integrals, as will be relevant later on in the argument). In the following computations, until Remark~\ref{acep}, we will (almost) always arrange the lines of integration in a way such that $\Re(v_1)$ is kept equal to $B+1-2\eps$.\footnote{The only exception is in the proof of~\eqref{abfaw}, where we need to take $\Re(v_1)=B-2\eps$. One can easily verify however that the integrals are all absolutely convergent also in that case.} This ensures the absolute convergence of the integrals in all the bounds we give.
\end{remark}
\begin{remark}
We also observe that, by the definition~\eqref{dfgfs}, the poles of $\Psi_{\epsilon^*,B}\pr{v_1,\bv,v_*}$ are contained in the set 
\est{
\left\{(v_1,\bv,v_*)\in\C^{k+1}\mid v_i\in\Z_{\leq0}\tn{ for some $i\in\{1,\dots,k\}$ or }v_1+\dots+v_k+v_*= B+1\right\}.
}
\end{remark}

\subsection{Picking up the residues of the Estermann function}

For each $i=2,\dots,k$ we move the line of integration $c_{u_i}$ to $c_{u_i}=-\frac12+\nu_i-2\eps,$
passing through the poles of the Estermann function at $u_i=\frac12-\alpha_i-v_i+\nu_i$ and $u_i=\frac12-\beta_i-v_i+\nu_i$. By Lemma~\ref{properties of Estermann} and the residue theorem, we obtain
\es{\label{sdga}
\mathcal N_{\nu;\balpha,\bbeta}&=\sum_{\substack{I\cup J=\{2,\dots,k\},\\ I\cap J=\emptyset}}\sum_{\substack{\{\alpha_i^\prime,\beta_i^\prime\}=\{\alpha_i,\beta_i\}\ \forall i\in I,\\ (\alpha_j^\prime,\beta_j^\prime)=(\alpha_j,\beta_j)\ \forall j\in J\cup\{1\}}}\mathcal P_{I;\nu;\balpha^\prime,\bbeta^\prime},
}
where, for $I\cup J=\{2,\dots,k\}$, $I\cap J=\emptyset$,
\est{
\mathcal P_{I;\nu;\balpha,\bbeta}&:=\sum_{d|q}\frac{\mu(\frac qd)}{\varphi(q)}\sum_\ell \frac{P(\ell/L)}{(2\pi i)^{5}}\int_{(\substack{c_s,c_w,c_{u_1}\\ c_{u_*},c_{v_*}})} \frac{q^{ks}d^{1-v_*-u_*+\nu_*}}{\ell^{\sum_{i\in I\cup\{1\}}(1-\alpha_i+\beta_i)+w}} \sumstar_{h\mod \ell}F(v_*+u_*-\nu_*,\pm_*\tfrac{dh}\ell)\\
&\quad \times  \bigg(\prod_{j\in J}\frac1{(2\pi i)^2}\int_{(c_{u_j},c_{v_j})}D_{\alpha_j,\beta_j}\pr{\tfrac12+u_j+v_j-\nu_j,\tfrac{\pm_j h}{\ell}}\tilde P(u_j-s)N_j^{u_j-s}du_j\bigg)\\
&\quad\times \bigg(\prod_{i\in I}\frac1{2\pi i}\int_{(c_{v_i})}\tilde P(\tfrac 12-\alpha_i-v_i+\nu_i-s)N_i^{\frac 12-\alpha_i-v_i+\nu_i-s}\bigg)N_*^{u_*} \tilde P(u_*)N_1^{u_1-s}\tilde P(u_1-s)\\
&\quad\times  \Psi_{\epsilon^*,B}\pr{\tfrac12-\alpha_1-u_1+\tfrac w2,\bv,v_*} \frac{H^\prime_{I;\balpha,\bbeta}(w,s)}{ws}dsdwdu_1 \bdv du_*dv_{*}\\
}
and
\es{\label{hgfc2}
H^\prime_{I;\balpha,\bbeta}(w,s)&:=G_{\balpha,\bbeta}(s)G_{\balpha,\bbeta}(w)g_{\balpha,\bbeta}(s)\zeta(1+w-\alpha_1+\beta_1)\prod_{i\in I}\zeta(1-\alpha_i+\beta_i),
}
so that 
\es{\label{bfh2}
H^\prime_{I;\balpha,\bbeta}(w,s)\ll (A \log q)^{| I|} e^{-C_2(|\Im(s)|+|\Im(w)|}(1+|\Re(s)|+|\Re(w)|)^{A(|\Re(s)|+|\Re(w)|+k)}.
}
We remind also that the lines of integrations are given by~\eqref{fcoli} and
\es{\label{fcoli2}
c_{u_j}=-\frac12+\nu_j-2\eps,\ \forall j\in J.
}

\subsection{Applying the bounds on sums of Kloosterman sums}
In this section, we apply Lemma~\ref{DIbound} to give a bound for $\mathcal P_{I,\nu;\balpha,\bbeta}$ under certain conditions.
\begin{lemma}
Let  $I\subseteq\{2,\dots,k\}$ and let $J:=\{2,\dots,k\} \setminus I$. Then, if $|I|\leq |J|$ 
we have
 \es{\label{21w1}
 \mathcal P_{I,\nu;\balpha,\bbeta} &\ll q^{-1+A\eps}N_1^{A\eps} (q^\vartheta N_1^{\frac {k+1}2}+q^{\frac k2-\frac13+\frac\vartheta 3} N_1^{\frac12})(N_1\cdots N_k)^{-\frac12}L^{-\eps},
 }
whereas if $|I|>|J|$ and $\nu_j>0$ for some $j\in J$, then
 \es{\label{21w2}
 \mathcal P_{I,\nu;\balpha,\bbeta} &\ll  q^{-\frac56+\frac \vartheta3+A\eps} N_1^{-1+A\eps}(N_1\cdots N_k)^{\frac12}L^{-\eps}.\\
  }
\end{lemma}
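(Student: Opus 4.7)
The plan is to apply Lemma~\ref{DIbound} to the inner sum over $\ell$ and $h\bmod\ell$ appearing in the definition of $\mathcal P_{I,\nu;\balpha,\bbeta}$. Matching parameters, I would identify the Lemma's $w$ with $w':=|I|+w-\sum_{i\in I\cup\{1\}}(\alpha_i-\beta_i)$, so that $1/\ell^{1+w'}$ recovers the $\ell$-power in $\mathcal P$; take $\sigma=\eps/k$; and set $\eta_0=\pm_* d$, $\eta_j=\pm_j$. On the contours~\eqref{fcoli}--\eqref{fcoli2}, the real parts of $v_*+u_*-\nu_*$ and $\tfrac12+u_j+v_j-\nu_j$ are $1+O(\eps)$ and $O(\eps)$ respectively, approximately matching the $1+\sigma+it_0$ and $-\sigma+it_j$ of the Lemma, with small discrepancies absorbed into the $q^{A\eps}$ factor. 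I would then split the $d$-sum into $d=q$ and $d=1$: the coefficient $\mu(q/d)\,d^{1-v_*-u_*+\nu_*}/\varphi(q)$ has magnitude $\asymp q^{-1+O(\eps)}$ in both cases, but only the $d=q$ contribution receives the Kloosterman saving, namely $q^{\vartheta+\eps}$ when $L\geq q^{1/2}$ and $q^{\frac16+\frac\vartheta3+\eps}$ always.

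Next, I would integrate over the remaining complex variables. The exponential decay of $H'_{I;\balpha,\bbeta}$ from~\eqref{bfh2} and the super-polynomial decay of $\tilde P$ from~\eqref{fnn} dominate the polynomial growth of $K_{|J|}$ in Lemma~\ref{DIbound}, producing absolutely convergent integrals. The chosen real parts in~\eqref{fcoli}--\eqref{fcoli2} contribute, up to $q^{O(\eps)}$, the $N$-factor
\[
N_1^{-B-\tfrac12}\,N_*^{1+\nu_*}\prod_{i\in I} N_i^{\tfrac12+\nu_i}\prod_{j\in J} N_j^{-\tfrac12+\nu_j}.
\]
Using $N_*\leq N_1 q^{\eps/k}$, $N_i\leq N_1$, and the bound $N_*^{\nu_*}\prod_i N_i^{\nu_i}\leq N_1^B q^\eps$ (valid since $\sum=B$ and $k^{3k}\ll q^\eps$ under the standing assumption $k=o(\log q/\log\log q)$), the $N$-factor simplifies to $N_1^{\tfrac12}\prod_{i\in I} N_i^{\tfrac12}\prod_{j\in J} N_j^{-\tfrac12}\cdot q^{O(\eps)}$.

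In Case 1 ($|I|\leq|J|$), I would obtain the $L^{-\eps}$ factor by shifting the contour $c_w$ rightward from $10\eps$ to $|J|-|I|+C_0\eps$ for a sufficiently large absolute constant $C_0$. This shift remains to the right of the pole of $\zeta(1+w-\alpha_1+\beta_1)$ at $w=\alpha_1-\beta_1$ because $|\alpha_1-\beta_1|\ll 1/\log q\ll \eps$, and introduces only a polynomial cost absorbed in $q^{A\eps}$; the resulting exponent $L^{|J|(3\sigma+1)-\Re(w')}$ becomes $\leq L^{-\eps}$. Combined with $\prod_{i\in I} N_i\leq N_1^{|I|}\leq N_1^{(k-1)/2}$ (valid since $|I|\leq(k-1)/2$) and the two Kloosterman factors (with and without the requirement $L\geq q^{1/2}$), one obtains~\eqref{21w1}.

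In Case 2 ($|I|>|J|$ and $\nu_j\geq 1$ for some $j\in J$), no rightward shift of $c_w$ is available, but the natural exponent $L^{|J|-|I|+O(\eps)}$ is already at most $q^{A\eps}L^{-\eps}$ because $|I|-|J|\geq 1$ (with the $q^{A\eps}$ absorbing the boundary value $L=\tfrac12$, since $2^{|I|-|J|}\leq 2^k\ll q^\eps$). I would use the always-valid bound $q^{\frac16+\frac\vartheta3+\eps}$, producing the factor $q^{-\frac56+\frac\vartheta3}$. The essential use of $\nu_j\geq 1$ is in the refined estimate $\max_{\nu:\,\nu_j\geq 1,\,\sum=B} N_*^{\nu_*}\prod_i N_i^{\nu_i}\leq N_j\cdot N_1^{B-1} q^\eps$ (obtained by ``factoring out'' the forced unit of mass at index $j$ and distributing the remaining $B-1$ units). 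This refinement replaces $N_1^{\tfrac12}$ in the $N$-bound above by $N_1^{-\tfrac12} N_j$, which combined with $\prod_{j\in J} N_j^{-1/2}$ and $\prod_{i\in I} N_i^{1/2}$ is at most $N_1^{-1}(N_1\cdots N_k)^{1/2}$ since $N_j\leq \prod_{\ell\in J} N_\ell$; this yields~\eqref{21w2}. The main obstacle will be the careful bookkeeping of the $N$-exponents in the two cases and the verification that all contour shifts avoid the relevant poles of $\zeta$ and of $\Psi_{\epsilon^*,B}$ (whose pole locations are recorded in the remark following~\eqref{fcoli2}); once the parameter matching with Lemma~\ref{DIbound} is set up correctly, the remaining steps are straightforward size estimates.
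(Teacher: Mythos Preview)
Your treatment of Case~2 ($|I|>|J|$ with some $\nu_j\geq1$) is essentially correct and matches the paper's argument. The gap is in Case~1.

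First, the contour shift you propose is not legitimate as stated. Moving $c_w$ rightward by $|J|-|I|$ (with $c_{u_1}$ held fixed) increases $\Re(v_1)=\Re(\tfrac12-\alpha_1-u_1+\tfrac w2)$ by $\tfrac12(|J|-|I|)$, so the real part of $v_1+\sum_{i\geq2}v_i+v_*$ moves past $B+1$ and you cross the pole of $\Psi_{\epsilon^*,B}$ recorded in the remark after~\eqref{fcoli2}; you also leave the region $\Re(v_1)=B+1-2\eps$ where absolute convergence was established. The paper avoids this by shifting $c_w$ and $c_{u_1}$ \emph{simultaneously} so that $\Re(v_1)$ stays fixed.

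Second, even with that fix your single argument does not recover~\eqref{21w1}. After the shift you obtain, for all $L$, a bound of the shape $q^{-1+A\eps}\cdot(\text{Kloosterman factor})\cdot N_1^{(k+1)/2}(N_1\cdots N_k)^{-1/2}L^{-\eps}$. For $L\geq q^{1/2}$ the factor is $q^{\vartheta}$ and you get the first term of~\eqref{21w1}, but for $L<q^{1/2}$ the ``always'' factor $q^{1/6+\vartheta/3}$ gives $q^{-5/6+\vartheta/3}N_1^{(k+1)/2}$, which is \emph{not} bounded by $q^{-1}(q^\vartheta N_1^{(k+1)/2}+q^{k/2-1/3+\vartheta/3}N_1^{1/2})$ when $N_1$ is large compared to $q^{(k-1)/k}$ (since $-\tfrac56+\tfrac\vartheta3>\vartheta-1$ for $\vartheta<\tfrac14$). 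The paper instead splits according to whether $L\leq q^{1/2}$ or $L>q^{1/2}$: in the small-$L$ range it does \emph{not} force $L^{-\eps}$ by a contour shift but keeps the factor $L^{|J|-|I|}\leq q^{(|J|-|I|)/2}$, which after combining with the $N_1^{\frac12+|I|}$ factor (rather than the crude $N_1^{(k+1)/2}$) produces the second term $q^{k/2-1/3+\vartheta/3}N_1^{1/2}$; in the large-$L$ range the double contour shift together with the sharper Kloosterman saving $q^\vartheta$ gives the first term. You need this dichotomy (or an equivalent device) to obtain~\eqref{21w1}.
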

\begin{proof}
First, we bound the sums over $h$ and $\ell$ by Lemma~\ref{DIbound} and we bound trivially the integrals which are all convergent by~\eqref{fnn},~\eqref{bfh2} and~\eqref{feg} when the lines of integrations are given by~\eqref{fcoli} and~\eqref{fcoli2}. Doing so, we obtain
\begin{align}
 \mathcal P_{I,\nu;\balpha,\bbeta}&\ll q^{-\frac56+\frac \vartheta3+A\eps}N_1^{-B-\frac12+A\eps}N_*^{1+\nu_*+\eps}\Big(\prod_{i\in I}N_{i}^{\frac12+\nu_i}\Big)\Big(\prod_{j\in J}N_{j}^{-\frac12+\nu_j}\Big)L^{|J|-|I|-\eps}\times\notag\\
 &\quad\times \int_{(\substack{c_s,c_w,c_{u_1},\\c_{u_*},c_{v_*}})}
 \bigg(\prod_{j\in J}\int_{(c_{u_j})}(1+|v_j|+|u_j|)^{1+4\eps}|\tilde P(u_j-s)||du_j|\bigg)\notag\\
&\quad\times \bigg(\prod_{i\in I}\int_{(c_{v_i})}|\tilde P(\tfrac 12-\alpha_i-v_i+\nu_i-s)|\bigg)|\tilde P(u_*)|
|\tilde P(u_1-s)|\notag\\
&\quad\times  \pmd{\Psi_{\epsilon^*,B}\pr{\tfrac12-\alpha_1-u_1+\tfrac w2,\bv,v_*} }\frac{|H^\prime_{I;\balpha,\bbeta}(w,s)|}{|ws|}|dsdwdu_1 \bdv du_*dv_{*}|\notag\\
&\ll q^{-\frac56+\frac \vartheta3+A\eps} N_1^{-B-\frac12+A\eps}N_*^{1+\nu_*}\Big(\prod_{i\in I}N_{i}^{\frac12+\nu_i}\Big)\Big(\prod_{j\in J}N_{j}^{-\frac12+\nu_j}\Big)L^{|J|-|I|-\eps}.\label{fasf}
\end{align}
If $|I|-|J|>0$ 
and at least one of the $\nu_j$ is greater than zero, then this is bounded by
\est{
 \mathcal P_{I,\nu;\balpha,\bbeta}
 &\ll q^{-\frac56+\frac \vartheta3+A\eps} N_1^{A\eps}\frac{N_*^{\nu_*+1}}{N_1^{\nu_*+1}}(N_1\cdots N_k)^{\frac12}\Big(\prod_{i\in I}\frac{N_{i}^{\nu_i}}{N_1^{\nu_i}}\Big)\Big(\prod_{j\in J}\frac{N_j^{\nu_j-1}}{N_1^{\nu_j}}\Big)L^{-\eps}\\
 &\ll q^{-\frac56+\frac \vartheta3+A\eps} N_1^{-1+A\eps}(N_1\cdots N_k)^{\frac12}L^{-\eps},\\
 }
since $B=\nu_1+\cdots+\nu_k$ and $N_2,\dots, N_k\leq N_1$, $N_*\ll kN_1q^{\eps/k}$. 

Now assume $|I|-|J|\leq 0$ and let $L\leq q^\frac12$. In this case~\eqref{fasf} gives
\begin{align}
  \mathcal P_{I,\nu;\balpha,\bbeta}
 &\ll q^{-\frac56+\frac \vartheta3+A\eps} N_1^{\frac12+|I|+A\eps}\frac{N_*^{\nu_*+1}}{N_1^{\nu_*+1}}(N_1\cdots N_k)^{-\frac12}\Big(\prod_{i\in I}\frac{N_{i}^{1+\nu_i}}{N_1^{1+\nu_i}}\Big)\Big(\prod_{j\in J}\frac{N_j^{\nu_j}}{N_1^{\nu_j}}\Big)q^{\frac 12(|J|-|I|)-\eps}\notag\\
 &\ll q^{-\frac56+\frac \vartheta3+\frac 12(|J|-|I|)+A\eps} N_1^{\frac12+|I|+A\eps}(N_1\cdots N_k)^{-\frac12}\notag\\
 &\ll q^{-\frac13+\frac\vartheta3+A\eps} (N_1^{\frac k2+A\eps}q^{-\frac12}+q^{\frac k2-1}N_1^\frac12)(N_1\cdots N_k)^{-\frac12},\label{dwcg}
\end{align}
since $|I|=k-1-|J|$ and $\frac {k-1}2\leq |J|\leq k-1$.

Finally, if $|I|-|J|\leq0$ 
and $L>q^\frac12$, then 
we move the lines of integration $c_{w}$ and $c_{u_1}$ to 
\est{
c_{w}=|J|-|I|+10\eps=k-1-2|I|+10\eps,\qquad
c_{u_1}
=-1-B+\tfrac k2-\Re(\alpha_1)-|I|+7\eps. 
} 
Then, we use Lemma~\ref{DIbound} and bound trivially the integrals (using~\eqref{fnn},~\eqref{bfh2} and~\eqref{feg}) and we obtain
\begin{align}
\mathcal P_{I,\nu;\balpha,\bbeta}&\ll q^{-1+\vartheta+A\eps} N_1^{-1-B+\frac k2-|I|+A\eps}N_*^{1+\nu_*}\Big(\prod_{i\in I}N_{i}^{\frac12+\nu_i}\Big)\Big(\prod_{j\in J}N_{j}^{-\frac12+\nu_j}\Big)L^{-\eps}\notag\\
 &\ll \frac{q^{-1+\vartheta+A\eps}}{L^\eps} N_1^{\frac k2+A\eps}\frac{N_*^{\nu_*+1}}{N_1^{\nu_*+1}}\Big(\prod_{i\in I}\frac{N_{i}}{N_1}\Big)\Big(\prod_{i=2}^k\frac{N_{i}^{-\frac12+\nu_i}}{N_1^{\nu_i}}\Big)\ll \frac{q^{-1+\vartheta+A\eps} N_1^{\frac k2+\frac12+A\eps}}{(N_1\cdots N_k)^{\frac12}L^{\eps}}.\label{frecaer}
\end{align}
Thus, since $ N_1^{\frac k2}q^{-\frac12}\ll q^{\frac k2-1} N_1^{\frac12}+q^{-1} N_1^{\frac k2+\frac12},$ we have that~\eqref{dwcg} and~\eqref{frecaer} imply~\eqref{21w1}.
\end{proof}
\subsection{Reassembling the sum over $\nu$ and further manipulations}

By the previous section, we only need to consider the $\mathcal P_{I;\nu;\balpha,\bbeta}$ with $|I|>|J|$ and $\nu_j=0$ for all $j\in J$ (and lines of integration given~\eqref{fcoli} and~\eqref{fcoli2}).
For each $j\in J$, we move $c_{u_{j}}$ to  $\frac12+\nu_j-2\eps$ and contextually $c_{v_{j}}$ to $c_{v_{j}}=-1+\eps/k$, passing through the pole of $\Psi_{\epsilon^*,B}$  at $v_j=0$. 
The contribution of the integral on the new line of integration can be bounded by
\es{\label{abfaw}
 &\ll q^{-\frac56+\frac \vartheta3+A\eps} N_1^{-1+A\eps}(N_1\cdots N_k)^{\frac12}L^{-\eps},
} 
as can be see by moving $c_{u_1}$ to $c_{u_1} =-B-\frac32-\alpha_1+7\eps$ and bounding the sums and integrals as in the proof of~\eqref{21w2}. Thus we only need to consider the residue at $v_{j}=0$ for all $j\in J$.
\comment{
 and, summarizing, we arrive to

------------------------------------------------------------

\est{
L_{\epsilon;\balpha,\bbeta}&=\sum_{\substack{I\cup J=\{2,\dots,k\},\\ I\cap J=\emptyset,\\  |I|>\frac{k}2-\frac14}}\sum_{\substack{\{\alpha_i^\prime,\beta_i^\prime\}=\{\alpha_i,\beta_i\}\ \forall i\in I,\\ (\alpha_j^\prime,\beta_j^\prime)=(\alpha_j,\beta_j)\ \forall j\in J\cup\{1\}}}\sum_{\substack{\nu=(\nu_2,\dots,\nu_k,\nu_*)\in\Z_{\geq0}^k,\\\nu_2+\cdots+\nu_k+\nu_*=B,\\\nu_{i}=0\text{ if $\pm_i=-1$ or $i\in J$},\\\nu_{*}=0\text{ if $\pm_*=-1$}}}\frac{B!}{\nu_2!\cdots \nu_k!\nu_*!} S_{I;\epsilon,\nu;\balpha,\bbeta}\\
&\quad+O(\dots),
}
where
\est{
S_{I;\epsilon,\nu;\balpha,\bbeta}&:=\sum_{d|q}\frac{\mu(\frac qd)}{\varphi(q)}\sum_\ell\sumstar_{h\mod \ell}\frac1{(2\pi i)^{4}}\int_{(c_s,c_w,c_{u_1},c_{u_*},c_{v_*})} \frac{q^{ks}d^{1-v_*-u_*+\nu_*}}{\ell^{\sum_{i\in I\cup\{1\}}(1-\alpha_i+\beta_i)+w}} F(v_*+u_*-\nu_*,\pm_*\tfrac{h}\ell)\\
&\quad \times  \bigg(\prod_{j\in J}\frac1{2\pi i}\int_{(c_{u_j})}D\pr{\tfrac12+\alpha_j+u_j,\alpha_j-\beta_j,\tfrac{\pm_j h}{\ell}}\tilde P(u_j-s)N_j^{u_j-s}du_j\bigg)\\
&\quad\times N_*^{u_*} \tilde P(u_*)N_1^{u_1-s}\tilde P(u_1-s) \Psi_{I;\epsilon,B}'\pr{\tfrac12-\alpha_1-u_1+\tfrac w2,v_2,\dots,v_k,v_*} \\
&\quad \times \bigg(\prod_{i\in I}\frac1{2\pi i}\int_{(c_{v_i})}\tilde P(\tfrac 12-\alpha_i-v_i+\nu_i-s)N_i^{\frac 12-\alpha_i-v_i+\nu_i-s}dv_i\bigg)\frac{H_{2;\balpha,\bbeta}(w,s)}{ws}dwdu_1 dsdu_*dv_{*},\\
}
and
\est{
\Psi^*_{I;\epsilon,B}(v_1,\dots,v_{k},v_*)&:=\frac{\Gamma(v_*)\prod_{i\in I}\Gamma(v_i)}{\Gamma(V_{+;I;\epsilon}(v_1,\dots,v_{k+1}))\Gamma(V_{-;I;\epsilon}(v_1,\dots,v_{k+1}))}\frac{G_{\balpha,\bbeta}(B+1-v_{k+1}-v_1-\sum_{i\in I}v_i)}{B+1-v_{k+1}-v_1-\sum_{i\in I}v_i},\\
V_{\pm;I;\epsilon}(v_1,\dots,v_{k+1})&:=\sum_{\substack{i\in I\cup\{*\},\\\pm_i1=\pm1}}v_{i}.
}
We keep the dependence on $v_j$ for $j\in J$ for ease of notation. Problem of notation with $*$.

}

In the same way, we move the line of integration $c_{v_*}$ to $c_{v_*}=1+\eps/k$ and $c_{u_*}$ to $c_{u_*}=\nu_*+2\eps-\eps/k$, passing through the pole of $\Psi_{\epsilon^*,B}$ at $v_*=B+1-(\frac12-\alpha_1-u_1+\frac w2)-\sum_{i\in I}v_i$.  The contribution of the new line of integration can be bounded by~\eqref{abfaw} in a similar way,
so again we only need to consider the contribution of the residue. Thus, summarizing (and recalling~\eqref{ddrR} and~\eqref{sdga}), we arrive to
\es{\label{afml}
\mathcal L_{\balpha,\bbeta}&=\sum_{\substack{I\cup J=\{2,\dots,k\},\\ I\cap J=\emptyset,\   |I|>|J|}}\,\sum_{\substack{\{\alpha_i^\prime,\beta_i^\prime\}=\{\alpha_i,\beta_i\}\ \forall i\in I,\\ (\alpha_j^\prime,\beta_j^\prime)=(\alpha_j,\beta_j)\ \forall j\in J\cup\{1\}}}\sum_{\substack{\nu}}\frac{B!}{\nu_2!\cdots \nu_k!\nu_*!} \mathcal Q_{I;\nu;\balpha,\bbeta}\\
&\quad+O\pr{  \frac{q^{-1+A\eps} N_1^{A\eps}}{(N_1\cdots N_k)^{\frac12}} (q^\vartheta N_1^{\frac {k+1}2}+q^{\frac k2-\frac13+\frac\vartheta 3} N_1^{\frac12}+q^{\frac16+\frac \vartheta3} N_2\cdots N_k)L^{-\eps}},
}
where the sum over $\nu$ is now over $\nu=(\nu_2,\dots,\nu_k,\nu_*)\in\Z_{\geq0}^k$ satisfying
\est{
\nu_2+\cdots+\nu_k+\nu_*=B,\qquad\nu_{i}=0\text{ if $\pm_i1=-1$ or $i\in J$},\qquad\nu_{*}=0\text{ if $\pm_*1=-1$},
}
and where
\es{\label{sab}
\mathcal Q_{I;\nu;\balpha,\bbeta}&:=\sum_{d|q}\frac{\mu(\frac qd)}{\varphi(q)}\sum_\ell\sumstar_{h\mod \ell}\frac{P(\ell/L)}{(2\pi i)^{4+|I|}}\int_{(\substack{c_s,c_w,\\c_{u_1},c_{u_*}},\,c_{v_i}\forall {i\in I})}\frac{d^{1-(B+\frac12+\alpha_1+u_1-\frac w2)-u_*+\nu_*}}{\ell^{\sum_{i\in I\cup\{1\}}(1-\alpha_i+\beta_i)+w}} \\
&\quad \times q^{ks} F(B+\frac12+\alpha_1+u_1-\frac w2-\sum_{i\in I}v_i+u_*-\nu_*,\pm_*\tfrac{dh}\ell)\frac{H'_{I;\balpha,\bbeta}(w,s)}{ws}\\[-0.5em]
&\quad \times  \bigg(\prod_{j\in J}\frac1{2\pi i}\int_{(c_{u_j})}D_{\alpha_j,\beta_j}\pr{\tfrac12+u_j,\tfrac{\pm_j h}{\ell}}\tilde P(u_j-s)N_j^{u_j-s}du_j\bigg)\\
&\quad\times N_*^{u_*} \tilde P(u_*)N_1^{u_1-s}\tilde P(u_1-s) \Psi_{I;\epsilon^*_I,B}'\pr{\tfrac12-\alpha_1-u_1+\tfrac w2,\bv_I} \\
&\quad \times \bigg(\prod_{i\in I}\tilde P(\tfrac 12-\alpha_i-v_i+\nu_i-s)N_i^{\frac 12-\alpha_i-v_i+\nu_i-s}d^{v_i}dv_i\bigg)dwdu_1 dsdu_{*},\\
}
for $\bv_I:=(v_i)_{i\in I}$, $\epsilon_I^*=(\pm_i1)_{i\in I\cup\{*\}}$ and
\es{\label{gfddr}
\Psi_{I;\epsilon^*_I,B}'(v_1,\bv_I)&:=\frac{\Gamma(B+1-v_1-\sum_{i\in I}v_i)\prod_{i\in I\cup\{1\}}\Gamma(v_i)}{\Gamma(V_{\mp_*;\epsilon_I}(v_1,\bv_I))\Gamma(B+1-V_{\mp_*;\epsilon_I}(v_1,\bv_I))},\\
V_{\pm,\epsilon_I}(v_1,\bv_I)&:=\sum_{\substack{i\in I\cup\{1\},\\\pm_i1=\pm1}}v_{i}.
}
We also remind that the line of integrations are
\es{\label{ghw}
&c_s:=\eps/k,  \quad c_{w}=10\eps,\quad c_{u_*}=1+\nu_*+2\eps-\eps/k,\\ &c_{u_j}=-\tfrac12+\nu_j-2 \eps \  \forall j\in J,\quad c_{v_i}=\frac \eps k\  \forall i\in I
}
and $c_{u_1} =-B-\tfrac12-\Re(\alpha_1)+7\eps$.

\begin{remark}\label{acep}
Notice that the integrand in~\eqref{sab} decays rapidly along vertical strip in each of the variables of integration. In particular, in the following computations we will always be able to bound the integrals trivially.
\end{remark}

At this point, we wish to execute the sum over the partitions of unity $N_*$. However, first we need to remove the truncation $N_*\ll kN_1q^{\eps/k}$.  This can be done at a negligible cost, as it is shown in the following lemma.
\begin{lemma}
For $N_*\gg kN_1q^\eps$ we have
\es{\label{jha2}
\sumdagger_{N_*\ll kN_1q^{\eps/k}}\mathcal Q_{I;\nu;\balpha,\bbeta}=\sumdagger_{N_*}\mathcal Q_{I;\nu;\balpha,\bbeta}+O(q^{-2+A\eps} N_1^{A\eps}(N_1\cdots N_k)^{\frac12}L^{-\eps}).
}
\end{lemma}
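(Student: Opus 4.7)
The strategy is to shift the line of integration $c_{u_*}$ in \eqref{sab} far to the left, gaining a decay factor $N_*^{u_*}=N_*^{-M}$ for $M$ a large absolute constant. The only $u_*$-poles of the integrand come from the periodic zeta function $F(s,\pm_* dh/\ell)$ at $s=1$, which requires $\pm_* dh/\ell\in\Z$, i.e.\ $\ell\mid d$. Since $q$ is prime and $d\in\{1,q\}$, this forces $\ell\in\{1,q\}$; the weight $P(\ell/L)$ then restricts this to the exceptional dyadic ranges $L\in[1/2,1]$ and $L\asymp q$.

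For $L$ outside these exceptional ranges, $F$ is entire in $u_*$ and the contour can be shifted freely. On $\Re(u_*)=-M$, each factor admits a tame bound: $|N_*^{u_*}|=N_*^{-M}$, $|d^{-u_*+\nu_*}|\leq q^{M+\nu_*}$, the sums over $\ell$ and $h$ contribute $O(L^{2+\eps})$ by Weil, and the remaining contour integrals in $s,w,u_1,\{u_j\}_{j\in J},\{v_i\}_{i\in I}$ converge absolutely thanks to the rapid imaginary-axis decay of $\tilde P$, $G_{\balpha,\bbeta}$, $g_{\balpha,\bbeta}$ (via \eqref{fnn}, \eqref{bfh2}, and Stirling \eqref{cbg}), together with the polynomial bound on $\Psi'_{I;\epsilon^*_I,B}$ from Lemma~\ref{bfp}. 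Choosing $M$ sufficiently large in terms of $\eps$ and using $N_*\gg kN_1q^\eps$ absorbs the resulting bound into the target error.

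For the exceptional $L$, the residue at $u_*=\nu_*+\tfrac12-B-\alpha_1-u_1+\tfrac w2+\sum_{i\in I}v_i$ is picked up. A direct substitution shows that the $d$-exponent in \eqref{sab} reduces to $-\sum_{i\in I}v_i=O(\eps)$ at this pole, so only the $\mu(q/d)/\varphi(q)\sim\pm 1/q$ prefactor survives. The resulting expression, now free of $u_*$, is bounded by further shifting the remaining contours (notably moving $c_{u_1}$ to exploit the smallness of $N_1^{u_1-s}$ at the natural contour $c_{u_1}=-B-\tfrac12-\Re(\alpha_1)+7\eps$) and by using the explicit form of $F(s,h/\ell)$ for $\ell\in\{1,q\}$ through $\zeta$ or \eqref{fef12}, together with trivial estimates elsewhere. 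Since only $O(1)$ exceptional dyadic $L$ contribute, summing over the $O(\log q)$ tail dyadic values $N_*\gg kN_1q^\eps$ adds at most a $\log q$ factor, absorbed into $q^{A\eps}$, yielding \eqref{jha2}.

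The main obstacle is the residue bound in the exceptional-$L$ case, since the residue of $F$ contributes $N_*^{1+\nu_*+O(\eps)}$, which grows with $N_*$. This growth must be offset by the $1/q$ prefactor, by the strong smallness of $N_1^{u_1-s}\sim N_1^{-B-\frac12+O(\eps)}$ at the natural contour (which, for $B=3k$ and $N_1\geq 1$, is already very favorable), and by a careful selection of the remaining contours to extract any compensating small factors. Verifying cleanly that the net exponent stays within the target $q^{-2+A\eps}N_1^{A\eps}(N_1\cdots N_k)^{1/2}L^{-\eps}$ is the delicate technical point.
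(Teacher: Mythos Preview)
Your approach has a genuine gap. When you shift $c_{u_*}$ to $-M$, the argument of $F$ in \eqref{sab} acquires real part $\approx -M-\nu_*$, and by the functional equation \eqref{fef} one has $F(\sigma+it,h/\ell)\ll \ell^{1-\sigma}(1+|t|)^{\frac12-\sigma}$ for $\sigma$ negative. Thus the sum over $h,\ell$ is \emph{not} $O(L^{2+\eps})$ as you claim, but rather picks up an extra factor of $L^{M+\nu_*+1}$. Combined with the $d^{-u_*}\le q^{M}$ you already noted, the shifted integral is controlled by roughly $(qL/N_*)^M$, and there is no relation forcing $qL\ll N_*$ (indeed $L$ can be as large as $N_1^{1/2+\eps}$ while $N_*$ may be only slightly larger than $N_1$). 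So the contour shift in $u_*$ alone does not produce decay in $N_*$.

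The residue analysis has the same defect from the other side: at the pole of $F$ you correctly find that the $d$-exponent collapses (in fact to $0$, so the $\ell=1$ contributions from $d=1$ and $d=q$ cancel via $\mu(q/d)$), but for $\ell=q$ (hence $L\asymp q$) there is no such cancellation, and the remaining expression carries the factor $N_*^{1+\nu_*+O(\eps)}$ with nothing left to kill the growth in $N_*$; the tail sum $\sumdagger_{N_*\gg kN_1q^{\eps}}$ then diverges. The factor $N_1^{-B-\frac12}$ you invoke is a constant in $N_*$ and cannot compensate.

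The paper's proof avoids both issues by never letting the argument of $F$ leave the line $\Re=1+O(\eps)$. Instead it shifts the contours $c_{v_i}$ (for $i\in I$) far to the left and $c_{u_1}$ far to the right, picking up a grid of residues from the poles of $\Psi'_{I;\epsilon_I^*,B}$, and for each resulting term \emph{readjusts} $c_{u_*}$ so that $\Re(\text{arg}\,F)=1+O(\eps)$ throughout. One then applies Lemma~\ref{DIbound} directly. The decay in $N_*$ comes from the adjusted $c_{u_*}$: after the shift one obtains ratios $(N_i/N_*)^{r_i+\nu_i}$ and $(N_1/N_*)^{r_1}$, and in the tail range $N_*\gg kN_1q^{\eps}$ each such ratio is $\le q^{-\eps}$, with at least one exponent of size $\asymp\Delta k$. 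Choosing $\Delta$ large in terms of $\eps$ yields \eqref{jha2}.
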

\begin{proof}
Given a large positive integer $\Delta$, we move $c_{v_i}$ to $c_{v_i}=-\Delta k+\frac12$ for all $i\in I$. Doing so, we pass through the poles of $\Psi_{I;\epsilon^*_I,B}'\pr{\tfrac12-\alpha_1-u_1+\tfrac w2,\bv_I}$ at $v_i\in S_\Delta:=\{-r\mid r\in\Z,0\leq r<\Delta k\}$, so that we have to deal with a sum of $(\Delta k+1)^{|I|}$ terms coming from the contribution of the residues and of the integrals in the new line of integrations.\footnote{In total there are $(\Delta k+1)^{|I|}$ terms because for each $v_i$ we have the possibility of taking the residue at $v_i=-r_i\in S_\Delta$ or to take the integral at $c_{v_i}=-\Delta k+\frac12$.} Then, for each of these terms, we move the line of integration $c_{u_1}$ to $c_{u_1}=\Delta k$. This can be done without crossing any pole of $\Psi_{I;\epsilon^*_I,B}'$ if the term was coming from picking up a residue in each of the variables $v_i$ for all $i\in I$, since in this case the $\Gamma$ factor in the denominator of $\Psi_{I;\epsilon^*_I,B}'$ cancel the poles of $\Gamma(v_1)=\Gamma(\tfrac12-\alpha_1-u_1+\tfrac w2)$. Otherwise, we also have to consider the residues of $\Psi_{I;\epsilon^*_I,B}'$ at $\tfrac12-\alpha_1-u_1+\tfrac w2\in S_\Delta$. In all cases, however, all the terms will still have at least one integral left (besides the $w$ and $u_*$ integrals) with line of integration $c_{v_i}=-\Delta k+\frac12$ or $c_{u_1}=\Delta k$.
Finally, for each of these terms we place the line of integration $c_{u_*}$ so that the real part of the argument of the function $F$ in~\eqref{sab} is still $1+(4-|I|/k)\eps$ (we can do this without crossing poles). 

Thus, bounding these terms by using Lemma~\ref{DIbound} and the estimates~\eqref{fnn},~\eqref{bfh2} and~\eqref{feg}, we obtain
\est{
\mathcal Q_{I;\nu;\balpha,\bbeta}&\ll q^{-\frac56+\frac\vartheta3+A\eps} \sum_{\substack{ r_1,\dots r_k\in(\{0,1,\dots,\Delta-1\}\cup\{\Delta k-\frac12\}),\\ r_i=\Delta k-\frac12\tn{ for some $i\in(\{1\}\cup I$)},\\ r_i=0\tn{ if $i\in J$}}}\frac{N_1^{\frac12+r_1+A\eps}N_2^{r_2+\frac12+\nu_2}\cdots N_k^{r_k+\frac12+\nu_k}}{N_*^{B+\sum_{i=1}^kr_i-\nu_*+(4-|I|/k)\eps}{\displaystyle L^\eps}}\prod_{j\in J}N_j^{-1}\\
&\ll q^{-1+A\eps} \frac{N_1^{A\eps}(N_1\cdots N_k)^{\frac12}}{q^{\Delta\eps}N_*^{\eps}}\Big(\prod_{j\in J}N_j^{-1}\Big)L^{-\eps}\ll q^{-2+A\eps} N_1^{A\eps}(N_1\cdots N_k)^{\frac12}L^{-\eps},
}
if $\Delta$ is large enough with respect to $\eps$. Equation~\eqref{jha2} then follows.
\end{proof}

We now move the line of integration $c_{u_1}$ to $c_{u_1}=\tfrac12+3\eps$ and then execute the sum over $N_*$, which we do by using the following Lemma whose (simple) proof is left to the reader.
\begin{lemma}\label{sopu}
Let $K(s)$ be a function which is analytic and grows at most polynomially on a strip $|\Re(s)|<c$ for some $c>0$. Then, for any $-c<c_u<c$ we have
\est{
\sumdagger_N \frac1{2\pi i}\int_{(c)}K(u)\tilde P(u)N^{u} du=K(0).
}
\end{lemma}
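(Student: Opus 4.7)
The plan is to prove the identity by reducing the left-hand side to an integrated form of the partition of unity, then applying Mellin inversion. First I would expand $\tilde P(u)=\int_0^\infty P(y)y^{u-1}dy$ inside the integral and interchange the order of integration, which is justified by the rapid decay of $\tilde P$ on vertical strips combined with the polynomial growth of $K$. Writing $\Phi(x):=\frac{1}{2\pi i}\int_{(c_u)}K(u)x^u\,du$, this yields
\est{
S(N):=\frac{1}{2\pi i}\int_{(c_u)}K(u)\tilde P(u)N^u\,du=\int_0^\infty P(y)\Phi(Ny)\frac{dy}{y}.
}

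Next, I would make the substitution $z=Ny$, so that $S(N)=\int_0^\infty P(z/N)\Phi(z)\frac{dz}{z}$, and sum over $N$. Using the defining property $\sum_N P(z/N)=1$ for all $z>0$ together with Fubini's theorem gives
\est{
\sum_N S(N)=\int_0^\infty \Big(\sum_N P(z/N)\Big)\Phi(z)\frac{dz}{z}=\int_0^\infty\Phi(z)\frac{dz}{z}.
}
Finally I would identify the right-hand side as a Mellin transform of $\Phi$ evaluated at zero. Since $K$ is analytic in the strip $|\Re(u)|<c$, the contour defining $\Phi$ can be shifted without picking up residues, which shows $\Phi(x)\ll x^{c-\varepsilon}$ as $x\to0$ and $\Phi(x)\ll x^{-c+\varepsilon}$ as $x\to\infty$, so $\int_0^\infty\Phi(z)z^{-1}dz$ converges absolutely. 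By Mellin inversion, the Mellin transform of $\Phi$ at the point $u=0$ equals $K(0)$, completing the identity.

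The main subtle point is justifying the interchanges of summation and integration. The partition-of-unity sum $\sum_N P(z/N)$ is locally finite (only $O(1)$ indices contribute for each fixed $z$), so Fubini is unproblematic there; the Mellin--Fubini step relies on the absolute convergence provided by the rapid decay of $\tilde P$ and the polynomial control on $K$ inside the strip. Once the decay estimates on $\Phi$ at $0$ and $\infty$ are in place, the remaining steps are formal applications of Mellin inversion.
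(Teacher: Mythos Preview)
Your overall strategy is sound, but the central step does not go through as written. The function
\[
\Phi(x)=\frac{1}{2\pi i}\int_{(c_u)}K(u)\,x^{u}\,du
\]
is not defined as an absolutely convergent integral when $K$ is merely of polynomial growth on the vertical line (take $K\equiv 1$ for a concrete failure). Your justification for the Fubini interchange invokes ``the rapid decay of $\tilde P$ on vertical strips'', but once you have replaced $\tilde P(u)$ by $\int_0^\infty P(y)y^{u-1}\,dy$, that decay is gone: the inner $y$-integral is bounded uniformly in $\Im(u)$, and what remains is $\int_{(c_u)}|K(u)|\,|du|$, which diverges. Consequently the pointwise bounds $\Phi(x)\ll x^{\pm(c-\varepsilon)}$ that you derive by contour shifting, and the final Mellin inversion identifying $\int_0^\infty\Phi(z)\,\tfrac{dz}{z}$ with $K(0)$, are statements about an object that does not exist.

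The argument is easily repaired. One option is to run your proof for $K_\delta(u):=K(u)e^{\delta u^2}$ (which does decay rapidly on vertical lines), obtain $\sumdagger_N S_\delta(N)=K_\delta(0)=K(0)$, and then let $\delta\to 0$; a uniform bound $S_\delta(N)\ll\min(N,N^{-1})^{c-\varepsilon}$ obtained by contour shifting justifies the limit. Alternatively, avoid $\Phi$ entirely: set $Q(x):=\tfrac{1}{2\pi i}\int_{(c_u)}K(u)\tilde P(u)\,x^{-u}\,du$, which \emph{is} absolutely convergent, so that $S(N)=Q(1/N)$; then apply Poisson summation over the geometric index set, using that the partition-of-unity identity forces $\tilde P(0)=\log r$ and $\tilde P(2\pi i m/\log r)=0$ for $m\neq 0$ (where the $N$ run over powers of $r$). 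The paper itself leaves the proof to the reader, so there is no ``official'' route to compare against, but either of these fixes makes your approach rigorous.
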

We move the line of integration $c_{u_1}$ to $c_{u_1}=\tfrac12+3\eps$ without crossing poles 
and apply the above lemma obtaining 
\es{\label{afwae}
\sumdagger_{N_*}\mathcal Q_{I;\nu;\balpha,\bbeta}&=\sum_{d|q}\frac{\mu(\frac qd)}{\varphi(q)}\sum_\ell\sumstar_{h\mod \ell}\frac{P(\ell/L)}{(2\pi i)^{3+|I|}}\int_{(\substack{c_s,c_w,\\ c_{u_1},}\,c_{v_i}\forall {i\in I})} \frac{q^{ks}d^{1-(B+\frac12+\alpha_1+u_1-\frac w2)+\nu_*}}{\ell^{\sum_{i\in I\cup\{1\}}(1-\alpha_i+\beta_i)+w}} \\
&\quad \times F(B+\tfrac12+\alpha_1+u_1-\frac w2-\sum_{i\in I}v_i-\nu_*,\pm_*\tfrac{h}\ell)\\[-0.6em]
&\quad \times  \bigg(\prod_{j\in J}\frac1{2\pi i}\int_{(c_{u_j})}D_{\alpha_j,\beta_j}\pr{\tfrac12+u_j,\tfrac{\pm_j h}{\ell}}\tilde P(u_j-s)N_j^{u_j-s}du_j\bigg)\\
&\quad\times N_1^{u_1-s}\tilde P(u_1-s) \Psi_{I;\epsilon^*_I,B}'\pr{\tfrac12-\alpha_1-u_1+\tfrac w2,\bv_I} \frac{H'_{I;\balpha,\bbeta}(w,s)}{ws}\\
&\quad \times\prod_{i\in I} \bigg(\tilde P(\tfrac 12-\alpha_i-v_i+\nu_i-s)N_i^{\frac 12-\alpha_i-v_i+\nu_i-s}d^{v_i}\df v_i\bigg)\df w\df u_1 \df s,
}
with lines of integrations that we can take to be given by~\eqref{ghw} and $c_{u_1}=\tfrac12+3\eps$.

We are finally ready to execute the sum over $\nu$. We do this in the following Lemma, which also summarizes the previous computations.
\begin{lemma}
We have
\es{\label{tya}
\sum_{\epsilon\in\{\pm1\}^k}\hspace{-0.2em}\rho_{\Upsilon}(\epsilon)\mathcal O''_{\epsilon,\alpha,\beta}
&=2\sumdagger_L\hspace{-0.5em}\sum_{\substack{I\cup J=\{2,\dots,k\},\\  I\cap J=\emptyset,\   |I|>|J|}}\,\sum_{\substack{\{\alpha_i^\prime,\beta_i^\prime\}=\{\alpha_i,\beta_i\}\, \forall i\in I_1,\\ (\alpha_j^\prime,\beta_j^\prime)=(\alpha_j,\beta_j)\, \forall j\in J}}\ 
\sum_{\substack{\epsilon\in\{\pm1\}^{k},\\ \pm_11=-1}}\hspace{-0.15em}\rho_{\Upsilon}(\epsilon)
\hspace{-0.45em}\sum_{\pm_*1\in\{\pm1\}} \mathcal  R_{I;\epsilon^*;\balpha,\bbeta}\\
&\quad+O\pr{  \frac{q^{-1+A\eps} N_1^{A\eps}}{(N_1\cdots N_k)^{\frac12}{\displaystyle L^{\eps}}} (q^\vartheta N_1^{\frac {k+1}2}+q^{\frac k2-\frac13+\frac\vartheta 3} N_1^{\frac12}+q^{\frac16+\frac \vartheta3} N_2\cdots N_k)},
}
where $I_1:=I\cup\{1\}$ and
\est{
\mathcal R_{I;\epsilon^*;\balpha,\bbeta}&:=\sum_{d|q}\frac{\mu(\frac qd)}{\varphi(q)}\sum_\ell\sumstar_{h\mod \ell}\frac{P(\ell/L)}{(2\pi i)^{3+|I|}}\int_{(\substack{c_s,c_w,\\c_{u_1},}\,c_{v_i}\forall {i\in I})} \frac{q^{ks}d^{\frac12-\alpha_1-u_1+\frac w2}N_1^{u_1-s} }{\ell^{\sum_{i\in I_1}(1-\alpha_i+\beta_i)+w}} \\
&\quad \times  \bigg(\prod_{j\in J}\frac1{2\pi i}\int_{(c_{u_j})}D_{\alpha_j,\beta_j}\pr{\tfrac12+u_j,\tfrac{\pm_j h}{\ell}}\tilde P(u_j-s)N_j^{u_j-s}du_j\bigg)\\
&\quad\tilde P(u_1-s)\times F(\tfrac12+\alpha_1+u_1-\frac w2-\sum_{i\in I}v_i,\pm_*\tfrac{dh}\ell) \Psi_{I;\epsilon^*_I,0}'\pr{\tfrac12-\alpha_1-u_1+\tfrac w2,\bv_I} \\
&\quad \times \bigg(\prod_{i\in I}\tilde P(\tfrac 12-\alpha_i-v_i-s)N_i^{\frac 12-\alpha_i-v_i-s}d^{v_i}dv_i\bigg)\frac{H'_{I;\balpha,\bbeta}(w,s)}{ws}dwdu_1 ds\\
}
and lines of integrations given by~\eqref{ghw} and $c_{u_1}=\tfrac12+3\eps$.
\end{lemma}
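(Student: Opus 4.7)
The plan is to derive \eqref{tya} by evaluating the outstanding sum over $\nu$ in \eqref{afml} and then assembling all of the preceding error estimates. Combining \eqref{iml}, \eqref{hgfd}, and \eqref{afml}, one first expresses the left-hand side of \eqref{tya} as a sum over the same parameters $L$, $I$, $J$, $\{\alpha_i',\beta_i'\}$, $\epsilon$, $\pm_*$ of the quantity
\est{
2\sumdagger_{N_*\ll kN_1q^{\eps/k}}\sum_{\nu}\frac{B!}{\prod_{i\in I}\nu_i!\,\nu_*!}\mathcal Q_{I;\nu;\balpha,\bbeta},
}
restricted to $|I|>|J|$ and $\nu_j=0$ for $j\in J$, plus error terms arising from three sources: the cases $|I|\leq|J|$ controlled by \eqref{21w1}, the cases with some $\nu_j>0$ for $j\in J$ controlled by \eqref{21w2}, and the residues discarded in the passage to \eqref{afml} bounded as in \eqref{abfaw}. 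A direct accounting of these three contributions then yields precisely the error on the right-hand side of \eqref{tya}.

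My next step is to remove the truncation $N_*\ll kN_1q^{\eps/k}$ at the negligible cost \eqref{jha2}, arriving at $\sumdagger_{N_*}\mathcal Q_{I;\nu;\balpha,\bbeta}$ as given by \eqref{afwae}. Inside \eqref{afwae} I would perform the change of variables $v_i\mapsto v_i+\nu_i$ for each $i\in I$. After the shift, the only poles of the integrand in the $v_i$ direction come from the factor $\Gamma(v_i+\nu_i)$ inside $\Psi'_{I;\epsilon^*_I,B}$; these lie at $v_i=-\nu_i,-\nu_i-1,\dots$, all strictly to the left of the shifted contour $c_{v_i}=\eps/k-\nu_i$, so one can move this contour back to $\eps/k$ without crossing any singularities, while absolute convergence throughout is ensured by \eqref{bfdsh}, \eqref{fnn}, and Lemma~\ref{bfp}. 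After the substitution, the factors $\tilde P(\tfrac12-\alpha_i-v_i+\nu_i-s)N_i^{\frac12-\alpha_i-v_i+\nu_i-s}$ become exactly $\tilde P(\tfrac12-\alpha_i-v_i-s)N_i^{\frac12-\alpha_i-v_i-s}$, the argument of $F$ collapses to $\tfrac12+\alpha_1+u_1-\tfrac w2-\sum_{i\in I}v_i$, and the total power of $d$ becomes $\tfrac12-\alpha_1-u_1+\tfrac w2+\sum_{i\in I}v_i$, each matching the expression for $\mathcal R_{I;\epsilon^*;\balpha,\bbeta}$.

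The only remaining $\nu$-dependence after the shift is confined to $\Psi'_{I;\epsilon^*_I,B}(v_1,\bv_I+(\nu_i)_{i\in I})$. By the definition \eqref{gfddr} and the identity $\Gamma(z+n)/\Gamma(z)=(z)_n$, this equals $\Psi'_{I;\epsilon^*_I,0}(v_1,\bv_I)$ multiplied by
\est{
\frac{(1-v_1-\sum_{i\in I}v_i)_{\nu_*}\prod_{i\in I}(v_i)_{\nu_i}}{(V_{\mp_*;\epsilon_I}(v_1,\bv_I))_{V_{\mp_*;\epsilon_I}(0,(\nu_i)_{i\in I})}(1-V_{\mp_*;\epsilon_I}(v_1,\bv_I))_{B-V_{\mp_*;\epsilon_I}(0,(\nu_i)_{i\in I})}}.
}
The key observation is that $\nu_i=0$ when $\pm_i1=-1$ and $\nu_*=0$ when $\pm_*1=-1$, so in each of the two parity cases for $\pm_*$ exactly one of the denominator Pochhammer symbols reduces to $1$, and the multinomial Chu--Vandermonde identity
\est{
\sum_{\mu_0+\cdots+\mu_r=B}\frac{B!}{\mu_0!\cdots\mu_r!}(x_0)_{\mu_0}\cdots(x_r)_{\mu_r}=(x_0+\cdots+x_r)_B
}
evaluates the weighted sum of the numerator to precisely the remaining denominator Pochhammer symbol. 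Hence $\sum_\nu\frac{B!}{\prod_{i\in I}\nu_i!\,\nu_*!}$ of the ratio is $1$, the weighted $\nu$-sum collapses to $\Psi'_{I;\epsilon^*_I,0}(v_1,\bv_I)$, and the resulting expression is exactly $\mathcal R_{I;\epsilon^*;\balpha,\bbeta}$. The main obstacle is this last combinatorial step: one must carefully track the asymmetry caused by the convention $\pm_11=-1$, which places $v_1$ inside $V_{-;\epsilon_I}$ but outside $V_{+;\epsilon_I}$ and so makes the two parity cases of $\pm_*$ genuinely different.
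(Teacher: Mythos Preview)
Your proposal is correct and follows essentially the same route as the paper. The paper packages the change of variables $v_i\mapsto v_i+\nu_i$ together with the collapse of the $\nu$-sum into a separate Lemma~\ref{rcmpv}, whose key ingredient is the generalized Beta identity
\[
\sum_{r_1+\cdots+r_m=r}\frac{r!}{r_1!\cdots r_m!}\frac{\Gamma(s_1+r_1)\cdots\Gamma(s_m+r_m)}{\Gamma(r+s_1+\cdots+s_m)}=\frac{\Gamma(s_1)\cdots\Gamma(s_m)}{\Gamma(s_1+\cdots+s_m)},
\]
which is precisely your multinomial Chu--Vandermonde identity rewritten via $(x)_n=\Gamma(x+n)/\Gamma(x)$; your case analysis on $\pm_*$ recovers exactly the two specialisations needed.

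One point you should make explicit: your assertion that ``the only poles of the integrand in the $v_i$ direction come from the factor $\Gamma(v_i+\nu_i)$'' is not automatic, since the periodic zeta function $F(s,\pm_*\tfrac{dh}{\ell})$ has a pole at $s=1$ when $\ell\mid dh$, and the argument of $F$ does pass through this value as you move the $v_i$-contours back to $\eps/k$ (for $\nu_*<B$). The paper handles this by noting that the residues from the $d=1$ and $d=q$ summands cancel: at the pole one has $d^{\frac12-\alpha_1-u_1+\frac w2+\sum_{i\in I}v_i}=d^0$, so the two $\mu(q/d)$-weighted contributions are equal and opposite. This is exactly the hypothesis ``$\mathcal F$ analytic on $0<\Re(v_0)<B+1$'' in Lemma~\ref{rcmpv}, and the paper flags it explicitly (``applicable since the pole of $F$ is canceled by the sum over $d$''). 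With that remark added, your argument is complete.
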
 
\begin{proof}
Using~\eqref{iml},~\eqref{hgfd},~\eqref{afml} and~\eqref{jha2} we obtain~\eqref{tya}, with $\mathcal R_{I;\epsilon^*;\balpha,\bbeta}$ replaced by
\est{
\mathcal R'_{I;\epsilon^*;\balpha,\bbeta}&:=\sum_{\nu}\frac{B!}{\nu_2!\cdots \nu_k!\nu_*!}\sumdagger_{N_*}\mathcal Q_{I;\nu;\balpha,\bbeta}\\
}
and $\sumdagger\mathcal Q_{I;\nu;\balpha,\bbeta}$ as in~\eqref{afwae}. Thus, the Lemma reduces to showing that $\mathcal R_{I;\epsilon^*;\balpha,\bbeta}=\mathcal R'_{I;\epsilon^*;\balpha,\bbeta}$. This is an immediate consequence of Lemma~\ref{rcmpv} below, which is applicable since the pole of $F$ is canceled by the sum over $d$.
\end{proof}

\subsection{Reassembling the sum over $\epsilon$}

Now, we can also get rid of the integral over $w$. To do this, first we move the lines of integration $c_{u_1}$ and $c_{u_j}$ for $j\in J$ (without passing through poles), so that we have
\es{\label{uav}
c_{u_1}=-\Re(\alpha_1)+7\eps, \quad c_{u_j}=\frac12-\Re(\alpha_j)-2 \eps  \ \forall j\in J \quad c_s:=\eps/k,\quad  c_{v_i}=\frac \eps k\  \forall i\in I\
}
and then we move $c_{w}$ to $c_{w}=-1+10\eps$ passing through a pole at $w=0$. The contribution of the new line of integration is trivially bounded by
\es{\label{ypa}
\ll q^{-1+A\eps}N_1^{-\frac12+A\eps} (N_1\cdots N_k)^{\frac12}L^{-\eps}.
}
since we have the convexity bound $D(1-2\eps+it,\alpha_j-\beta_j,\frac h\ell)\ll \ell^{3\eps}(1+|t|)^{3\eps}$ and $|I|\geq 2$ (since $|I|>|J|$ and $k\geq3$). Thus, we only need to consider the contribution from the residue at $w=0$. 

By the convexity bound $F(\frac12+7\eps-|I|\eps/k+it,\frac h\ell)\ll \ell^\frac12(1+|t|)^\frac12$, the contribution of the $d=1$ term is also bounded by~\eqref{ypa}. Thus, using also that $\varphi(q)^{-1}=q^{-1}+O(q^{-2})$ for $q$ prime, we have
\es{\label{tya2}
\mathcal R_{I;\epsilon^*;\balpha,\bbeta}=\mathcal S_{I;\epsilon^*;\balpha,\bbeta}+O\pr{q^{-1+A\eps}N_1^{-\frac12+A\eps} (N_1\cdots N_k)^{\frac12}L^{-\eps}}
}
with 
\est{
\mathcal S_{I;\epsilon^*;\balpha,\bbeta}&=\sum_\ell\sumstar_{h\mod \ell}\frac{P(\ell/L)}{(2\pi i)^{2+|I|}}\int_{(\substack{c_s,\\c_{u_1},}\,c_{v_i}\forall {i\in I})} \frac{q^{ks-\frac12-\alpha_1-u_1}}{\ell^{\sum_{i\in I\cup\{1\}}(1-\alpha_i+\beta_i)}}N_1^{u_1-s}\tilde P(u_1-s)  \\
&\quad \times  \bigg(\prod_{j\in J}\frac1{2\pi i}\int_{(c_{u_j})}D_{\alpha_j,\beta_j}\pr{\tfrac12+u_j,\pm_*\tfrac{\pm_j h}{\ell}}\tilde P(u_j-s)N_j^{u_j-s}du_j\bigg)\\
&\quad\times F(\tfrac12+\alpha_1+u_1-\sum_{i\in I}v_i,\tfrac{qh}\ell) \Psi_{I;\epsilon^*_I,0}'\pr{\tfrac12-\alpha_1-u_1,\bv_I} \\
&\quad \times \bigg(\prod_{i\in I}\tilde P(\tfrac 12-\alpha_i-v_i-s)N_i^{\frac 12-\alpha_i-v_i-s}q^{v_i}dv_i\bigg)\frac{H'_{I;\balpha,\bbeta}(0,s)}{s}du_1 ds\\
}
and lines of integration given by~\eqref{uav}. Notice that we made the change of variable $h\rightarrow \pm_*h$.

We are ready to reassemble the sum over $\epsilon$. To do this, first we split $\epsilon$ into $\epsilon_{I_1}$ and $\epsilon_{J}$, where $\epsilon_S:=(\pm_i1)_{i\in S}$; in particular, $\rho_\Upsilon(\epsilon)=\rho_\Upsilon(\epsilon_{I_1})\rho_\Upsilon(\epsilon_J)$ where, with a slight abuse of notation, we write $\rho_{\Upsilon}(\epsilon_S):=\prod_{i\in\Upsilon\cap S} (\pm_i1)$. Then, we observe that 
\est{
\sum_{\epsilon_J\in\{\pm1\}^{|J|}}\rho_{\Upsilon}(\epsilon_J)\prod_{j\in J}D_{\alpha_j,\beta_j}\pr{s_j,\pm_*\tfrac{\pm_j h}{\ell}}=2^{|J|}(\pm_*i)^{|\Upsilon\cap J|}\prod_{j\in J}D_{j;\alpha_j,\beta_j}\pr{s_j,\tfrac{ h}{\ell}}.
}
Thus, 
\es{\label{asdcae}
&\sum_{\substack{\epsilon\in\{\pm1\}^{k},\\ \pm_11=-1}}\rho_{\Upsilon}(\epsilon)\sum_{\pm_*1\in\{\pm1\}} \mathcal S_{I;\epsilon^*;\balpha,\bbeta}=2^{|J|}i^{|\Upsilon\cap J|} \sum_\ell\sumstar_{h\mod \ell}\frac{P(\ell/L)}{(2\pi i)^{2+|I|}}\frac{q^{ks-\frac12-\alpha_1-u_1}}{\ell^{\sum_{i\in I\cup\{1\}}(1-\alpha_i+\beta_i)}}  \\
&\hspace{1.5em}\quad \times \int_{(\substack{c_s,\\c_{u_1},}\,c_{v_i}\forall i \in I)}  \bigg(\prod_{j\in J}\frac1{2\pi i}\int_{(c_{u_j})}D_{j;\alpha_j,\beta_j}\pr{\tfrac12+u_j,\tfrac{ h}{\ell}}\tilde P(u_j-s)N_j^{u_j-s}du_j\bigg)\\
&\hspace{1.5em}\quad\times N_1^{u_1-s}\tilde P(u_1-s)F(\tfrac12+\alpha_1+u_1-\sum_{i\in I}v_i,\tfrac{qh}\ell) \mathcal{X}_I(\tfrac12-u_1-\alpha_1,\bv_I) \\
&\hspace{1.5em}\quad \times \bigg(\prod_{i\in I}\tilde P(\tfrac 12-\alpha_i-v_i-s)N_i^{\frac 12-\alpha_i-v_i-s}q^{v_i}dv_i\bigg)\frac{H'_{I;\balpha,\bbeta}(0,s)}{s}du_1 ds,\\
}
with
\est{
\mathcal{X}_I(v_1,\bv_I)&:=\sum_{\pm_*1\in\{\pm1\}}(\pm_*1)^{|\Upsilon\cap J|}\sum_{\substack{\epsilon_{I_1}\in\{\pm1\}^{|I_1|},\\\pm_11=-1}}\rho_{\Upsilon}(\epsilon_{I_1})  \Psi_{I;\epsilon^*_I,0}'\pr{v_1,\bv_I} ,\\
&=\Gamma\Big(1-\sum_{i\in I_1}v_i\Big)\prod_{i\in I_1}\Gamma(v_i)\sum_{\pm_*1\in\{\pm1\}}\hspace{-0.3em}(\pm_*1)^{|\Upsilon\cap I_1|}\\
&\quad\times\sum_{\substack{\epsilon_{I_1}\in\{\pm1\}^{|I_1|},\\\pm_11=-1}}\rho_{\Upsilon}(\epsilon_{I_1}) \prbigg{\Gamma\Big(\sum_{\substack{i\in I_1,\\\pm_i1=\mp_*1}}v_i\Big)\Gamma\Big(1-\sum_{\substack{i\in I_1,\\\pm_i1=\mp_*1}}v_i\Big) }^{-1},
}
by the definition~\eqref{gfddr} of $\Psi_{I;\epsilon^*_I,0}'$ and since  $(\pm_*1)^{|\Upsilon\cap J|}=(\pm_*1)^{|\Upsilon\cap( I\cup\{1\})|}$ for $|\Upsilon|$ even (as we have assumed).  Also, we remind that we defined $\epsilon_{I}^*:=(\pm_i1)_{i\in I\cup\{*\}}$ and  $I_1:=I\cup\{1\}$.

We will now give a $\Gamma$-function identity, which we will use to give a symmetric expression for $\mathcal{X}_I(v_1,\bv_I)$.

\begin{lemma}\label{gfar}
Let $r\geq1$, $\Theta\subseteq\{1,\dots,r\}$ and $(s_1,\dots,s_r)\in\C^{r}$. For $\epsilon_{r}=(\pm_1,\dots,\pm_r1)\in\{\pm1\}^{r}$ let $\rho_{\Theta}(\epsilon):=\prod_{i\in\Theta} (\pm_i1)$.
Then,\footnote{The identity has to be interpreted as an identity between meromorphic functions.}
\est{
&
\prod_{i=1}^r\Gamma(s_i)\sum_{\pm_*1\in\{\pm1\}}\hspace{-0.3em}(\pm_*1)^{| \Theta|} \hspace{-0.4em}\sum_{\substack{\epsilon\in\{\pm1\}^{r},\\ \pm_11=-1}}\rho_{\Theta}(\epsilon)\prbigg{\Gamma\Big(\sum_{\substack{\pm_i1=\mp_*1}}s_i\Big)\Gamma\Big(1-\sum_{\substack{\pm_i1=\mp_*1}}s_i\Big) }^{-1}
\\
&\hspace{1em}=\frac{2^{s_1+\cdots+s_r}}{\pi^{1-\frac r2}}\bigg(\prod_{\substack{ i\in\Theta}}\frac{\Gamma(\frac12+\frac {s_i}2)}{\Gamma(1-\frac {s_i}2)} \bigg)\bigg(\prod_{\substack{i\notin\Theta}}\frac{\Gamma(\frac {s_i}2)}{\Gamma(\frac12-\frac {s_i}2)}\bigg)\sin\prBig{\frac{\pi }2(s_1+\cdots+s_r)-\frac{\pi }{2}|\Theta|}.\\
}
\end{lemma}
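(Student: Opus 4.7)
The plan is to apply the reflection formula $\Gamma(z)\Gamma(1-z)=\pi/\sin(\pi z)$ to convert the reciprocal Gamma factor inside the sum into a sine, reindex cleverly, evaluate the resulting combinatorial exponential sum in closed form, and then use the duplication formula to recognize the Gamma ratios on the right-hand side.

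More precisely, writing $S_T:=\sum_{i\in T}s_i$, I first use reflection to replace $\bigl[\Gamma(S_T)\Gamma(1-S_T)\bigr]^{-1}$ by $\pi^{-1}\sin(\pi S_T)$. Next I reparametrize the double sum over $\pm_*$ and $\epsilon$. For each choice of $\pm_*1$, the set $T(\epsilon):=\{i:\pm_i1=\mp_*1\}$ is a subset of $\{1,\dots,r\}$; since $\pm_11=-1$, the constraint becomes $1\in T$ when $\pm_*1=+1$ and $1\notin T$ when $\pm_*1=-1$. A direct parity check shows $\rho_\Theta(\epsilon)=(-1)^{|\Theta\cap T|}$ in the first case and $(\pm_*1)^{|\Theta|}\rho_\Theta(\epsilon)=(-1)^{|\Theta\cap T|}$ in the second. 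Thus the two contributions merge into an unrestricted sum, yielding
\est{
\text{LHS}=\frac{\prod_{i=1}^r\Gamma(s_i)}{\pi}\sum_{T\subseteq\{1,\dots,r\}}(-1)^{|\Theta\cap T|}\sin(\pi S_T).
}

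Writing $\sin(\pi S_T)=\frac{1}{2i}(e^{i\pi S_T}-e^{-i\pi S_T})$, the $T$-sum factorizes as a product indexed by $i$, since each $i$ either lies in $T$ or not. A short calculation using $1+e^{\pm i\pi s}=2e^{\pm i\pi s/2}\cos(\pi s/2)$ and $1-e^{\pm i\pi s}=\mp 2ie^{\pm i\pi s/2}\sin(\pi s/2)$ gives
\est{
\sum_T(-1)^{|\Theta\cap T|}\sin(\pi S_T)=2^r\Big(\prod_{i\in\Theta}\sin\tfrac{\pi s_i}{2}\Big)\Big(\prod_{i\notin\Theta}\cos\tfrac{\pi s_i}{2}\Big)\sin\bigl(\tfrac{\pi}{2}(s_1+\cdots+s_r)-\tfrac{\pi}{2}|\Theta|\bigr),
}
after the exponentials $e^{\pm i\pi(S-|\Theta|)/2}$ combine into a sine.

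Finally, the duplication formula $\Gamma(s)=2^{s-1}\pi^{-1/2}\Gamma(\tfrac{s}{2})\Gamma(\tfrac{s+1}{2})$ together with reflection gives
\est{
\Gamma(s)\sin(\tfrac{\pi s}{2})=\frac{2^{s-1}\sqrt{\pi}\,\Gamma(\frac{1+s}{2})}{\Gamma(1-\frac{s}{2})},\qquad
\Gamma(s)\cos(\tfrac{\pi s}{2})=\frac{2^{s-1}\sqrt{\pi}\,\Gamma(\frac{s}{2})}{\Gamma(\frac{1-s}{2})},
}
and applying these factor by factor (with the first identity used for $i\in\Theta$ and the second for $i\notin\Theta$) collapses the Gamma product, the sines, and the cosines into exactly the expression on the right, producing the overall constant $2^{s_1+\cdots+s_r}/\pi^{1-r/2}$. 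The identity then holds as an equality of meromorphic functions by analytic continuation from real $s_i$.

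There is no substantive obstacle here: the lemma is a purely trigonometric/combinatorial identity, and the only subtlety is bookkeeping the signs carefully when folding the $(\pm_*,\epsilon)$ sum into the subset sum $\sum_T$ and checking that both choices of $\pm_*$ produce the same sign $(-1)^{|\Theta\cap T|}$.
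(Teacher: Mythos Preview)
Your proof is correct and follows essentially the same route as the paper: apply the reflection formula to turn the reciprocal Gamma factor into $\pi^{-1}\sin(\pi S_T)$, factor the resulting sum over sign patterns as a product of two-term factors, and then use the duplication/reflection identities $\Gamma(s)\sin(\tfrac{\pi s}{2})=\pi^{1/2}2^{s-1}\Gamma(\tfrac{1+s}{2})/\Gamma(1-\tfrac{s}{2})$ and $\Gamma(s)\cos(\tfrac{\pi s}{2})=\pi^{1/2}2^{s-1}\Gamma(\tfrac{s}{2})/\Gamma(\tfrac{1-s}{2})$ to reach the stated right-hand side. The only organizational difference is that the paper keeps the two values of $\pm_*1$ separate (writing the inner sum as $\Im(e^{\pi i s_1}A_+ + (-1)^{|\Theta|}A_-)$ for real $s_i$ and then combining), whereas you observe at the outset that the pair $(\pm_*1,\epsilon)$ with $\pm_11=-1$ is in bijection with arbitrary subsets $T\subseteq\{1,\dots,r\}$ with weight $(-1)^{|\Theta\cap T|}$; this folds the two cases into a single subset sum and avoids the case split on whether $1\in\Theta$, but the underlying computation is identical.
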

\begin{proof}
First, we observe that, by analytic continuation, we can assume that $s_1,\dots,s_r\in\R\setminus\Z$. Thus, using the reflection formula for the Gamma function to have 
\est{
\prbigg{\Gamma\Big(\sum_{\substack{\pm_i1=\mp_*1}}s_i\Big)\Gamma\Big(1-\sum_{\substack{\pm_i1=\mp_*1}}s_i\Big) }^{-1}=\pi^{-1}\sin\Big(\pi\sum_{\substack{\pm_i1=\mp_*1}}s_i\Big)=\pi^{-1}\Im\bigg(\exp\Big(\pi i\sum_{\substack{\pm_i1=\mp_*1}}s_i\Big)\bigg).
}
It follows that
\est{
S&:=\sum_{\pm_*1\in\{\pm1\}}\hspace{-0.3em}\mathcal (\pm_*1)^{| \Theta|}  \hspace{-0.4em}\sum_{\substack{\epsilon\in\{\pm1\}^{r},\\ \pm_11=-1}}\rho_{\Theta}(\epsilon)\prbigg{\Gamma\Big(\sum_{\substack{\pm_i1=\mp_*1}}s_i\Big)\Gamma\Big(1-\sum_{\substack{\pm_i1=\mp_*1}}s_i\Big) }^{-1}\\
&=\pi^{-1}\Im \prBig{e^{\pi i s_1}A_{+}
+(-1)^{| \Theta|}  A_-},\\
}
where
\est{
A_{\pm}:=\sum_{\substack{\epsilon\in\{\pm1\}^{r},\\ \pm_11=-1}}\rho_{\Theta}(\epsilon)\exp\Big(\pi i\sum^r_{\substack{i=2,\\\pm_i1=\mp1}}s_i\Big).
}
Now, since $\rho_{\Theta}(\epsilon)=\prod_{i\in\Theta} (\pm_i1)=(-1)^{|\Theta\cap\{1\}|}\prod_{i\in\Theta\setminus\{1\}} (\pm_i1)$, we have
\est{
A_{\pm}&=(-1)^{|\Theta\cap\{1\}|}\bigg(\prod_{\substack{i=2,\\ i\in\Theta}}^r(\pm1\mp e^{\pi is_i})\bigg)\bigg(\prod_{\substack{i=2,\\ i\notin\Theta}}^r(1+e^{\pi is_i})\bigg)\\
&=(\pm 1)^{|\Theta\cap\{1\}|}(\mp1)^{|\Theta|}i^{|\Theta\setminus\{1\}|}2^{r-1}\exp\prbigg{\frac{\pi i}2\sum_{i=2}^rs_i}\bigg(\prod_{\substack{i=2,\\ i\in\Theta}}^r\sin\pr{\frac{\pi s_i}2}\bigg)\bigg(\prod_{\substack{i=2,\\ i\notin\Theta}}^r\cos\pr{\frac{\pi s_i}2}\bigg)
}
and thus
\est{
S&=\frac{2^{r-1}}\pi\bigg(\prod_{\substack{ i\in\Theta,\\ i\neq1}}\sin\pr{\frac{\pi s_i}2}\bigg)\bigg(\prod_{\substack{i\notin\Theta,\\ i\neq1}}\cos\pr{\frac{\pi s_i}2}\bigg)(-1)^{| \Theta|}\Im\pr{ \pr{ e^{\pi is_1}+  (-1)^{|\Theta\cap\{1\}|}}e^{\frac{\pi i}{2}|\Theta\setminus\{1\}|+\frac{\pi i}2\sum_{i=2}^rs_i}}\\
&=\frac{2^{r}}\pi\bigg(\prod_{\substack{ i\in\Theta}}\sin\pr{\frac{\pi s_i}2}\bigg)\bigg(\prod_{\substack{i\notin\Theta}}\cos\pr{\frac{\pi s_i}2}\bigg)(-1)^{| \Theta|}\Im\pr{i^{|\Theta\cap\{1\}|}e^{\frac{\pi i}{2}|\Theta\setminus\{1\}|+\frac{\pi i}2\sum_{i=1}^rs_i}}\\
&=\frac{2^{r}}\pi\bigg(\prod_{\substack{ i\in\Theta}}\sin\pr{\frac{\pi s_i}2}\bigg)\bigg(\prod_{\substack{i\notin\Theta}}\cos\pr{\frac{\pi s_i}2}\bigg)\sin\prBig{-\frac{\pi }{2}|\Theta|+\frac{\pi }2\sum_{i=1}^rs_i}.\\
}
By the duplication and the reflection formula for the $\Gamma$-function we have
\est{
\sin\pr{\frac{\pi s}2}\Gamma(s)=\pi^\frac122^{s-1}\frac{\Gamma(\frac12+\frac s2)}{\Gamma(1-\frac s2)},\qquad
\cos\pr{\frac{\pi s}2}\Gamma(s)=\pi^\frac122^{s-1}\frac{\Gamma(\frac s2)}{\Gamma(\frac12-\frac s2)}
}
and thus the Lemma follows.
\end{proof}
Applying Lemma~\ref{gfar} with $r=|I_1|$ and using the definition~\eqref{dfgi} of $\Gamma_i$, we obtain
\est{\mathcal{X}_I(v_1,\bv_I)
&=
\Gamma\Big(1-\sum_{i\in I_1}v_i\Big)
\frac{2^{\sum_{i\in I_1}v_i}}{\pi^{1-\frac {|I_1|}2}}\bigg(\prod_{\substack{ i\in I_1}}\frac{\Gamma_i(\frac {v_i}2)}{\Gamma_i(\frac12-\frac {v_i}2)}\bigg)\sin\prBig{-\frac{\pi }{2}|I_1\cap \Upsilon|+\frac{\pi }2\sum_{i\in I_1}v_i}.\\
}
Thus, plugging this expression into~\eqref{asdcae}, making the change of variables $ u_i=\frac12-\alpha_i-v_i-s$ for all $i\in I$ and $u_j\rightarrow u_j+s$ for all $j\in(J\cup\{1\})$, and moving slightly the lines of integrations, we obtain 
\es{\label{tya4}
\sum_{\substack{\epsilon\in\{\pm1\}^{k},\\ \pm_11=-1}}\rho_{\Upsilon}(\epsilon)\sum_{\pm_*1\in\{\pm1\}} \mathcal S_{I;\epsilon^*;\balpha,\bbeta}&=\mathcal U_{I_1;\balpha,\bbeta}
}
where for $\mathcal I\subseteq \{1,\dots,k\}$ (with $|\mathcal I|\geq2$), $\mathcal J:=\{1,\dots,k\}\setminus\mathcal I$, we define
\es{\label{wefcw}
\mathcal U_{\mathcal I;\balpha,\bbeta}&:=-2^{|\mathcal  J|}i^{|\Upsilon\cap\mathcal  J|}\sum_\ell\sumstar_{h\mod \ell}\frac{P(\ell/L)}{(2\pi i)^{1+k}}\int_{(c_s,\bcu)} \frac{q^{ks-1}}{\pi\ell^{\sum_{i\in \mathcal I}(1-\alpha_i+\beta_i)}} \frac{H''_{\mathcal I;\balpha,\bbeta}(s)}{s} \\
&\quad \times  \bigg(\prod_{j\in \mathcal J}D_{j;\alpha_j,\beta_j}\pr{\tfrac12+u_j+s,\tfrac{ h}{\ell}}\tilde P(u_j)N_j^{u_j}\bigg)\Gamma\Big(1+|\mathcal I|(s-\tfrac12)+\sum_{i\in \mathcal I}(u_i+\alpha_i)\Big)\\
&\quad\times\sin\prBig{\frac{\pi}{2}|\Upsilon\cap\mathcal I|+\frac{\pi }{2}|\mathcal I|(s-\tfrac12)+\frac{\pi }2\sum_{i\in \mathcal I}(u_i+\alpha_i)} \\
&\quad\times F(1+|\mathcal I|(s-\tfrac12)+\sum_{i\in \mathcal I}(u_i+\alpha_i),\tfrac{qh}\ell) \bigg(\prod_{i\in \mathcal I}\frac{\pi^\frac12\tilde P(u_i)N_i^{u_i}}{(2q)^{u_i+\alpha_i+s-\frac12}}\frac{\Gamma_i(\frac14-\frac {u_i+\alpha_i+s}2)}{\Gamma_i(\frac14+\frac {u_i+\alpha_i+s}2)}\bigg) ds\bdu,\\
}
with lines of integration
\es{\label{addwe}
 c_{u_i}=\frac12-\Re(\alpha_i)-3\frac \eps k-\eps \ \forall i\in \{1,\dots,k\} \quad c_s:=\eps/k,
}
and, recalling~\eqref{hgfc2},
\es{\label{raf}
H''_{\mathcal {I};\balpha,\bbeta}(s):=G_{\balpha,\bbeta}(s)g_{\balpha,\bbeta}(s)\prod_{i\in \mathcal I}\zeta(1-\alpha_i+\beta_i).\\
}
For future use we remark that if we move $c_{u_i'}$ to $c_{u_i'}=-\frac12-\Re(\alpha_1)-5\eps$ for some $i'\in \mathcal I$ we
 get
\es{\label{fms2}
\mathcal U_{\mathcal I;\balpha,\bbeta} \ll N_{i'}^{-1+A\eps}q^{A\eps}(N_1\cdots N_k)^{\frac12}L^{-\eps}.
}
Also, if $|\mathcal I|>|\mathcal J|$, then moving the line of integration to $c_{u_j}$ to $c_{u_j}= -\frac12+5\frac{\eps}{k}-\eps$ for all $j\in\mathcal J$ (leaving the other lines of integration as in~\eqref{addwe}), we obtain
\es{\label{fms}
\mathcal U_{\mathcal I;\balpha,\bbeta} \ll q^{-1+A\eps}(N_1\cdots N_k)^{\frac12}L^{-\eps}\prod_{j\in \mathcal J}N_j^{-1+A\eps}.
}
Finally, moving $c_s$ to $c_s=\frac12+B-3\frac \eps k$ and $c_{u_i}$ to $c_{u_i}=-B$ for all $i=1,\dots, k$ we obtain
\es{\label{tb3c}
\mathcal U_{\mathcal I;\balpha,\bbeta} \ll_B (N_1\cdots N_k/q^k)^{-B}q^{\frac k2-1+A\eps},
}
if $|\mathcal I|\geq2$. 
 
We are now ready to complete the proof of Lemma~\ref{ctd}. Using~\eqref{tya},~\eqref{tya2} and~\eqref{tya4} we obtain
\est{
\sum_{\epsilon\in\{\pm1\}^k}\hspace{-0.2em}\rho_{\Upsilon}(\epsilon)\mathcal O''_{\epsilon,\alpha,\beta}
&=\sumdagger_L\sum_{\substack{I\cup J=\{2,\dots,k\},\\  I\cap J=\emptyset,\  |I|>|J|}}\,\sum_{\substack{\{\alpha_i^\prime,\beta_i^\prime\}=\{\alpha_i,\beta_i\}\ \forall i\in I_1,\\ (\alpha_j^\prime,\beta_j^\prime)=(\alpha_j,\beta_j)\ \forall j\in J}}2\,\mathcal U_{I_1;\balpha,\bbeta}\\
&\quad+O\pr{  \frac{N_1^{A\eps}}{q^{1-A\eps} } \pr{\frac{q^\vartheta N_1^{\frac {k}2}+q^{\frac k2-\frac13+\frac\vartheta 3} }{(N_2\cdots N_k)^{\frac12}}+(q^{\frac16+\frac \vartheta3} N_1^{-\frac12}+1)(N_2\cdots N_k)^\frac12}}\\
&=\mathcal M_{\alpha,\beta}+O\pr{  \frac{N_1^{A\eps}}{q^{1-A\eps} } \pr{\frac{q^\vartheta N_1^{\frac {k}2}+q^{\frac k2-\frac13+\frac\vartheta 3} }{(N_2\cdots N_k)^{\frac12}}+(q^{\frac16+\frac \vartheta3} N_1^{-\frac12}+1)(N_2\cdots N_k)^\frac12}}\\
}
where
\es{\label{dfmm}
\mathcal M_{\alpha,\beta}(N_1,\cdots,N_k):=\sumdagger_L\sum_{\substack{\mathcal I\cup \mathcal J=\{1,\dots,k\},\\  \mathcal I\cap\mathcal J=\emptyset,\  |\mathcal I|>|J|+1}}\,\sum_{\substack{\{\alpha_i^\prime,\beta_i^\prime\}=\{\alpha_i,\beta_i\}\ \forall i\in\mathcal I,\\ (\alpha_j^\prime,\beta_j^\prime)=(\alpha_j,\beta_j)\ \forall j\in\mathcal J}}2\,\mathcal U_{\mathcal I;\balpha,\bbeta}.
}
and $\mathcal U_{\mathcal I;\balpha,\bbeta}$ as defined in~\eqref{wefcw}. Noticed that in the last step we used~\eqref{fms} to extend the sum over the subsets of $\{1\,\dots,k\}$ to include also the sets $\mathcal I$ that do not contain $1$. Moreover, by~\eqref{fms2} and~\eqref{fms} we also have
\est{
\mathcal M_{\alpha,\beta}(N_1,\cdots,N_k)\ll q^{A\eps}(N_1\cdots N_k)^{\frac12}N_1^{-1+A\eps}.
}
and thus the proof of Lemma~\ref{ctd} is complete. Also, by~\eqref{tb3c} for any $B>0$ we have 
\es{\label{tb3c2}
\mathcal M_{\alpha,\beta}(N_1,\cdots,N_k)\ll_B q^{\frac k2-1+A\eps}(N_1\cdots N_k/q^k)^{-B}
}

We remark that we reached a formula for $\mathcal M_{\alpha,\beta}$ which is completely symmetric in the $N_1,\dots,N_k$. This is important in order to remove the assumption that $N_1$ is the largest of $N_1,\dots, N_k$, so that we can sum over the partitions of unity. 

\section{Assembling the main terms}\label{mmtt}
In this section we prove Lemma~\ref{smtmt}.

We start by moving $c_{u_i}$ to $c_{u_i}=0$ for all $i\in \mathcal I$ and $c_{s}$ to $\frac12-3\frac\eps{k}$ (we can do this without passing through any pole nor having problem of convergence). Then, after extending the sum over the partitions of unity $L,N_1,\dots,N_k$ using~\eqref{tb3c2} and summing over them using Lemma~\ref{sopu} we obtain
\est{
 \sumdagger_{\substack{N_1,\dots,N_k,\\ N_1\cdots N_k\ll q^{k+\eps}}}\mathcal M_{\alpha,\beta}(N_1,\cdots,N_k)=\sum_{\substack{\mathcal I\cup \mathcal J=\{1,\dots,k\},\\  \mathcal I\cap\mathcal J=\emptyset,\  |\mathcal I|>|\mathcal J|+1}}\,\sum_{\substack{\{\alpha_i^\prime,\beta_i^\prime\}=\{\alpha_i,\beta_i\}\ \forall i\in\mathcal I,\\ (\alpha_j^\prime,\beta_j^\prime)=(\alpha_j,\beta_j)\ \forall j\in\mathcal J}}2\,\mathscr{V}_{\mathcal I;\balpha,\bbeta}+O(1)
}
with
\es{\label{dfys}
\mathscr V_{\mathcal I;\balpha,\bbeta}&:=-2^{|\mathcal J|}i^{|\Upsilon\cap \mathcal J|}\frac{1}{2\pi i}\int_{(c_s)}\sum_\ell\sumstar_{h\mod \ell} \frac{q^{\frac12|\mathcal I|+|\mathcal J|s-1}}{\ell^{\sum_{i\in \mathcal I}(1-\alpha_i+\beta_i)}} \Gamma\Big(1+|\mathcal I|(s-\tfrac12)+\sum_{i\in \mathcal I}\alpha_i\Big) \\
&\quad \times  \bigg(\prod_{j\in \mathcal J}D_{j;\alpha_j,\beta_j}\pr{\tfrac12+s,\tfrac{ h}{\ell}}\bigg)\bigg(\prod_{i\in \mathcal I}\frac{(2\pi)^\frac12}{2^{\alpha_i+s}q^{\alpha_i}}\frac{\Gamma_i(\frac14-\frac {\alpha_i+s}2)}{\Gamma_i(\frac14+\frac {\alpha_i+s}2)}\bigg)\frac{H''_{\mathcal I;\balpha,\bbeta}(s)}{\pi s} \\
&\quad \times
 F(1+|\mathcal I|(s-\tfrac12)+\sum_{i\in \mathcal I}\alpha_i,\tfrac{h}\ell)
 \sin\prBig{\frac{\pi}{2}|\Upsilon\cap\mathcal I|+\frac{\pi }{2}|\mathcal I|(s-\tfrac12)+\frac{\pi }2\sum_{i\in \mathcal I}\alpha_i}  ds,\\
}
and line of integration $c_s=\frac12-3\frac\eps{k}$. 

Now, for each integral we move the line of integration $c_s$ to 
\est{ 
c_s=\max\pr{0,-\frac12+\frac{|\mathcal J|+\frac32}{|\mathcal I|+|\mathcal J|-\frac12}}+9\frac \eps k
=\begin{cases}
\frac3{4k-2}+9\frac \eps k& \text{if $|\mathcal I|=|\mathcal J|+2=\frac k2+1$ with $k$ even,}\\
9\frac \eps k& \text{if $|\mathcal I|>|\mathcal J|+2$.}
\end{cases}
}
picking up the residue of the pole of the $\Gamma$-function at 
\est{
s'=s'(\balpha)=\frac12-\frac{1+\sum_{i\in \mathcal I}\alpha_i}{|\mathcal I|}
}
(unless $k=4$, $|\mathcal I|=3$ in which case we stay on the right of such pole).
Notice that Lemma~\ref{DIbound2} guarantees the convergence of the sum over $\ell$ on the new line of integration. Also, a quick computation shows that if $\mathcal I\neq I_k:=\{1,\dots, k\}$ (and $|\mathcal I|>|\mathcal J|+1$) then
\est{
\frac12|\mathcal I|+|\mathcal J|c_s-1\leq\frac k2-\frac 32 +\iota_k+9\frac \eps k 
}
where $\iota_k=\frac3{14}$ if $k=4$ and $\iota_k=0$ otherwise. In particular, if  $\mathcal I\neq I_k$, then by~\eqref{shjgk} the contribution of the integral on the new line of integration is $O(q^{\frac k2-\frac32+\iota_k+A\eps})$ and we obtain
\es{\label{aedw1}
\mathscr V_{\mathcal I;\balpha,\bbeta}=\mathscr X_{\mathcal I;\balpha,\bbeta}+O(q^{\frac k2-\frac32+\iota_k+A\eps}),
}
where 
\est{
\mathscr X_{\mathcal I;\balpha,\bbeta}&:=-\frac{2^{|\mathcal J|}i^{|\Upsilon\cap \mathcal J|}}{|\mathcal I|}\sum_\ell\sumstar_{h\mod \ell} \frac{q^{\frac12|\mathcal I|+|\mathcal J|s'-1}}{\ell^{\sum_{i\in \mathcal I}(1-\alpha_i+\beta_i)}}  F(0,\tfrac{h}\ell)
 \sin\prBig{\frac{\pi }{2}(|\Upsilon\cap\mathcal I|-1)}\\
&\quad \times  \bigg(\prod_{j\in \mathcal J}D_{j;\alpha_j,\beta_j}\pr{\tfrac12+s',\tfrac{ h}{\ell}}\bigg)\bigg(\prod_{i\in \mathcal I}\frac{(2\pi)^\frac12}{2^{s'+\alpha_i}q^{\alpha_i}}\frac{\Gamma_i(\frac14-\frac {\alpha_i+s'}2)}{\Gamma_i(\frac14+\frac {\alpha_i+s'}2)}\bigg)   \frac{H''_{\mathcal I;\balpha,\bbeta}(s')}{\pi s'}. \\
}
(Notice that~\eqref{aedw1} holds also in the case $k=4$, $|\mathcal I|=3$, since from~\eqref{fef12} and a trivial bound it follows that $\mathscr X_{\mathcal I;\balpha,\bbeta}$ is convergent and $O(q^{\frac23+A\eps})=O(q^{\frac k2-\frac32+\iota_k+A\eps})$).

If $\mathcal I=I_k$, then 
\est{
\mathscr V_{I_k;\balpha,\bbeta}=\mathscr X_{I_k;\balpha,\bbeta}+\mathscr V'_{I_k;\balpha,\bbeta},
}
where $\mathscr V'_{I_k;\balpha,\bbeta}$ is as in~\eqref{dfys}, but with the line of integration $c_s=9\eps/k$.

Now, notice that if $|\Upsilon\cap \mathcal J|$ is odd  (and thus so is $|\Upsilon\cap \mathcal I|$ since $|\Upsilon|$ is even), then the sine in the expression defining $\mathscr X_{\mathcal I;\balpha,\bbeta}$ is equal to $0$ and thus so is $\mathscr X_{\mathcal I;\balpha,\bbeta}$. If $|\Upsilon\cap \mathcal J|$ is even, then changing $h$ into $-h$, $\epsilon_\mathcal J$ into $-\epsilon_\mathcal J$ and using the identity $F(0,h/\ell)+F(0,-h/\ell)=-1$ (which follows immediately from~\eqref{fef12}), we obtain $\mathscr X_{\mathcal I;\balpha,\bbeta}=-\mathscr X_{\mathcal I;\balpha,\bbeta}+\mathscr K_{\mathcal I;\balpha,\bbeta}$ and so
$\mathscr X_{\mathcal I;\balpha,\bbeta}=\frac12\mathscr K_{\mathcal I;\balpha,\bbeta}$,
where
\est{
\mathscr K_{\mathcal I;\balpha,\bbeta}&:=-\frac{2^{|\mathcal J|}}{|\mathcal I|}\sum_\ell\sumstar_{h\mod \ell} \frac{q^{\frac12|\mathcal I|+|\mathcal J|s'-1}}{\ell^{\sum_{i\in \mathcal I}(1-\alpha_i+\beta_i)}}  \bigg(\prod_{j\in \mathcal J}D_{j;\alpha_j,\beta_j}\pr{\tfrac12+s',\tfrac{ h}{\ell}}\bigg)
 \\
&\quad \times  \bigg(\prod_{i\in \mathcal I}\frac{(2\pi)^\frac12}{2^{s+\alpha_i}q^{\alpha_i}}\frac{\Gamma_i(\frac14-\frac {\alpha_i+s'}2)}{\Gamma_i(\frac14+\frac {\alpha_i+s'}2)}\bigg)   \frac{H''_{\mathcal I;\balpha,\bbeta}(s')}{\pi s'} \\
&\,=-\mathscr D'_{\mathcal I;\balpha,\bbeta},
}
where $\mathscr D'_{\mathcal I;\balpha,\bbeta}$ is as in~\eqref{mtmt},
since $\frac12|\mathcal I|+|\mathcal J|s'-1=ks'+(\frac{1}{2}-s')|\mathcal I|-1=ks'+\sum_{i\in\mathcal I}\alpha_i$ and by the definition~\eqref{raf} of $H''_{\mathcal I;\balpha,\bbeta}(s')$.
It follows that
\est{
\sum_{\substack{\mathcal I\cup \mathcal J=\{1,\dots,k\},\\  \mathcal I\cap\mathcal J=\emptyset,\\  \frac{k}2+\frac34<|\mathcal I|}}\,\sum_{\substack{\{\alpha_i^\prime,\beta_i^\prime\}=\{\alpha_i,\beta_i\}\ \forall i\in\mathcal I,\\ (\alpha_j^\prime,\beta_j^\prime)=(\alpha_j,\beta_j)\ \forall j\in\mathcal J}}2\mathscr{V}_{\mathcal I;\balpha,\bbeta}
&=\sum_{\substack{\{\alpha_i^\prime,\beta_i^\prime\}=\{\alpha_i,\beta_i\}\ \forall i\in I_k}} 2\mathscr V'_{I_k;\balpha,\bbeta},-\mathscr D_{\balpha,\bbeta}+O(q^{\frac k2-\frac12+\iota_k+A\eps})
}
and thus to conclude the proof of Lemma~\ref{smtmt}, we just need to show $\mathscr V'_{I_k;\balpha,\bbeta}+X_{\bbeta,\balpha}\mathscr V'_{I_k;-\bbeta,-\balpha}=\mathscr M_{\balpha,\bbeta}$ with $\mathscr M_{\balpha,\bbeta}$ as in~\eqref{gabcd}. First, we need the following lemma.

\begin{lemma}
For $\Re(s_1+s_1)>2$ and $\Re(s_2)>1$ we have
\es{\label{aq4}
\sum_\ell\frac1{\ell^{s_2}}\sumstar_{h\mod \ell} F(s_1,\tfrac{h}\ell)=\frac{\zeta(s_1)\zeta(s_1+s_2-1)}{\zeta(s_2)}.
}
\end{lemma}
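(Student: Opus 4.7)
The plan is to prove this by expanding $F(s_1, h/\ell)$ via its defining Dirichlet series and exploiting a collapse of the inner sum. For $\Re(s_1)>1$ we have $F(s_1, h/\ell) = \sum_{n\geq 1} e(nh/\ell) n^{-s_1}$, so summing over primitive residues produces the Ramanujan sum:
\[
\sumstar_{h \mod \ell} F(s_1, h/\ell) = \sum_{n\geq 1} \frac{c_\ell(n)}{n^{s_1}}.
\]
After interchanging the $n$ and $\ell$ summations, I would apply the classical identity $\sum_{\ell\geq 1} c_\ell(n)\ell^{-s_2} = \sigma_{1-s_2}(n)/\zeta(s_2)$, valid for $\Re(s_2)>1$, and then the standard divisor Dirichlet series $\sum_{n\geq 1}\sigma_{1-s_2}(n)/n^{s_1} = \zeta(s_1)\zeta(s_1+s_2-1)$, to arrive at the claimed formula $\zeta(s_1)\zeta(s_1+s_2-1)/\zeta(s_2)$.

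An equally short self-contained route avoids invoking Ramanujan's identity: remove the coprimality condition by Möbius inversion, writing $h = dh'$ with $d=\gcd(h,\ell)$ and $\ell = d\ell'$, to get
\[
\sum_\ell \frac{1}{\ell^{s_2}} \sumstar_{h \mod \ell} F(s_1, h/\ell) = \bigg(\sum_{d\geq 1}\frac{\mu(d)}{d^{s_2}}\bigg)\sum_{\ell'\geq 1} \frac{1}{\ell'^{\,s_2}} \sum_{h'=1}^{\ell'} F(s_1, h'/\ell').
\]
The complete inner sum over $h'$ collapses by additive orthogonality: only $n$'s divisible by $\ell'$ contribute, giving $\sum_{h'=1}^{\ell'} F(s_1, h'/\ell') = \ell'\sum_{m\geq 1}(\ell' m)^{-s_1} = \zeta(s_1)\,\ell'^{\,1-s_1}$. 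Substituting this in, the $d$ and $\ell'$ sums separate and evaluate respectively to $1/\zeta(s_2)$ and $\zeta(s_1+s_2-1)$, producing the asserted identity.

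The only point requiring attention is justifying the manipulations throughout the full region advertised, since $\Re(s_2)>1$ together with $\Re(s_1+s_2)>2$ does not force $\Re(s_1)>1$. To handle this I would first establish the identity in a region of joint absolute convergence (for instance $\Re(s_1),\Re(s_2)>2$), where the Dirichlet series of $F$ converges absolutely and every exchange of summation is trivially legal, and then extend to the region stated in the lemma by analytic continuation, both sides being meromorphic in $(s_1,s_2)$. This is really the only delicate step; the rest is direct manipulation of Dirichlet series.
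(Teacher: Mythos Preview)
Your approach is essentially the same as the paper's: expand $F$ into its Dirichlet series, collect the Ramanujan sums, apply Ramanujan's identity (the paper's equation for $\tau_{a,b}$), and then extend by analytic continuation from the region of absolute convergence. The alternative M\"obius route you sketch is a pleasant variant but not needed.

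There is one genuine gap, though. You write ``extend to the region stated in the lemma by analytic continuation, both sides being meromorphic in $(s_1,s_2)$,'' but you have not shown that the \emph{left-hand side} is meromorphic on the full region $\Re(s_2)>1$, $\Re(s_1+s_2)>2$. This is not automatic: the left-hand side is an infinite sum over $\ell$, and for $\Re(s_1)\leq 1$ the inner sum $\sumstar_{h}F(s_1,h/\ell)$ is defined only by analytic continuation of $F$, so you need a bound on it in $\ell$ to know the outer sum converges. The paper supplies exactly this: using the functional equation for $F$ and the Phragm\'en--Lindel\"of principle, it shows
\[
\sumstar_{h\mod \ell}|F(s_1,h/\ell)|\ll 1+\ell^{1-\Re(s_1)+\eps},
\]
which is what makes the $\ell$-sum converge for $\Re(s_1+s_2)>2$. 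Once you insert this step, your proof is complete and matches the paper's.
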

\begin{proof}
From the functional equation for $F(s,x)$ and the Phragm\'en-Lindel\"of principle one sees that if $|s_1-1|>\eps'>0$, then if 
\est{
\sumstar_{h\mod \ell} |F(s_1,\tfrac{h}\ell)|\ll_{s,\eps,\eps'} 1+\ell^{1-\Re(s_1)+\eps}
} 
for all $\eps>0$. It follows that the left hand side of~\eqref{aq4} defines a meromorphic function in $s_1,s_2$ on the region $\Re(s_2)>1$, $\Re(s_1+s_2)>2$. Now, assume $\Re(s_1),\Re(s_2)>1$. 
Expanding $F$ into its Dirichlet expansion~\eqref{dff}, executing the sum over $h$, and using~\eqref{hga}, we obtain
\est{
\sum_\ell\frac1{\ell^{s_2}}\sumstar_{h\mod \ell} F(s_1,\tfrac{h}\ell)&=\sum_\ell\frac1{\ell^{s_2}}\sum_{n}\frac{c_{\ell}(n)}{n^{s_1}}=\frac1{\zeta(s_2)}\sum_{n}\frac{\sigma_{1-s_2}(n)}{n^{s_1}}=\frac{\zeta(s_1)\zeta(s_1+s_2-1)}{\zeta(s_2)}.
}
The Lemma then follows by analytic continuation.
\end{proof}

Applying this Lemma,  we see that
\est{
\mathscr V'_{I_k;\balpha,\bbeta}&=-\frac{q^{\frac k2-1}}{2\pi i}\int_{(c_s)} \Gamma\Big(1+ks-\frac k2+\sum_{i=1}^k\alpha_i\Big)  
 \frac{\zeta(1+ks-\frac k2+\sum_{i=1}^k\alpha_i)\zeta(\frac k2+ks+\sum_{i=1}^k\beta_i)}{\zeta(k-\sum_{i=1}^k(\alpha_i-\beta_i))}\\
 &\quad \times  
 \sin\prBig{\frac{\pi }{2}\Big(|\Upsilon|-\frac k2+ks+\sum_{i=1}^k\alpha_i\Big)} \bigg(\prod_{i=1}^k\frac{\Gamma_i(\frac14-\frac {\alpha_i+s}2)}{\Gamma_i(\frac14+\frac {\alpha_i+s}2)}\frac{\pi^{\frac12}q^{-\alpha_i}}{2^{s-\frac12+\alpha_i}}\bigg) \frac{H''_{I_k;\balpha,\bbeta}(s)}{\pi s} ds,\\
}
so that by the functional equation (using that $|\Upsilon|$ is even) and the definition~\eqref{raf} of $H''$ we obtain
\est{
\mathscr V'_{I_k;\balpha,\bbeta}
&=(-1)^{\frac{|\Upsilon|}2}\frac{q^{\frac k2-1}}{2\pi i}\int_{(c_s)}  
 \frac{\zeta(\frac k2-ks-\sum_{i=1}^k\alpha_i)\zeta(\frac k2+ks+\sum_{i=1}^k\beta_i)}{\zeta(k-\sum_{i=1}^k(\alpha_i-\beta_i))}\\
 &\quad \times  G_{\balpha,\bbeta}(s)\bigg(\prod_{i=1}^k\zeta(1-\alpha_i+\beta_i)\frac{\Gamma_i(\frac14-\frac {\alpha_i+s}2)\Gamma_i(\frac14+\frac {\beta_i+s}2)}{\Gamma_i\prst{\tfrac {\frac12+\alpha_i}2}\Gamma_i\pr{\tfrac {\frac12+\beta_i}2}}\pr{\frac{q}\pi}^{-\alpha_i}\bigg) \frac{ds}s.\\
 }
Notice that changing $s$ into $-s$ we obtain exactly $-X_{\bbeta,\balpha}$ times the analogous term coming from $\mathscr V'_{I_k;-\bbeta,-\balpha,}$ but with line of integration $c_s=-9\frac \eps k$. Thus, $\mathscr V'_{I_k;\balpha,\bbeta}+X_{\bbeta,\balpha}\mathscr V'_{I_k;-\bbeta,-\balpha,}$ coincides with the residue of the above integral at $s=0$, that is
\est{
\mathscr V'_{I_k;\balpha,\bbeta}+X_{\bbeta,\balpha}\mathscr V'_{I_k;-\bbeta,-\balpha,}&=(-1)^{\frac{|\Upsilon|}2}  q^{\frac k2-1}
 \frac{\zeta(\frac k2-\sum_{i=1}^k\alpha_i)\zeta(\frac k2+\sum_{i=1}^k\beta_i)}{\zeta(k-\sum_{i=1}^k(\alpha_i-\beta_i))}\\
 &\quad \times  \prod_{i=1}^k\zeta(1-\alpha_i+\beta_i)\frac{\Gamma_i(\frac14-\frac {\alpha_i}2)}{\Gamma_i\prst{\frac14+\tfrac {\alpha_i}2}}\pr{\frac{q}\pi}^{-\alpha_i}.\\
}
Thus, Lemma~\ref{smtmt} follows.  
\comment{
(we had a factor of $\frac{i^{|\Upsilon|}}{2^{k}}$ and a $2$ from~\eqref{tac}).
}


\section{The terms far from the diagonal}\label{ttftd}
We will use the following result of Young, to prove Lemma~\ref{pfofa}.
\begin{lemma}\label{YL}
Let $L,K\ll q^{1+\eps}$ and let $W$ be a smooth function with compact support on $\R_{>0}$. Then,
\est{
\sum_{0< \ell < L}\bigg|\sum_{\substack{(k,q)=1}} \e{\frac{\ell \overline k}q}W\pr{\frac{k}{K}}\bigg|\ll 
L^\frac12q^{\frac34+\eps}+q^{\eps}K^\frac12L.
}
\end{lemma}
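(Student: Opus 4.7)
The plan is to express $T(\ell):=\sum_{(k,q)=1}W(k/K)\e{\ell\bar k/q}$ as a linear combination of Kloosterman sums via Poisson summation, then apply Cauchy--Schwarz in $\ell$ and bound the resulting second moment using the Weil bound. Writing $k=qa+b$ with $b\in(\Z/q\Z)^{\times}$ so that $\bar k\equiv\bar b\pmod q$, and applying Poisson summation in $a$ to the cutoff $W((qa+b)/K)$, one obtains
\est{
T(\ell)=\frac{K}{q}\sum_{n\in\Z}\hat W(nK/q)\,S(\ell,-n;q),
}
where $S$ is the classical Kloosterman sum and $\hat W$ is Schwartz. The $n=0$ term is $-(K/q)\hat W(0)$ when $q\nmid \ell$ and $(q-1)(K/q)\hat W(0)$ when $q\mid\ell$; its total contribution to $\sum_{\ell<L}|T(\ell)|$ is $\ll LK/q+K$, which is $\ll q^{\eps}LK^{1/2}$ using $K\ll q^{1+\eps}$, and is absorbed into the second term of the bound.

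For the $n\neq 0$ part $A(\ell)$, Cauchy--Schwarz gives $\sum_{\ell<L}|A(\ell)|\le L^{1/2}(\sum_{\ell<L}|A(\ell)|^{2})^{1/2}$. I would bound $\sum_\ell|A(\ell)|^{2}$ after majorizing $\mathbf 1_{[0,L)}$ by a smooth function $\psi$, opening the Kloosterman sums, and applying Poisson summation to the resulting $\ell$-sum. This Poisson step converts $\sum_\ell \psi(\ell/L)\e{\ell(x-y)/q}$ into essentially $L\cdot\mathbf 1_{x\equiv y\pmod q}$ up to a tail of size $L(1+L|x-y|/q)^{-A}$, for $A$ arbitrarily large.

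The diagonal contribution $x\equiv y\pmod q$, summed against $|\hat W(nK/q)|^{2}$ via Plancherel (which gives $\sum_n|\hat W(nK/q)|^{2}\ll q/K$), is bounded by $(K/q)^{2}\cdot Lq\cdot (q/K)\asymp LK$. For the off-diagonal $x\neq y$, set $m:=x-y\not\equiv 0\pmod q$: the inner $x$-sum becomes the complete additive character sum $\sum_{x}\e{(-n_{1}\bar x+n_{2}\overline{x-m})/q}$ of a rational function over $\Z/q\Z$ with two simple poles. Since $n_{1},n_{2}\not\equiv 0$ and $m\not\equiv 0$ modulo $q$, the Weil bound gives $O(q^{1/2})$. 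Combining with $\sum_{m\neq 0}L(1+L|m|/q)^{-A}\ll q$ and $(\sum_n|\hat W(nK/q)|)^{2}\ll (q/K)^{2}$, the total off-diagonal contribution is $(K/q)^{2}\cdot(q/K)^{2}\cdot q\cdot q^{1/2}=q^{3/2+\eps}$.

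Adding the two pieces yields $\sum_\ell|A(\ell)|^{2}\ll q^{\eps}(LK+q^{3/2})$, and Cauchy--Schwarz then gives $\sum_\ell|A(\ell)|\ll q^{\eps}(LK^{1/2}+L^{1/2}q^{3/4})$, which combined with the $n=0$ bound completes the proof. The critical step is the off-diagonal application of the Weil bound for the rational-function exponential sum above: this is the source of the $q^{3/4}$ saving, and the technically delicate part requires checking that $n_{1},n_{2},m$ are all nonzero modulo $q$, a condition automatically satisfied here since we have excluded $n=0$ from the sum and reduced off-diagonal $x\neq y$.
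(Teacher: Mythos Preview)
Your argument is correct and is essentially the standard route to this bound; the paper itself does not give a proof but simply cites Proposition~4.3 of Young~\cite{You11b}, whose argument proceeds along the same lines (Poisson to Kloosterman sums, Cauchy--Schwarz in $\ell$, then Weil for the resulting complete sums).

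One small point worth tightening: excluding $n=0$ does not by itself ensure $n\not\equiv 0\pmod q$, which is what the Weil bound for the rational-function sum actually requires. This is harmless, since any $n\neq 0$ with $q\mid n$ satisfies $|n|\geq q$, hence $|nK/q|\geq K\geq 1$ and $\hat W(nK/q)$ is negligible; the corresponding terms can be bounded trivially. Similarly, on the diagonal $x\equiv y$ you should note that the remaining $x$-sum is the Ramanujan sum $c_q(n_2-n_1)$, so one must also dispose of the cross terms $n_1\neq n_2$ (these contribute $O(L)$ from $c_q=-1$ when $q\nmid(n_1-n_2)$, and the case $q\mid(n_1-n_2)$ with $n_1\neq n_2$ is again negligible by the rapid decay of $\hat W$). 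With these cosmetic fixes your bound $\sum_\ell|A(\ell)|^2\ll q^{\eps}(LK+q^{3/2})$ stands.
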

\begin{proof}
This is Proposition~4.3 of~\cite{You11b} (notice that we removed the condition $(q,\ell)=1$ from the first sum; one can easily check the bound holds also in this case).
\end{proof}
\begin{proof}[Proof of Lemma~\ref{pfofa}]
For simplicity we shall take $\balpha=\bbeta=\mathbf 0:=(0,\dots,0)$, as the shifts don't play any role in this Lemma and the same argument with obvious modifications works also when $\alpha_i,\beta_i\ll 1/{\log q}$.

By symmetry, we can assume that $N_1$ is the maximum of the $N_i$ and that $N_2$ is the second largest. Also, we assume $N_1\cdots N_k\ll q^{ k+\eps}$ and $N_1\gg q^{1+3\eps}$, since otherwise the result is trivial.

Now, we start by observing that we can remove the condition $\pm_1n_1+\cdots +\pm_kn_k\neq 0$ in $\mathcal{O}''_{\epsilon}:=\mathcal{O}''_{\epsilon,\mathbf 0,\mathbf 0}$ at a cost of an admissible error:
\est{
\mathcal{O}''_{\epsilon}&=\sum_{d|q}d\frac{\mu(\tfrac qd)}{\varphi(q)}\sum_{\substack{d|(  \pm_1n_1\pm_2\cdots \pm_kn_k)}} 
\hspace{-0.5em}\frac{d(n_1)\cdots d(n_k)}{(n_1\cdots n_k)^{\frac12}}V_{\balpha,\bbeta}\pr{\frac{n_1\cdots n_k}{q^k}}P\prBig{\frac {n_{1}}{N_1}}\cdots P\prBig{\frac {n_{k}}{N_k}}\\
&\quad+O\prbig{q^{A\eps}(N_1\cdots N_k)^{\frac12}/N_1}.
}
Next, we decompose $n_1$ into $n_1=f_1g_1$ and attach to the new variables two partitions of unity so that  $f_1\asymp F_1, g_1\asymp G_1$ with $F_1\geq G_1$ and $F_1G_1\asymp N_1$.  Writing $V$ and $P(\frac {n_{1}}{N_1})$ in terms of their Mellin transform, we obtain
\es{\label{rv2a}
\mathcal{O}''_{\epsilon}&=\!\sumdagger_{\substack{F_1G_1\asymp N_1,\\ G_1\leq F_1}}\int'_{(c_s,c_{u_1})}q^{ks}N_1^{u_1}\hspace{-.7em}\sum_{\substack{0\leq |m|<M}}\hspace{-0.7em}w_m(s)\mathscr{A}_m(s+u_1)G_{\balpha,\bbeta}(s) g_{\balpha,\bbeta}\tilde P(u_1)\frac{ds}sdu_1\\
&\quad+O\prbig{q^{A\eps}(N_1\cdots N_k)^{\frac12}/N_1},\\
}
where $M:= \min(2kN_2,q)$, the $\int'$ indicates that the integrals are truncated at $|u_1|,|s|\leq q^{\eps}$, the lines of integrations are $c_{u_1}=0$, $c_s=\eps/k$, and
\est{
\mathscr{A}_m(s)&:=\sum_{d|q}d\,\frac{\mu(\tfrac qd)}{\varphi(q)}\sum_{g_1}\sum_{ \substack{f_1g_1\equiv m  \mod d}}\frac1{(f_1g_1)^{\frac12+s}}P\prBig{\frac {f_1}{F_1}}P\prBig{\frac {g_1}{G_1}},\\
w_m(s)&:=\sum_{\pm_2n_2\pm_3\cdots\pm_k n_k\equiv -m\mod q}\frac{d(n_2)\cdots d(n_k) }{(n_2\cdots n_k)^{\frac12+s}}P\prBig{\frac{n_2}{N_2}}\cdots P\prBig{\frac{n_k}{N_k}}.\notag
}
Now, we apply Poisson's summation formula to the sum over $f_1$ and we see that for $\Re(s)= \eps/k$
\est{
\mathscr{A}_m(s)&=\sum_{d|q}\,\frac{\mu(\tfrac qd)}{\varphi(q)} \sum_{g_1}\frac{P\prst{ {g_1}/{G_1}}}{g_1^{\frac12+s}}\hspace{-0.6em}\sum_{0\leq |\ell|\leq \frac{dq^{A\eps}}{(d,g_1)F_1}}\hspace{-0.6em}\e{\frac{\ell m\overline{g_1/(d,g_1)}}{d/(d,g_1)}}\hspace{-0.2em}\int_{0}^{\infty}\frac{P\prst{ {x}/{F_1}}}{x^{\frac12+s}}\e{-\frac{\ell x}{d}}\,dx+O(q^{-1})\\
&=\mathscr{A}^*_m(s)+O(q^{-1}),
}
where 
\est{
\mathscr{A}^*_m(s)&=\frac{F_1^{\frac12-s}}{\varphi(q)} \sum_{(g_1,q)=1}\frac{P\prst{ {g_1}/{G_1}}}{g_1^{\frac12+s}}\sum_{0< |\ell|\leq L}\e{\frac{\ell m\overline{g_1}}{q}}\int_{0}^{\infty}\frac{P\prst{ {x}}}{x^{\frac12+s}}\e{-\frac{\ell F_1 x}{q}}\,dx
}
and $L= q^{1+A\eps}/F_1$. Indeed, the sum over $\ell$ in the $d=1$ summands contains only the term $\ell=0$ which cancel with the $\ell=0$ term from $d=q$ (notice also that $(g_1,q)>1$ implies $\ell=0$).
Thus,~\eqref{rv2a} becomes
\est{
\mathcal{O}''_{\epsilon}&=\sumdagger_{\substack{F_1G_1\asymp N_1,\\ G_1\leq F_1}}\int'_{(c_s,c_{u_1})}q^{ks}N_1^{u_1}\sum_{\substack{0\neq |m|< M}}w_m(s)\mathscr{A}_m^*(s+u_1)G_{\balpha,\bbeta}(s) g_{\balpha,\bbeta}\tilde P(u_1)\frac{ds}sdu_1\\
&\quad+O(q^{\frac k2-\frac32+A\eps}+q^{A\eps}(N_1\cdots N_k)^{\frac12}/N_1),
}
since the contribution of the terms with $m=0$ can be bounded trivially by
\est{
&\ll \frac{L(F_1G_1)^\frac12}q(1+N_2/q)N_2^{-\frac12}(N_3\cdots N_k)^{\frac12}q^{A\eps}\\
&\ll q^{A\eps}N_2^{-\frac12}(N_3\cdots N_k)^{\frac12}+q^{-1+A\eps}(N_2\cdots N_k)^\frac12\ll q^{\frac k2-\frac32+A\eps}+q^{A\eps}(N_1\cdots N_k)^{\frac12}/N_1.\\
}
since $N_2^{-1}N_3\cdots N_k\leq N_4\cdots N_k\ll q^{k-3+A\eps}$. 
Also, we assume $N_1\leq F_1^2\leq q^{2+2A\eps}$, since otherwise $\mathscr{A}^*_m(s)$ is identically zero.

Changing the order of summation and integration and bounding trivially $G_{\balpha,\bbeta}(s) g_{\balpha,\bbeta}\tilde P(u_1)$, we see that 
\es{\label{rv3}
\mathcal{O}_{\epsilon}''&\ll \hspace{-0em}\sumdagger_{\substack{F_1G_1\asymp N_1\\ G_1\leq F_1}}\int'_{(c_s,c_{u_1})}q^{\eps}|E(s,u_1)|
\,|dsdu_1|+q^{\frac k2-\frac32+A\eps}+q^{A\eps}\frac{(N_1\cdots N_k)^{\frac12}}{N_1},
}
where
\est{
E(s,u_1)&:=\frac{F_1^{\frac12}}{\varphi(q)}\sum_{0< |\ell|\leq L} \sum_{0< |m|< M}|w_m(s)|\left|\sum_{(g_1,q)=1}\frac{P\prst{ {g_1}/{G_1}}}{g_1^{\frac12+s+u_1}}\e{\frac{\ell m\overline{g_1}}{q}}
\right|\\
&\ll \frac{F_1^{\frac12}}{qG_1^{\frac12}}\max_{0< |r|\leq R}c_r\sum_{0\neq |r|\leq R}\Bigg|\sum_{(g_1,q)=1}P\pr{\frac {g_1}{G_1}}\pr{\frac{G_1 }{g_1}}^{\frac12+s+u_1}\e{\frac{r\overline{g_1}}{q}}
\Bigg|,\\
}
with $R:= \min(2kLN_2,q)\leq 2k q^{1+A\eps}\min(N_2/F_1,1)$ and
\est{
c_r&:=\sum_{\substack{\ell m\equiv r\mod q ,\\  0<|m|\leq M,\ 0< |\ell|\leq L}}|w_{m}(s)|\ll  \sum_{\substack{0<|\ell|\leq L,n_2\asymp N_2,\dots,n_k\asymp N_k,\\(\pm_2n_2\pm_3\cdots\pm_k n_k)\ell\equiv -r\mod q}}q^{A\eps}d(n_2)\cdots d(n_2)(N_2\cdots N_k)^{-\frac12}\\
&\ll \sum_{\substack{0<|\ell|\leq L, |n|\ll k N_2,\\n\ell\equiv -r\mod q}}q^{A\eps} N_2^{-\frac12}(N_3\cdots N_k)^{\frac12}
\ll \sum_{\substack{|a|\ll k LN_2,\\ a \equiv -r\mod q}}d(a)q^{A\eps} N_2^{-\frac12}(N_3\cdots N_k)^{\frac12}\\
&\ll q^{A\eps} N_2^{-\frac12}(N_3\cdots N_k)^{\frac12}\pr{1+LN_2/q}.\\
}
Thus, by Lemma~\ref{YL}, for $|s|,|u_1|\ll q^{A\eps}$, $\Re(s)=\eps/k,$ $\Re(u_1)=0$ we have
\est{
E(s,u_1)&\ll q^{-1+A\eps} F_1^{\frac12}(G_1N_2)^{-\frac12}(N_3\cdots N_k)^{\frac12}\pr{1+LN_2/q}\pr{R^\frac12q^{\frac34}+G_1^\frac12R}\\
&\ll q^{A\eps} (F_1/G_1N_2)^{\frac12}(N_3\cdots N_k)^{\frac12} \pr{1+N_2/F_1} \min \pr{F_1^{-\frac12}N_2^\frac12q^{\frac14}+G_1^\frac12N_2/F_1,q^{\frac14}+G_1^\frac12}\\
&= q^{A\eps} (N_3\cdots N_k)^{\frac12} \pr{1+N_2/F_1} \min \pr{\frac{q^{\frac14}}{G_1^\frac12}+\frac{N_2^\frac12}{F_1^{\frac12}},\frac{F_1^{\frac12}}{N_2^{\frac12}}\frac{q^{\frac14}}{G_1^{\frac12}}+\frac{F_1^{\frac12}}{N_2^{\frac12}}}.\\
}
For $x,y>0$  we have $(1+{x}^2)\min(y+x,y/x+1/x)\leq(x+1)(y+1)$, whence
\es{\label{tresc}
E(s,u_1)
&\ll q^{A\eps} (N_3\cdots N_k)^{\frac12} \prbig{N_2^{\frac12}q^{\frac14}N_1^{-\frac12}+N_2^{\frac12}F_1^{-\frac12}+q^{\frac14+\eps}G_1^{-\frac12}+1},\\
&\ll q^{A\eps} (N_3\cdots N_k)^{\frac12} \prbig{N_2^{\frac12}N_1^{-\frac14}+q^{\frac34}N_1^{-\frac12}+1},\\
&\ll q^{A\eps}\prBig{(N_2\cdots N_k)^{\frac12} N_1^{-\frac14}+ (N_2\cdots N_k)^{\frac12-\frac{1}{2(k-1)}}\prbig{q^{\frac34}N_1^{-\frac12}+1} },\\
}
where in the second inequality we used that $N_1\gg q$, $F_1^{-\frac12}\leq N_1^{-\frac14}$, $G_1^{-\frac12}\asymp (F_1/N_1)^\frac12\leq N_1^{-\frac12}q^{\frac12+A\eps}$ and $N_1\gg q$, and in the third one that $N_3\cdots N_k\leq N_2^{k-2}$ (so that $N_3\cdots N_k\leq (N_2\cdots N_k)^{\frac{k-2}{k-1}}$).
The lemma then follows by inserting~\eqref{tresc} in~\eqref{rv3}.
\end{proof}
\comment{ 
------------------
CHECK FROM HERE
------------------

\begin{remark}
Using only Weil's bound: (not quite, because it's not complete, but more or less)
\est{
E(s,u_1)&\ll q^{k\eps} F_1^{\frac12}q^{-1}(G_1N_2)^{-\frac12}(N_3\cdots N_k)^{\frac12}\pr{1+kLN_2/q}q^{\frac12}R\\
&\ll q^{k\eps} F_1^{\frac12}q^{-1}(G_1N_2)^{-\frac12}(N_3\cdots N_k)^{\frac12}\pr{1+N_2/F_1}q^{\frac12}\min(qN_2/F_1,q)\\
&\ll q^{\frac12+k\eps} F_1^{\frac12}(G_1N_2)^{-\frac12}(N_3\cdots N_k)^{\frac12}N_2/F_1\\
&\ll q^{\frac12+k\eps} N_1^{-1}(N_1\cdots N_k)^{\frac12},\\
}
which is non-trivial for $N_1\gg q^{\frac32+\eps}$.

Using only Weil's, in Lemma~\ref{ctd} we would get the error term
\est{
O\pr{k^{Ak}\frac{q^{-1+k\eps}N_{\tn{max}}^{\frac34+\frac k2+\eps}}{(N_1\cdots N_k)^\frac12}+\cdots},
}
which is non-trivial for $N_1\ll q^{\frac{k}{\frac34+\frac k2}}$. Thus, we get the result if $q^{\frac{k}{\frac34+\frac k2}}>q^{\frac32}$, that is if $\frac{k}4>\frac98$, i.e. $k>\frac92$. Using Kloosterman's bound for Kloosterman sums, we would need $q^{\frac{k}{\frac78+\frac k2}}>q^{\frac74}$. Thus, $\frac k8>\frac{49}{32}$, i.e. $k>\frac{49}{4}$. Similarly, with a saving of $q^\delta$: $q^{\frac{k}{1-\frac{\delta}2+\frac k2}}>q^{2-\delta}$, i.e. $q^{\frac{k\delta}2}>q^{(2-\delta)(1-\delta/2)}$, i.e. $k>\frac {(2-\delta)^2}{\delta}$
\end{remark}

} 

\section{A Mellin formula}\label{amell}
In this section we prove a formula to separate the variables in expressions of the form $(\pm_1x_1\pm_2\cdots\pm_\kappa x_\kappa )^{-s}$ which generalizes the Mellin transforms given in the following Lemma.
\begin{lemma}
Let $x,y>0$. Then
\es{\label{ffor}
(x+y)^{-b}=\frac1{2\pi i}\int_{(c_v)}\frac{\Gamma(v)\Gamma(b-v)}{\Gamma(b)}x^{v-b}y^{-v}dv,
}
for $0<c_{v}<\Re(b)$. Moreover, for $\Re(b)<0<c_{w}$, we have
\es{\label{ffor2}
(x-y)^{-b}\chi_{\R_{>0}}(x-y)=\frac1{2\pi i}\int_{(c_w)}\frac{\Gamma(w)\Gamma(1-b)}{\Gamma(1-b+w)}x^{w-b}y^{-w}dw,
}
where $\chi_X(x)$ is the indicator function of the set $X$.

\end{lemma}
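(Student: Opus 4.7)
Both identities are consequences of Mellin inversion applied to the Beta integral. The plan is to reduce each to a one-variable Mellin pair by the substitution $t=y/x$ and then quote the standard formulas
\est{
\int_{0}^{\infty}\frac{t^{v-1}}{(1+t)^{b}}\,dt=\frac{\Gamma(v)\Gamma(b-v)}{\Gamma(b)}\quad(0<\Re(v)<\Re(b)),\qquad
\int_{0}^{1}t^{w-1}(1-t)^{-b}\,dt=\frac{\Gamma(w)\Gamma(1-b)}{\Gamma(1-b+w)}
}
for the Mellin transforms of $(1+t)^{-b}$ and $(1-t)^{-b}\chi_{(0,1)}(t)$ respectively. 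The second requires $\Re(w)>0$ and $\Re(1-b)>0$, a condition automatically met in our range $\Re(b)<0$.

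For \eqref{ffor}, writing $(x+y)^{-b}=x^{-b}(1+t)^{-b}$ with $t=y/x$ and applying Mellin inversion in $t$ along $\Re(v)=c_v\in(0,\Re(b))$ gives
\est{
(1+t)^{-b}=\frac{1}{2\pi i}\int_{(c_v)}\frac{\Gamma(v)\Gamma(b-v)}{\Gamma(b)}\,t^{-v}\,dv,
}
and the inversion is legitimate because Stirling's formula shows $\Gamma(v)\Gamma(b-v)\ll e^{-\pi|\Im(v)|}|\Im(v)|^{\Re(b)-1}$, so the integral converges absolutely. Multiplying through by $x^{-b}$ and relabelling with $y=tx$ yields the claim.

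For \eqref{ffor2}, the same substitution gives $(x-y)^{-b}\chi_{\R_{>0}}(x-y)=x^{-b}(1-t)^{-b}\chi_{(0,1)}(t)$. Mellin inversion applied to $(1-t)^{-b}\chi_{(0,1)}(t)$ along $\Re(w)=c_w>0$ produces
\est{
(1-t)^{-b}\chi_{(0,1)}(t)=\frac{1}{2\pi i}\int_{(c_w)}\frac{\Gamma(w)\Gamma(1-b)}{\Gamma(1-b+w)}\,t^{-w}\,dw,
}
which, after multiplying by $x^{-b}$ and restoring $y=tx$, is \eqref{ffor2}. The main (and essentially the only) subtle point is the convergence of this second integral: the ratio $\Gamma(w)/\Gamma(1-b+w)$ decays only like $|w|^{\Re(b)-1}$ along vertical lines, so absolute convergence of the Mellin inversion integral hinges on the hypothesis $\Re(b)<0$. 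Under this hypothesis Mellin inversion is valid at every point of continuity of $(1-t)^{-b}\chi_{(0,1)}(t)$, which covers all $t\neq 1$, i.e.\ all $x\neq y$, and the identity in the remaining measure-zero set is irrelevant. The truly conditional nature of the second formula (in contrast to the absolute convergence of the first) is what makes it awkward to iterate, which is exactly the difficulty the paper subsequently addresses in the proof of Lemma~\ref{sml}.
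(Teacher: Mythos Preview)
Your proof is correct. The paper does not actually prove this lemma: it is stated as a standard fact (the classical Mellin--Barnes representation of the Beta integral) and used as input for the proof of Lemma~\ref{sml}. Your derivation via the substitution $t=y/x$ and Mellin inversion of $(1+t)^{-b}$ and $(1-t)^{-b}\chi_{(0,1)}(t)$ is exactly the standard argument one would supply if pressed. One small remark: your caveat about the point $t=1$ (i.e.\ $x=y$) is unnecessary here, since for $\Re(b)<0$ the function $(1-t)^{-b}\chi_{(0,1)}(t)$ is actually continuous at $t=1$ (both one-sided limits vanish), so Mellin inversion applies without exception.
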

Equation~\eqref{ffor} can be used repeatedly to give a formula for $(x_1+\cdots+x_\kappa )^{-s}$ valid for $\Re(s)>0$. However, it is not straightforward to obtain a satisfactory formula valid in the case when there are some minus signs, as the integrals obtained by repeatedly applying~\eqref{ffor} and~\eqref{ffor2} are not absolutely convergent. The following Lemma overcomes this problem by introducing an extra integration.

\begin{lemma}\label{sml}
Let $\kappa \geq2$ and  $x_1,\dots x_\kappa >0$. Let  $\epsilon=(\pm_1,\cdots,\pm_\kappa 1)\in \{\pm1\}^\kappa $, with $\pm_11=-1$. Let $B\in\Z_{\geq0}$ be such that $\frac \kappa 2+\frac12<\Re(v_1)< B+1$. Moreover, let $c_{v_2},\dots,c_{v_\kappa },c'_{v_2},\dots,c'_{v_\kappa }>0$ be such that
\es{\label{cfml} 
\Re(v_1)+c_{v_2}+\cdots +c_{v_\kappa }<B+1<\Re(v_1)+c'_{v_2}+\cdots+c'_{v_\kappa }.
}
Then
\es{\label{tmf}
&(\pm_2x_2\pm_3\cdots\pm_\kappa x_{\kappa })^{v_1-1}\chi_{\R_>0}(\pm_2x_2\pm_3\cdots\pm_\kappa x_{\kappa })\\
&=\sum_{\substack {\nu=(\nu_2,\dots,\nu_\kappa )\in\Z_{\geq0}^{\kappa -1},\\\nu_2+\cdots+\nu_\kappa =B,\\\nu_{i}=0\text{ if $\pm_i=-1$}}}\frac{B!}{\nu_2!\cdots \nu_\kappa !}\frac1{(2\pi i)^{\kappa -1}}\Bigg(\int_{(c_{v_2},\dots,c_{v_\kappa })}-\int_{(c'_{v_2},\dots,c'_{v_\kappa })}\Bigg)\frac{\Psi_{\epsilon,B}(v_1,\dots,v_\kappa )}{x_2^{v_2-\nu_2}\cdots x_\kappa ^{v_\kappa -\nu_\kappa }}dv_2\cdots dv_\kappa ,\\[-1.5em]
}
where
\es{\label{dfgfs}
\Psi_{\epsilon,B}(s_1,\dots,s_\kappa )
&:=
\frac{\Gamma(s_1)\cdots\Gamma(s_\kappa )}{\Gamma(V_{+;\epsilon}(s_1,\dots,s_\kappa ))\Gamma(V_{-;\epsilon}(s_1,\dots,s_\kappa ))}\frac{G(B+1-s_1-\cdots-s_\kappa )}{B+1-s_1-\cdots-s_\kappa },\\
V_{\pm;\epsilon}(v_1,\dots,v_\kappa )&:=\sum_{\substack {1\leq i\leq \kappa ,\\\pm_i1=\pm1}}v_{i}
}
and $G(s)$ is any entire function such that $G(0)=1$ and $ G(\sigma+it)\ll e^{-C_1|t|}(1+|\sigma|)^{C_2|\sigma|}$ for some fixed $C_1,C_2>0$.

\end{lemma}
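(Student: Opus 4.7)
The plan is to prove~\eqref{tmf} by identifying the difference $\int_{(c)}-\int_{(c')}$ with a codimension-one residue and then recognizing the residue as an iterated Mellin--Barnes representation of $y^{v_1-1}\chi_{y>0}$, where $y=\pm_2 x_2 \pm_3\cdots\pm_\kappa x_\kappa$. First, I would observe that on the product of half-spaces $\{\Re(v_i)>0,\ i\geq 2\}$ the function $\Psi_{\epsilon,B}(v_1,\ldots,v_\kappa)$ is meromorphic with its only singularity coming from the factor $1/(B+1-v_1-\sum_{i\geq 2}v_i)$: the Gammas in the numerator are holomorphic there and the reciprocal Gammas in the denominator are entire. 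Hypothesis~\eqref{cfml} then says precisely that this hyperplane pole is strictly between the two product contours, so a $1$-parameter linear homotopy from $(c')$ to $(c)$ crosses it exactly once.

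Using $\Res_{u=0}(G(u)/u)=G(0)=1$, the contour difference becomes a $(\kappa-2)$-dimensional integral with integrand $-\Gamma(v_1)\prod_{i=2}^\kappa \Gamma(v_i)/[\Gamma(V_{+;\epsilon})\Gamma(V_{-;\epsilon})]$ times $\prod x_i^{\nu_i-v_i}$, restricted to the hyperplane $v_2+\cdots+v_\kappa=B+1-v_1$. To match this against $y^{v_1-1}\chi_{y>0}$, I would write $y^{v_1-1}=y^{v_1-1-B}y^B$ (noting $\Re(v_1-1-B)<0$ by the assumption $\Re(v_1)<B+1$), expand $y^B=\sum_\nu (B!/(\nu_2!\cdots\nu_\kappa!))\prod_{i=2}^\kappa(\pm_i x_i)^{\nu_i}$ by the multinomial theorem, and recognize the resulting sum of hyperplane residues as the iterated Mellin representation of $y^{v_1-1-B}\chi_{y>0}$ obtained by applying~\eqref{ffor} or~\eqref{ffor2} successively in the variables $x_\kappa, x_{\kappa-1},\ldots,x_2$, the choice at step $i$ being dictated by the sign $\pm_i$. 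The constraint $\nu_i=0$ for $\pm_i=-1$ (which enters because~\eqref{ffor2} applies only when the minus-sign variable is subtracted, not added) and the shape of $\Psi_{\epsilon,B}$ line up exactly with this iterative Mellin splitting. The base case $\kappa=2$ is immediate: the sum collapses to the single term $\nu_2=B$, and the residue at $v_2=B+1-v_1$ yields $x_2^{v_1-1}$ directly.

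The main obstacle will be rigorously justifying the contour deformation in $\C^{\kappa-1}$: the two product contours are not obviously homotopic through the complement of the pole hyperplane, and absolute convergence of both integrals must be controlled uniformly along the homotopy in order to apply Fubini and invoke the residue theorem. This is where the hypothesis $\Re(v_1)>\kappa/2+\tfrac12$ and the exponential decay of $G$ become crucial: a Stirling-type bound on the integrand---which I would formulate as the auxiliary Lemma~\ref{bfp} referenced in the footnote---provides the decay needed in all imaginary directions, particularly along the ``diagonal'' rays where the exponential decay contributions of the $\Gamma(v_i)$ factors are cancelled by the exponential blow-up of $1/[\Gamma(V_{+;\epsilon})\Gamma(V_{-;\epsilon})]$ and only the decay of $G(B+1-v_1-\sum v_i)$ survives. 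Packaging these estimates and verifying that the homotopy avoids the zero sets of the reciprocal Gammas and the trivial-zero hyperplanes $v_i\in\Z_{\leq 0}$ (ruled out by $c_{v_i},c'_{v_i}>0$) is the bulk of the technical work.
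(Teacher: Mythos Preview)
Your strategy---reduce the contour difference to a hyperplane residue, then recognize the residue as an iterated Mellin--Barnes formula---is morally close to the paper's, but there is a real gap in the recognition step. The iterated application of~\eqref{ffor} and~\eqref{ffor2} ``successively in the variables $x_\kappa,x_{\kappa-1},\ldots,x_2$'' does not work in that fixed order once there are mixed signs and $\kappa\geq 4$: after peeling off $x_\kappa$, the remaining partial sum $\pm_2 x_2\pm_3\cdots\pm_{\kappa-1}x_{\kappa-1}$ need not be positive, and both~\eqref{ffor} and~\eqref{ffor2} require the ``other'' argument to be a positive real number. So neither formula applies at the next step, and the iteration stalls. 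This is exactly the obstruction the lemma is designed to circumvent, and your plan walks straight into it. A related issue is that once you take the residue the $G$-factor disappears, and with it the decay along the diagonal directions you yourself flag as critical; you would then need a separate argument for absolute convergence of the $(\kappa-2)$-dimensional residue integral.

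The paper's proof avoids both problems by a different inductive scheme. Rather than peeling off variables in a fixed order, it observes that for $\kappa+1\geq 3$ two of the signs $\pm_2,\ldots,\pm_{\kappa+1}$ must agree, groups those two variables into a single positive quantity $x_i+x_j$, applies the inductive hypothesis to the resulting $\kappa$-variable expression (with the $G$-factor already present, so absolute convergence is guaranteed by Lemma~\ref{bfp}), and only then splits $(x_i+x_j)^{-v}$ using~\eqref{ffor}---which applies because both arguments are positive by construction. The base case is handled directly via~\eqref{ffor} or~\eqref{ffor2} together with a one-variable contour-shift that introduces the $G$-factor. The upshot is that~\eqref{ffor2} is used at most once, at the bottom of the induction, while every inductive step uses only~\eqref{ffor}; no negative intermediate sums ever arise and the $G$-factor is never lost. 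Your multivariable-residue picture could perhaps be made rigorous, but evaluating the residue would still require some version of this same-sign-pair trick, which is the key idea your sketch is missing.
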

\begin{remark}
If $\epsilon=(-1,\dots,-1)$, then $\Psi_{\epsilon}$ has to be interpreted as being identically zero.
%
\end{remark}
\begin{remark}
If $\xi(s)$ is Riemann $\xi$-function, then $G(s)=\xi(s)/\xi(0)$ satisfies the hypothesis of the Lemma.
%
\end{remark}

Before giving a proof for Lemma~\ref{sml}, we give the following Lemma which implies that the integrals in~\eqref{tmf} are absolutely convergent.

\begin{lemma}\label{bfp}
Let $s_i=\sigma_i+it_i$ for $i=1,\dots,\kappa $. Then, for some $A>0$ we have
\es{\label{feg}
\Psi_{\epsilon,B}(s_1,\dots,s_\kappa )&\ll \frac1{\delta^\kappa }\frac{(1+B+|\sigma_1|+\cdots+|\sigma_\kappa |)^{A(1+B+|\sigma_1|+\cdots+|\sigma_\kappa |)}}{(1+|t_1|)^{\frac12-\sigma_1}\cdots (1+|t_\kappa |)^{\frac12-\sigma_\kappa }(1+|t_1|+\cdots+|t_\kappa |)^{\sigma_1+\dots+\sigma_\kappa -1}},\\
}
provided that the $s_i$ are located at a distance greater than $\delta>0$ from the poles of $\Psi_{\epsilon}$.
\end{lemma}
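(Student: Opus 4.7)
The argument is a direct application of Stirling's formula together with the hypothesized decay of $G$. First I would use the uniform Stirling bound
\[
|\Gamma(\sigma+it)|^{\pm 1}\ll \delta^{-1}(1+|\sigma|)^{A|\sigma|}(1+|t|)^{\pm(\sigma-\frac12)}\,e^{\mp\frac{\pi}{2}|t|},
\]
valid when $\sigma+it$ is at distance at least $\delta$ from the non-positive integers (the $\delta^{-1}$ being needed only for the $-1$ version). Writing $s_j=\sigma_j+it_j$, $V_{\pm;\epsilon}=\Sigma_\pm + iT_\pm$ with $\Sigma_\pm=\sum_{\pm_i=\pm 1}\sigma_i$ and $T_\pm=\sum_{\pm_i=\pm 1}t_i$, and setting $T:=\sum_j|t_j|$, $\tau_\pm:=|T_\pm|$, $T_\star:=\sum_j t_j=T_++T_-$, $\Sigma:=\Sigma_++\Sigma_-$, applying Stirling to each of the $\kappa$ Gammas in the numerator and the two in the denominator, and using the hypothesis on $G$ together with $|B+1-\sum_j s_j|\geq\delta$ to handle the final factor, yields an estimate of the shape
\[
|\Psi_{\epsilon,B}|\ll \frac{\mathcal C}{\delta^\kappa}\, e^{-\frac{\pi}{2}(T-\tau_+-\tau_-)}\,e^{-C_1|T_\star|}\,\frac{\prod_j(1+|t_j|)^{\sigma_j-\frac12}}{(1+\tau_+)^{\Sigma_+-\frac12}(1+\tau_-)^{\Sigma_--\frac12}(1+|T_\star|)},
\]
with $\mathcal C:=(1+B+\sum|\sigma_j|)^{A(1+B+\sum|\sigma_j|)}$ absorbing every $\sigma$-dependent polynomial factor (including the $|B+1-\Sigma|^{A|B+1-\Sigma|}$ coming from $G$).

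The first exponential is $\leq 1$, since the triangle inequality gives $\tau_\pm\leq\sum_{\pm_i=\pm 1}|t_i|$ and hence $T-\tau_+-\tau_-\geq 0$. It then remains to compare the $t$-dependent polynomial factor above with the target $\prod_j(1+|t_j|)^{\sigma_j-\frac12}(1+T)^{1-\Sigma}$. I would perform a case split according to the size of $\tau_++\tau_-$: when $\tau_++\tau_-\geq T/2$ one has $(1+\tau_+)(1+\tau_-)\gtrsim(1+T)^2/4$ and straightforward algebra, absorbing any wrong-signed exponents $\Sigma_\pm-\tfrac12$ into $\mathcal C$ via trivial bounds of the form $(1+T)^{|\Sigma_\pm|}(1+\tau_\pm)^{-|\Sigma_\pm|}\leq \mathcal C$ (using $\tau_\pm\leq T$), yields $(1+\tau_+)^{\Sigma_+-\frac12}(1+\tau_-)^{\Sigma_--\frac12}\gg\mathcal C^{-1}(1+T)^{\Sigma-1}$; when $\tau_++\tau_-<T/2$ the exponential $e^{-\pi T/4}$ crushes any polynomial growth in $T$, with the cost $x^ae^{-cx}\leq(a/(ec))^a$ absorbed again into $\mathcal C$. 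The leftover factor $(1+|T_\star|)^{-1}e^{-C_1|T_\star|}$ is always $\leq 1$ and can simply be dropped.

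The main obstacle is purely bookkeeping in this second step: depending on the signs of $\Sigma_\pm-\frac12$, the comparisons between $(1+\tau_\pm)^{\Sigma_\pm-\frac12}$ and $(1+T)^{\Sigma-1}$ run in opposite directions on different parts of parameter space, and one must check that in every configuration the loss can be pushed into $\mathcal C$. Since $\mathcal C$ tolerates doubly-exponential growth in $1+B+\sum|\sigma_j|$, there is ample room for such polynomial losses in the $\sigma$-variables, and no ideas beyond Stirling and this case analysis are required.
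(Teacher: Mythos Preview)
Your overall plan (Stirling, then a comparison of the resulting polynomial-in-$t$ factors, using exponential decay to clean up) is exactly the paper's approach. However, the case analysis you sketch has a real gap: you discard the factor $e^{-C_1|T_\star|}$ coming from $G$ too early, and it is precisely this factor that is needed in your Case~1.

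Concretely, the claim ``$\tau_++\tau_-\ge T/2$ implies $(1+\tau_+)(1+\tau_-)\gtrsim(1+T)^2/4$'' is false: take $\tau_+=T$, $\tau_-=0$. More to the point, your fallback ``trivial bound'' $\bigl((1+T)/(1+\tau_\pm)\bigr)^{|\Sigma_\pm|}\le\mathcal C$ is wrong in exactly this situation, since the ratio $(1+T)/(1+\tau_\pm)$ is unbounded in $T$ while $\mathcal C$ depends only on $B$ and the $\sigma_j$. For a concrete failure, take $\kappa=2$, $\epsilon=(-1,1)$, $\sigma_1$ large, $\sigma_2=0$, $t_1=0$, $t_2=R\to\infty$: then $\tau_-=0$, $\tau_+=T=R$, and the quantity you need to control is $(1+R)^{\Sigma-1}/(1+\tau_-)^{\Sigma_--\frac12}\asymp(1+R)^{\sigma_1-1}$, which is unbounded. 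What saves the day here is that $|T_\star|=|t_1+t_2|=R$ is large, so $e^{-C_1|T_\star|}$ kills the polynomial growth. In general, whenever one of $\tau_\pm$ is small and the other is $\asymp T$, one has $|T_\star|=|T_++T_-|\asymp T$, and the $G$-decay is essential.

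The paper isolates this step as the elementary inequality
\[
\frac{e^{-C_1|x+y|}}{(1+|x|)^{\eta_1}(1+|y|)^{\eta_2}}\ \ll\ \frac{(1+|\eta_1|+|\eta_2|)^{A(|\eta_1|+|\eta_2|)}}{(1+|x|+|y|)^{\eta_1+\eta_2}},
\]
applied with $x=T_+$, $y=T_-$ (so $|x+y|=|T_\star|$, $|x|+|y|=\tau_++\tau_-$). This converts $(1+\tau_+)^{\Sigma_+-\frac12}(1+\tau_-)^{\Sigma_--\frac12}$ into $(1+\tau_++\tau_-)^{\Sigma-1}$ at a cost absorbed into $\mathcal C$. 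Only after this step does one use the remaining exponential $e^{-\frac{\pi}{2}(T-\tau_+-\tau_-)}$ and your case split to pass from $1+\tau_++\tau_-$ to $1+T$. If you reinstate $e^{-C_1|T_\star|}$ and insert this intermediate inequality, your argument goes through.
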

\begin{proof}
 By Stirling's formula (and the reflection's formula for the Gamma function), if the distance of $s=\sigma+it$ from the poles of $\Gamma(s)$ is greater than $\delta$, then we have
%
\est{
\Gamma(s)&\ll \frac1{\delta}(1+A_1|\sigma|)^{|\sigma|}(1+|t|)^{\sigma-\frac12}e^{-\frac{\pi}{2}|t|}, \\
\Gamma(s)^{-1}&\ll (1+A_1|\sigma|)^{|\sigma|}(1+|t|)^{-\sigma+\frac12}e^{\frac{\pi}{2}|t|}, 
}
for some $A_1>0$. It follows that 
\est{
\Psi_{\epsilon,B}(s_1,\dots,s_\kappa )&\ll \frac1{\delta^\kappa }\frac{(1+B+|\sigma_1|+\cdots+|\sigma_\kappa |)^{A_2(1+B+|\sigma_1|+\cdots+|\sigma_\kappa |)}}{(1+|t_1|)^{\frac12-\sigma_1}\cdots (1+|t_\kappa |)^{\frac12-\sigma_\kappa }}\times\\
&\quad\times \frac{e^{-\frac{\pi}2(|t_1|+\cdots+|t_\kappa |-|V_{+;\epsilon}(t_1,\dots,t_\kappa )|-|V_{-;\epsilon}(t_1,\dots,t_\kappa )|)-C_1|t_1+\cdots + t_\kappa |}}{(1+|V_{+;\epsilon}(t_1,\dots,t_\kappa )|)^{V_{+;\epsilon}(\sigma_1,\dots,\sigma_\kappa )-\frac12}(1+|V_{-;\epsilon}(t_1,\dots,t_\kappa )|)^{V_{-;\epsilon}(\sigma_1,\dots,\sigma_\kappa )-\frac12}},\\
}
for some $A_2>0$. 
Now, we have
\est{
\frac{e^{-C_1|x+y|}}{(1+|x|)^{\eta_1}(1+|y|)^{\eta_2}}\ll \frac{(1+|\eta_1|+|\eta_2|)^{A_3(|\eta_1|+|\eta_2|)}}{(1+|x|+|y|)^{\eta_1+\eta_2}},
}
for some $A_3>0$ (depending on $C_1$). 
Thus, 
\est{
& \frac{e^{-\frac{\pi}2(|t_1|+\cdots+|t_\kappa |-|V_{+;\epsilon}(t_1,\dots,t_\kappa )|-|V_{-;\epsilon}(t_1,\dots,t_\kappa )|)-C_1|t_1+\cdots + t_\kappa |}}{(1+|V_{+;\epsilon}(t_1,\dots,t_\kappa )|)^{V_{+;\epsilon}(\sigma_1,\dots,\sigma_\kappa )-\frac12}(1+|V_{-;\epsilon}(t_1,\dots,t_\kappa )|)^{V_{-;\epsilon}(\sigma_1,\dots,\sigma_\kappa )-\frac12}}\\
&\qquad\ll(1+|\sigma_1|+\dots+|\sigma_\kappa |)^{A_4(|\sigma_1|+\dots+|\sigma_\kappa |)} \frac{e^{-\frac{\pi}2(|t_1|+\cdots+|t_\kappa |-|V_{+;\epsilon}(t_1,\dots,t_\kappa )|-|V_{-;\epsilon}(t_1,\dots,t_\kappa )|)}}{(1+|V_{+;\epsilon}(t_1,\dots,t_\kappa )|+|V_{-;\epsilon}(t_1,\dots,t_\kappa )|)^{\sigma_1+\cdots+\sigma_\kappa -1}}\\
&\qquad\ll\frac{(1+|\sigma_1|+\dots+|\sigma_\kappa |)^{A_5(|\sigma_1|+\dots+|\sigma_\kappa |)}}{(1+|t_1|+\cdots+|t_\kappa |)^{\sigma_1+\cdots+\sigma_\kappa -1}},\\
}
and~\eqref{feg} follows.
\end{proof}

\begin{proof}[Proof of Lemma~\ref{sml}]
First, we remark  that the estimate~\eqref{feg} implies the absolute convergence of the integrals on the right hand side of~\eqref{tmf} and justifies the following computations.

We prove the Lemma by induction. First we consider the case $\kappa =2$. From~\eqref{ffor} we have 
\es{\label{fffd}
(x_2+x_3)^{v_1-1}&=(x_2+x_3)^B(x_2+x_3)^{v_1-1-B}\\
&=\sum_{\substack {\nu_2,\nu_3\in\Z_{\geq0},\\ \nu_2+\nu_3=B}}\frac{B!}{\nu_2! \nu_3!}x^{\nu_2}x_3^{\nu_3}\frac1{2\pi i}\int_{(c_{v_3})}\frac{\Gamma(v_3)\Gamma(1+B-v_1-v_3)}{\Gamma(1+B-v_1)x_2^{B+1-v_1-v_3}x_3^{v_3}}dv_3,
}
for $0<c_{v_3}<1+B-\Re(v_1)$. 
Now, by contour integration,
\est{
\frac{\Gamma(1+B-v_1-v_3)}{\Gamma(1+B-v_1)}x_2^{v_1+v_3-B-1}=\frac1{2\pi i}\bigg(\int_{(c_{v_2})}-\int_{(c'_{v_2})}\bigg)\frac{\Gamma(v_2)x_2^{-v_2}}{\Gamma(v_2+v_3)}\frac{G(B+1-v_1-v_2-v_3)}{B+1-v_1-v_2-v_3}dv_2,
}
where $c_{v_2},c'_{v_2}>0$ and $c_{v_2}<-\Re(v_1+{v_3})+B+1<c'_{v_2}$. Inserting this into~\eqref{fffd} we obtain~\eqref{tmf} in the case $\epsilon=(-1,1,1)$.

The case $\epsilon=(-1,1,-1)$ (and thus its permutation $\epsilon=(-1,-1,1)$) follows in the same way from~\eqref{ffor2}.

Now, let $\epsilon=(-1,\pm_2,\dots,\pm_{\kappa +1})\in\{\pm1\}^{\kappa +1}$ with $\kappa \geq2$ and suppose~\eqref{tmf} holds for all $\epsilon'\in\{\pm1\}^{\kappa }$ with $\pm_1'1=-1$. Since $\kappa +1\geq3$ there are two indexes $2\leq i<j\leq \kappa +1$ such that $\pm_i1=\pm_j1$ and without loss of generality we can assume $i=\kappa ,j=\kappa +1$. Then, letting $\epsilon'=(-1,\pm_2,\dots,\pm_\kappa )$, we have
\est{
&(\pm_2x_2\pm_3\cdots\pm_{\kappa +1}x_{\kappa +1}))^{v_1-1}\chi_{\R_{>0}}(\pm_2x_2\pm_3\cdots\pm_{\kappa +1}x_{\kappa +1})=\sum_{\substack {\nu=(\nu_2,\dots,\nu_\kappa )\in\Z_{\geq0}^\kappa ,\\\nu_2+\cdots+\nu_{\kappa +1}=B,\\\nu_{i}=0\text{ if $\pm_i=-1$}}}\frac{B!}{\nu_2!\cdots \nu_\kappa !}\frac1{(2\pi i)^{\kappa -1}}\times\\
&\hspace{2.5cm}\times\Bigg(\int_{(c_{v_2},\dots,c_{v_\kappa })}-\int_{(c'_{v_2},\dots,c'_{v_\kappa })}\Bigg)\frac{\Psi_{\epsilon',B}(v_1,\dots,v_\kappa )}{x_2^{v_2-\nu_2}\cdots x_{\kappa -1}^{v_{\kappa -1}-\nu_{\kappa -1}} }(x_\kappa +x_{\kappa +1})^{-v_\kappa +\nu_\kappa }\,dv_2\cdots dv_\kappa \\
}
where $c_{v_2},\dots,c_{v_\kappa },c'_{v_2},\dots,c'_{v_\kappa }>0$ satisfy~\eqref{cfml}. Then, we expand the binomial $(x_{\kappa }+x_{\kappa +1})^{\nu_\kappa }$ and apply~\eqref{ffor} to $(x_{\kappa }+x_{\kappa +1})^{-v_\kappa }$. We obtain 
\est{&(\pm_2x_2\pm_3\cdots\pm_{\kappa +1}x_{\kappa +1}))^{v_1-1}\chi_{\R_{>0}}(\pm_2x_2\pm_3\cdots\pm_{\kappa +1}x_{\kappa +1})=\\
&\hspace{2cm}=\sum_{\substack {\nu=(\nu_2,\dots,\nu_{\kappa +1})\in\Z_{\geq0}^\kappa ,\\\nu_2+\cdots+\nu_{\kappa +1}=B,\\\nu_{i}=0\text{ if $\pm_i=-1$}}}\frac{B!}{\nu_2!\cdots \nu_{\kappa +1}!}\frac1{(2\pi i)^{\kappa }}\Bigg(\int_{(c_{v_2},\dots,c_{v_{\kappa +1}})}-\int_{(c'_{v_2},\dots,c'_{v_{\kappa +1}})}\Bigg)\times\\
&\hspace{2cm}\quad\times\frac{\Psi_{\epsilon',B}(v_1,\dots,v_\kappa )}{x_2^{v_2-\nu_2}\cdots x_{\kappa -1}^{v_{\kappa -1}-\nu_{\kappa -1}}x_\kappa ^{v_{\kappa }-v_{\kappa +1}-\nu_{\kappa }}x_{\kappa +1}^{v_{\kappa +1}-\nu_{\kappa +1}}}\frac{\Gamma(v_{\kappa +1})\Gamma(v_\kappa -v_{\kappa +1})}{\Gamma(v_{\kappa })}dv_2\cdots dv_\kappa ,\\
}
where $c_{v_{\kappa +1}},c'_{v_{\kappa +1}}>0$ are such that $0<c_{v_{\kappa +1}}<c_{v_{\kappa }}$. We make the change of variables $v_{\kappa }\rightarrow v_{\kappa }+v_{\kappa +1}$ and the lemma follows.
\end{proof}

\begin{lemma}\label{rcmpv}
Let $\kappa \geq2$ and let  $\epsilon^*=(\pm_11,\cdots,\pm_\kappa 1,\pm_*1)\in \{\pm1\}^{\kappa +1}$, with $\pm_11=-1$.
For $B\geq0$, let
\est{
\Psi_{\epsilon^*,B}'(v_1,\dots,v_{\kappa })&:=\frac{\Gamma(B+1-v_1-\cdots-v_\kappa )\Gamma(v_1)\cdots\Gamma(v_\kappa )}{\Gamma(V_{\mp_*;\epsilon}(v_1,\dots,v_{\kappa }))\Gamma(B+1-V_{\mp_*;\epsilon}(v_1,\dots,v_{\kappa }))},\\
}
where $V_{\pm;\epsilon}$ is defined in~\eqref{dfgfs}.

Let $\mathcal F(v_0,\dots,v_\kappa )$ be analytic on $\pg{(v_0,\dots,v_\kappa )\in\C^{\kappa +1}\mid 0<\Re(v_0)<B+1}$ and assume that for $0<\Re(v_0)<B+1$ and any $A>0$ one has that $\mathcal F$ satisfies
\est{
\mathcal F(v_0,\dots,v_\kappa )\ll \prod_{i=2}^\kappa (1+|v_{i}|)^{-A}
}
where the implicit constant may depend on $A$, $v_1$ and $\Re(v_0)$.
Then for any $v_1\in\C$ and $c_{v_2},\dots,c_{v_\kappa }>0$ satisfying $0<\Re(v_1)+c_{v_2}+\cdots+c_{v_\kappa }<1$ we have
\es{\label{hgs}
&\sum_{\substack {\nu}}\frac{B!}{\nu_*!\nu_2!\cdots \nu_\kappa !}
\int_{(c_{v_2},\dots,c_{v_\kappa })}\Psi_{\epsilon,B}'(v_1,\dots,v_\kappa )\times{}\\
&\hspace{3cm}\times\mathcal F(B+1-\nu_*-v_1-\cdots-v_\kappa ,v_1,v_2-\nu_2.\dots, v_\kappa -\nu_\kappa )\,dv_2\cdots dv_\kappa \\
&\hspace{1.5cm}=\int_{(c_{v_2},\dots,c_{v_\kappa })}\Psi_{\epsilon,0}'(v_1,\dots,v_{\kappa })\mathcal F(1-v_1-\cdots-v_\kappa ,v_1,\dots,v_\kappa )\,dv_2\cdots dv_\kappa,
}
where the sum on the left is taken over $\nu=(\nu_2,\dots,\nu_\kappa ,\nu_*)\in\Z_{\geq0}^\kappa $ satisfying
\est{
\nu_2+\cdots+\nu_\kappa +\nu_*=B,\qquad \nu_{i}=0\text{ if $\pm_i=-1$ or $i\in J$},\qquad \nu_{*}=0\text{ if $\pm_*=-1$}.
}\end{lemma}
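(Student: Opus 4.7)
The plan is to reduce the lemma to a combinatorial identity for the functions $\Psi'_{\epsilon^*, B}$, then conclude via Fubini and a contour deformation. First, I would establish the key identity
\begin{equation*}
\sum_{\nu} \frac{B!}{\nu_*!\, \nu_2! \cdots \nu_\kappa!}\, \Psi'_{\epsilon^*, B}(v_1, v_2 + \nu_2, \dots, v_\kappa + \nu_\kappa) = \Psi'_{\epsilon^*, 0}(v_1, v_2, \dots, v_\kappa),
\end{equation*}
valid as meromorphic functions in $v_1, \dots, v_\kappa$. To see this (say for $\pm_*=+1$; the case $\pm_*=-1$ is analogous), observe that $V_{\mp_*;\epsilon}=V_{-;\epsilon}$ is unaffected by the shifts, since only indices $i$ with $\pm_i=+1$ admit nonzero $\nu_i$. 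Writing the $\Gamma$-factors as rising Pochhammer symbols, namely $\Gamma(\nu_*+1-S)=\Gamma(1-S)(1-S)_{\nu_*}$ and $\Gamma(v_i+\nu_i)=\Gamma(v_i)(v_i)_{\nu_i}$ with $S:=v_1+\cdots+v_\kappa$, the identity reduces to
\begin{equation*}
\sum_{\nu} \binom{B}{\nu_*,\nu_2,\dots,\nu_\kappa}(1-S)_{\nu_*}\!\!\prod_{\substack{i\ge 2,\\ \pm_i=+1}}\!\!(v_i)_{\nu_i} = (1-V_{-;\epsilon})_B,
\end{equation*}
which follows from the multinomial expansion of the rising factorial combined with the algebraic identity $1-S+V_{+;\epsilon}=1-V_{-;\epsilon}$.

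Next, I would apply this to the RHS of~\eqref{hgs}: expand $\Psi'_{\epsilon^*,0}$ as the $\nu$-sum, swap the sum with the integral (justified by the decay bound on $\mathcal F$ and Lemma~\ref{bfp}), and in each summand perform the change of variables $w_i:=v_i+\nu_i$ for $i=2,\dots,\kappa$. The integration contour shifts from $(c_{v_i})$ to $(c_{v_i}+\nu_i)$, and the first argument of $\mathcal F$ becomes $B+1-\nu_*-v_1-\sum w_i$ (using $\nu_* + \sum_{i\ge 2}\nu_i = B$) while the remaining arguments become $w_i-\nu_i$. The resulting integrand agrees exactly with the LHS integrand of~\eqref{hgs}, so all that remains is to shift each contour back from $(c_{v_i}+\nu_i)$ to $(c_{v_i})$, one variable at a time, to recover the LHS.

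The hard part will be verifying the legitimacy of this final contour shift; specifically, one must check that the integrand is holomorphic throughout each strip $c_{v_i}\le\Re(w_i)\le c_{v_i}+\nu_i$. The poles of $\Psi'_{\epsilon^*, B}(v_1, w_2, \dots, w_\kappa)$ in the variable $w_i$ come either from $\Gamma(w_i)$, which lie at $w_i\in\Z_{\le 0}$ and are hence strictly to the left of the strip since $c_{v_i}>0$, or from $\Gamma(B+1-v_1-\sum w_j)$, whose poles require $\Re(w_i)\ge B+1-\Re(v_1)-\sum_{j\ne i}\Re(w_j)$; using the hypothesis $\Re(v_1)+\sum c_{v_j}<1$, this lower bound exceeds $c_{v_i}+\nu_i$, so no such pole lies in the strip. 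For $\mathcal F$, iterating the shift one variable at a time and using $0<\Re(v_1)+\sum c_{v_j}<1$ together with $\nu_*+\sum_{i\ge 2}\nu_i=B$, a direct computation shows that the first argument $v_0=B+1-\nu_*-v_1-\sum w_j$ stays in the open interval $(0,B+1)$ throughout the process, so that $\mathcal F$ remains inside its analyticity strip. Hence no poles are crossed and the shift is legal, completing the proof.
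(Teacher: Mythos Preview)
Your proof is correct and follows essentially the same approach as the paper, merely run in the reverse direction: the paper starts from the left-hand side of~\eqref{hgs}, performs the change of variables $v_i\mapsto v_i+\nu_i$, shifts the contours back to $c_{v_i}$, swaps sum and integral, and then collapses the $\nu$-sum via the generalized Beta identity $\sum_{r_1+\cdots+r_m=r}\binom{r}{r_1,\dots,r_m}\prod_j\Gamma(s_j+r_j)/\Gamma(r+\sum s_j)=\prod_j\Gamma(s_j)/\Gamma(\sum s_j)$, which is exactly your multinomial rising-factorial identity rewritten in terms of Gamma functions. Your contour-shift justification is in fact more explicit than the paper's, which simply asserts that no poles are crossed.
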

\begin{proof}
Making the change of variables $v_i\rightarrow v_i+\nu_i$, for $i=2,\dots,\kappa $, moving back  the lines of integration to $c_{v_i}$ (as we can do without crossing any pole), and switching the order of summation and integration, we see that the left hand side of~\eqref{hgs} is equal to
\est{
&\int_{(c_{v_2},\dots,c_{v_\kappa })}\sum_{\substack {\nu}}\frac{B!}{\nu_*!\nu_2!\cdots \nu_\kappa !} \Psi_{\epsilon}'(v_1,v_2+\nu_2,\dots,v_\kappa +\nu_\kappa )\\
&\hspace{3cm}\times\mathcal F(1-v_1-\dots-v_\kappa ,v_1,\dots, v_\kappa )\,dv_2\cdots dv_\kappa .\\
}

Now, the identity $\B(s_1+1,s_2)+\B(s_1,s_2+1)=\B(s_1,s_2)$, satisfied by the Beta function $\B(s_1,s_2):=\Gamma(s_1)\Gamma(s_2)\Gamma(s_1+s_2)^{-1}$, can be generalized to 
\est{
\sum_{\substack {(r_1,\dots,r_m)\in\Z_{\geq0}^m,\\ r_1+\cdots+r_m=r }}\frac{r!}{r_1!\cdots r_m!}\frac{\Gamma(s_1+r_1)\cdots\Gamma(s_m+r_m)}{\Gamma(r+s_1+\cdots+s_m)}=\frac{\Gamma(s_1)\cdots\Gamma(s_m)}{\Gamma(s_1+\cdots+s_m)},
}
for $m,r\geq1$, $s_1,\dots,s_m\in\C$. 
Thus, we have
\est{
&\sum_{\substack {\nu=(\nu_2,\dots,\nu_\kappa ,\nu_*)\in\Z_{\geq0}^\kappa ,\\\nu_2+\cdots+\nu_\kappa +\nu_*=B,\\\nu_{i}=0\text{ if $\pm_i=-1$},\\\nu_{*}=0\text{ if $\pm_*=-1$}}}
\frac{B!}{\nu_*!\nu_2!\cdots \nu_\kappa !}\Psi_{\epsilon,B}(v_1,v_2+\nu_2\dots,v_\kappa +\nu_\kappa )
=\Psi_{\epsilon,0}'(v_1,\dots,v_{\kappa })
}
and the Lemma follows.
\end{proof}

\addresses{Dipartimento di Matematica, Universit\`a di Genova; via Dodecaneso 35; 16146 Genova; Italy. }

\end{document}